\documentclass{amsart}
\usepackage[all]{xy}
\usepackage{amsmath,amsfonts,amssymb,amsthm,hyperref}
\usepackage{mathrsfs}
\usepackage{color}

\allowdisplaybreaks

\usepackage[margin=3.7cm]{geometry}
\usepackage{hyperref}
\usepackage{cleveref}
\usepackage{setspace}
\usepackage{enumitem}
\usepackage{verbatim}

\newtheorem{theorem}{\bf Theorem}[section]
\newtheorem{lemma}[theorem]{\bf Lemma}
\newtheorem{proposition}[theorem]{\bf Proposition}
\newtheorem{definition}[theorem]{\bf Definition}
\newtheorem{corollary}[theorem]{\bf Corollary}
\newtheorem{remark}[theorem]{\bf Remark}
\newtheorem{example}[theorem]{\bf Example}
\numberwithin{equation}{section}

\newcommand{\omin}{\otimes^{\min}}
\newcommand{\la}{\langle}
\newcommand{\ra}{\rangle}
\newcommand{\ot}{\otimes}
\newcommand{\Ind}{\mathrm{Ind}}
\newcommand{\p}{\mathrm{p}}
\newcommand{\cp}{\mathrm{cp}}

\newcommand{\IMS}{\mathrm{IMS}}
\newcommand{\C}{\mathbb{C}}

\newcommand{\mF}{\mathcal{F}}
\newcommand{\mH}{\mathcal{H}}
\newcommand{\mK}{\mathcal{K}}
\newcommand{\mL}{\mathcal{L}}

\newcommand{\mN}{\mathcal{N}}
\newcommand{\mM}{\mathcal{M}}
\newcommand{\mP}{\mathcal{P}}
\newcommand{\mQ}{\mathcal{Q}}

\newcommand{\mX}{\mathcal{X}}
\newcommand{\mY}{\mathcal{Y}}
\newcommand{\mZ}{\mathcal{Z}}

\newcommand{\mA}{\mathcal{A}}
\newcommand{\mB}{\mathcal{B}}
\newcommand{\mC}{\mathcal{C}}
\newcommand{\mD}{\mathcal{D}}

\newcommand{\mcal}{\mathcal}
\begin{document}

\title[Angles between intermediate operator subalgebras]{Angles between
 intermediate operator subalgebras}

\author[V P Gupta]{Ved Prakash Gupta}
\author[D Sharma]{Deepika Sharma}

\address{School of Physical Sciences, Jawaharlal Nehru University, New
  Delhi, INDIA}

\email{vedgupta@jnu.ac.in}
\email{sharmadeepikaq@gmail.com}

\subjclass[2020]{47L40, 46L06, 47L65 }

\keywords{ Inclusions of operator algebras,  intermediate subalgebras,
  finite-index conditional expectations, basic constructions,
  interior angle, crossed product, tensor product}
\date{}

\begin{abstract}
Motivated by \cite{BDLR} and \cite{BG2}, the notions of interior and
exterior angles between a pair of compatible intermediate
$W^*$-subalgebras of an inclusion of $W^*$-algebras with a normal
conditional expectation with finite probabilistic index are
introduced. This is then employed effectively to define the 
interior angle between a pair of compatible intermediate
$C^*$-subalgebras of an inclusion of non-unital $C^*$-algebras with a
conditional expectation with finite Watatani index. It is also shown
that the interior angle is stable under the minimal tensor product of
unital $C^*$-algebras.
\end{abstract}
\maketitle

\section{Introduction}
Over the years, based on some significant observations in various
categories - like those of groups, von Neumann algebras and
$C^*$-algebras - it is a well utilized strategy in mathematics that
the study of sub-objects of a given mathematical object provides a
good understanding of the structure and various properties of the
ambient object. In fact, based on this methodology, some noteworthy
classification results have been achieved in the categories of groups,
von Neumann algebras and $C^*$-algebras.

There are various tools and techniques in the world of operator
algebras which assist in the study of subalgebras of operator
algebras. Very recently, the following concepts were added to this
list:
\begin{enumerate}
  \item The notions of interior and exterior angles between
    intermediate subfactors of any inclusion of $II_1$-factors with
    finite Jones index, by Bakshi et al in \cite{BDLR}. (The existence
    of a unique trace on a $II_1$-factor and Jones' notion of `basic
    construction' played a fundamental role in the development and
    applications of these concepts.)
\item The notions of interior and exterior angles between intermediate
  $C^*$-subalgebras of any inclusion of unital $C^*$-algebras with
  finite Watatani index, by Bakshi and the first named author in
  \cite{BG2}. (Watatani's theories of finite-index conditional
  expectations and reduced $C^*$-basic construction (based on the
  theory of Hilbert $C^*$-modules) were instrumental for these notions 
  of angles.)
\end{enumerate}

As applications, these notions of angles were used effectively to
obtain bounds for the cardinalities of lattices of intermediate
subalgebras of irreducible finite-index inclusions of (simple)
operator algebras mentioned as above, in \cite{BDLR} and \cite{BG2}
itself. These bounds improved earlier known bounds by Longo
(\cite{Longo}), thereby answering a question asked by him; and, also
provided a bound for the cardinality of intermediate subfactors of a
finite-index irreducible inclusion of type $III$-factors. (Some
further improvements were achieved in \cite{BGJ}.)

It was then natural to ask whether these notions of angles could be
extended to intermediate subalgebras of more general (finite-index)
inclusions of $W^*$-algebras and of non-unital $C^*$-algebras. This
article essentially addresses these questions and, interestingly, it
turns out that the language of Hilbert $W^*$-algebras developed by
Paschke plays a key role in answering both questions effectively.

Here is a brief outline of the flow of this article.

This being our first serious interaction with the theory of Hilbert
$W^*$-modules, we've included a relatively longish portion on facts
and concepts related to Hilbert $W^*$-modules in \Cref{prelims}.
Another major component of \Cref{prelims} involves a brief review of
the notion of conditional expectations with finite Watatani index in
unital as well as non-unital $C^*$-algebras, which was initiated by
Watatani (\cite{watatani}) for unital $C^*$-algebras and later
generalized to non-unital $C^*$-algebras by Izumi (\cite{izumi}). (We
anticipate that the exploratory survey presented in \Cref{prelims} makes
this article more self-contained and that it will benefit some
uninitiated readers.) Then, following \cite{watatani} and
depending heavily on \cite{Pas} and \cite{BDH},
\Cref{W*-basic-construction} reviews some basic aspects of {\em
  reduced $W^*$-basic construction} of an inclusion of von Neumann
algebras with a normal conditional expectation with finite
probabilistic index, wherein the notion of a {\em generalized
  quasi-basis} for a conditional expectation in $W^*$-algebras has
also been introduced (and discussed briefly).

Thereafter, motivated by \cite{BDLR} and \cite{BG2}, the notions of
interior and exterior angles between a pair of compatible intermediate
$W^*$-subalgebras of an inclusion of $W^*$-algebras with a normal
conditional expectation with finite probabilistic index is introduced
in \Cref{W*-angles}. Some expected generalities are also discussed in
the same section. Finally, in \Cref{non-unital-angle}, employing the
notion of the interior angle between intermediate $W^*$-algebras,
defined in \Cref{W*-angles}, and adhering to Izumi's (\cite{izumi})
strategy of working with enveloping $W^*$-algebras of non-unital
$C^*$-algebras, the notion of interior angle between compatible
intermediate $C^*$-subalgebras of an inclusion of (non-unital)
$C^*$-algebras with a conditional expectation with finite Watatani
index is introduced. The article concludes with the observation that
the interior angle is stable under the minimal tensor product of
unital $C^*$-algebras.

\section{Preliminaries}\label{prelims}

 \subsection{Hilbert $C^*$- and $W^*$-modules}\label{module-basics}

 In this section, we just recall a few facts relevant to this
 article. For more on Hilbert $C^*$- and $W^*$-modules, see, for
 instance, \cite{BM, Khos, Lance, Pas, Rieffel}. (All modules
 considered in this article will be right modules.)

Let us first fix notations for the space of intertwiners and for the
space of adjointable maps between pre-Hilbert $C^*$-modules:

      For any two pre-Hilbert $C^*$-modules $\mX$ and $\mY$ over a
      $C^*$-algebra $\mA$, let  $\mB(\mX, \mY)$ denote the space of
      bounded linear maps from $\mX$ into $\mY$, let
\[
\mB_\mA(\mX, \mY):=\{ T \in \mB(\mX,   \mY) :\, T
\text{ is  } \mA \text{-linear}\},
\]
and, let $\mL_\mA(\mX, \mY)$ denote the space of adjointable maps from
$\mX$ into $\mY$, i.e.,
\begin{eqnarray*}
\mL_\mA(\mX, \mY)& := & \{ T : \mX\to \mY : T \text{ is linear and
  there exists a linear map } T^*:\mY \to \mX\\ & & \qquad \text{ such
  that } \la T(x), y\ra_{_\mA} = \la x, T^*(y)\ra_{_\mA} \text{ for
  all } x\in \mX, y \in \mY\}.
\end{eqnarray*}
If $\mX$ and $\mY$ are Hilbert $C^*$-modules over $\mA$, then
$\mL_\mA(\mX, \mY)$ is a closed subspace of the Banach space
$\mB_\mA(\mX, \mY)$ (with the operator norm) - see, for instance,
\cite{Lance, BM}.  It is easily seen that $T^* \in \mL_\mA(\mY, \mX)$,
for every $T \in \mL_\mA(\mX, \mY)$.  Also, for the sake of brevity,
let $\mB_\mA(\mX):= \mB_\mA(\mX, \mX)$ and $\mL_\mA(\mX):=
\mL_\mA(\mX, \mX)$. (Paschke (on page 447 of \cite{Pas}) gave an
example of a Hilbert $C^*$-module $\mX$ over a $C^*$-algebra $\mA$ for
which $\mL_\mA (\mX) \subsetneq \mB_\mA(\mX)$.)

The following fact is elementary - see, for instance, \cite[$\S
  8.1.7$]{BM} - and it provides an appropriate operator algebra, using
which Watatani (in \cite{watatani}) developed the theory of basic
construction for finite-index inclusions of unital $C^*$-algebras. 

\begin{proposition}\label{L-A-X-C-star-algebra}
Let  $\mX$ be a Hilbert $C^*$-module  over a $C^*$-algebra $\mA$. Then,  \(
\mL_{\mA}(\mX)\) is a
unital $C^*$-algebra.
\end{proposition}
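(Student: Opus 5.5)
The plan is to verify directly that $\mL_\mA(\mX)$ is a $*$-subalgebra of $\mB(\mX)$ that is closed in the operator norm, contains the identity, and satisfies the $C^*$-identity. This follows the standard route in \cite[$\S 8.1.7$]{BM} and \cite{Lance}.

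\emph{Step 1: algebraic structure.} First I will record that every $T\in\mL_\mA(\mX)$ is automatically $\mA$-linear and that its adjoint is unique. Both facts come from the definiteness of the inner product, since $\la x,x\ra_{_\mA}=0$ forces $x=0$. For $\mA$-linearity, expand
\[
\la T(xa)-T(x)a,\,T(xa)-T(x)a\ra_{_\mA}
\]
by moving $T$ across to $T^*$; every term cancels, so the expression vanishes. Uniqueness of $T^*$ is obtained the same way. It then follows at once that $\mL_\mA(\mX)$ is a complex vector space closed under composition, with $(ST)^*=T^*S^*$ and $(\lambda T)^*=\bar\lambda T^*$. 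The map $T\mapsto T^*$ is an involution satisfying $T^{**}=T$, and $\mathrm{id}_\mX$ is adjointable with $\mathrm{id}_\mX^*=\mathrm{id}_\mX$.

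\emph{Step 2: boundedness.} Next I show that each adjointable $T$ is bounded. Take the family $\{\la T x, y\ra_{_\mA} : \|y\|\le 1\}$, viewed as functions of $x$; each equals $\la x,T^*y\ra_{_\mA}$ and is therefore bounded in $x$. For each fixed $x$ the family is pointwise bounded by $\|Tx\|$. The uniform boundedness principle then bounds $\|T x\|=\sup_{\|y\|\le1}\|\la Tx,y\ra_{_\mA}\|$ uniformly over $\|x\|\le 1$.

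This step is the main obstacle, because it is the only place where completeness of $\mX$ is genuinely needed. An alternative is the closed graph theorem, applied using the fact that the inner product separates points.

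\emph{Step 3: norm closedness.} Let $T_n\to T$ in $\mB(\mX)$ with $T_n\in\mL_\mA(\mX)$. I will show $\|T^*\|=\|T\|$, which implies that $(T_n^*)$ is Cauchy and so converges to some $S$. Passing to the limit in $\la T_nx,y\ra_{_\mA}=\la x,T_n^*y\ra_{_\mA}$ gives $S=T^*$. Hence $\mL_\mA(\mX)$ is a closed subspace of the Banach space $\mB_\mA(\mX)$, as already noted in the paper, and is therefore complete.

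\emph{Step 4: the $C^*$-identity.} Using the Cauchy--Schwarz inequality $\|\la x,y\ra_{_\mA}\|\le\|x\|\|y\|$ for Hilbert $C^*$-modules, I get
\[
\|Tx\|^2=\|\la x,T^*Tx\ra_{_\mA}\|\le\|x\|^2\,\|T^*T\|.
\]
Hence $\|T\|^2\le\|T^*T\|\le\|T^*\|\|T\|$. This gives $\|T\|\le\|T^*\|$, and by symmetry equality holds; this is the fact used in Step 3. It also yields $\|T^*T\|=\|T\|^2$. Therefore $\mL_\mA(\mX)$ is a unital $C^*$-algebra with unit $\mathrm{id}_\mX$.
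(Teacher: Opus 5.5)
Your argument is correct and is the standard textbook proof; the paper does not reproduce it but simply cites it as elementary from \cite[$\S 8.1.7$]{BM}. One small inaccuracy is in your commentary rather than the proof: completeness of $\mX$ is also used in Step 3, where the Cauchy sequence $(T_n^*)$ converges only because $\mB(\mX)$ is a Banach space, so Step 2 is not the only place completeness enters.
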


\begin{remark}\label{compact-operators}
  For any Hilbert $C^*$-module $\mX$ over a $C^*$-algebra $\mA$, there
  are some useful  so-called `finite-rank' elements in $\mL_\mA(\mX)$ given by
  $\mX$ itself.

  For $x, y \in \mX$, consider the (so-called `rank-one') operator
  $\theta_{x,y}: \mX \to \mX $ given by  \( \theta_{x,y}(z) = x
  \langle y, z \rangle_{_\mA}, z \in \mX \).  Clearly, $\theta_{x,y}
  \in \mL_{\mA}(\mX)$, $\theta_{x,y}\circ \theta_{w,z} = \theta_{x \la
    y, w\ra_{_\mA}, z}$ and $(\theta_{x,y})^* = \theta_{y,x}$ for all
  $x, y, w, z \in \mX$. Further, for any $T \in \mathcal{L}_\mA(\mX)$
  and any pair $x, y \in \mX$, $\theta_{x,y}\circ T = \theta_{x,T^*(y)}$
  and $ T \circ \theta_{x,y} = \theta_{T(x),y}$. Thus,
  $\mathrm{span}\{\theta_{x,y} : x,y \in \mX\}$ is a two-sided ideal in
  $\mL_\mA(\mX)$ and its closure $\mK_{\mA}(\mX):=
  \overline{\mathrm{span}}\{\theta_{x,y}: x, y \in\mX\}$ is called the
  ideal of ``compact operators'' on the Hilbert $C^*$-module $\mX$.
\end{remark}

For any set $I$, let $\mF(I)$ denote the set of finite subsets of
$I$. Clearly, $\mF(I)$ is a directed set with respect to set
inclusion. The following useful observation appears, indirectly, in
\cite[Pages 457-458]{Pas}. We shall need it ahead.

\begin{lemma}\cite{Pas}\label{paschke-convergence}
Let $\mX$ be a pre-Hilbert $C^*$-module over a $W^*$-algebra $\mM$ and
$\{x_i: i \in I\}$ and $\{y_i: i \in I\}$be two family in $\mX$. If
the net $\left\{ \sum_{i \in F} \la x_i, y_i \ra_\mM : F \in\mF(I) \right\}$ is
norm-bounded in $\mM$, then $\sum_i \la x_i, y_i\ra_\mM$ converges in
the $\sigma$-weak topology in $\mM$.
  \end{lemma}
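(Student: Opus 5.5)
The plan is to argue functional-by-functional and then use the duality $\mM = (\mM_*)^*$. The module structure plays no role beyond producing the elements $a_i := \la x_i, y_i\ra_\mM \in \mM$. The argument works for any family $\{a_i\}_{i\in I}$ in a $W^*$-algebra $\mM$ whose finite partial sums $S_F := \sum_{i\in F} a_i$ satisfy $\|S_F\| \le C$ for all $F \in \mF(I)$. The key point is that the hypothesis bounds the sums over \emph{all} finite subsets, not just an increasing sequence of them. This is what turns mere boundedness into absolute summability after we pair with a normal functional.

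\emph{Step 1 (scalar absolute summability).} Fix $\varphi \in \mM_*$ and set $c_i := \varphi(a_i) \in \C$. Then $\bigl|\sum_{i\in F} c_i\bigr| = |\varphi(S_F)| \le C\|\varphi\|$ for every finite $F$. Split $c_i$ into real and imaginary parts and then into positive and negative parts. Choosing $F$ to consist only of indices with, say, $\mathrm{Re}\, c_i \ge 0$ gives $\sum_{i\in F}(\mathrm{Re}\, c_i)^+ \le C\|\varphi\|$. Treating the other three pieces the same way yields
\[
\sum_{i\in I} |c_i| \le 4C\|\varphi\|.
\]
Consequently the net $\{\varphi(S_F)\}_{F\in\mF(I)}$ converges to $\psi(\varphi) := \sum_i \varphi(a_i)$.

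\emph{Step 2 (identifying the limit in $\mM$).} The map $\psi : \mM_* \to \C$ is clearly linear, and by Step 1 it satisfies $|\psi(\varphi)| \le 4C\|\varphi\|$. Hence $\psi$ is a bounded functional on the predual. Since $\mM \cong (\mM_*)^*$ (Sakai), there is a unique $a \in \mM$ with $\varphi(a) = \psi(\varphi)$ for all $\varphi \in \mM_*$. Moreover $\|a\| \le 4C$; in fact $\|a\| \le C$, because $a$ is a $\sigma$-weak limit of elements of the ball of radius $C$, which is $\sigma$-weakly closed.

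\emph{Step 3 (convergence).} By construction $\varphi(S_F) \to \varphi(a)$ for every $\varphi \in \mM_*$. This is precisely convergence of the net $\{S_F\}$ to $a$ in the $\sigma$-weak topology, i.e.\ $\sum_i \la x_i, y_i\ra_\mM = a$ $\sigma$-weakly.

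There is no serious obstacle. The only point needing care is Step 1: one must use that $F$ ranges over arbitrary finite subsets, so that subsets can be selected according to the sign pattern of $\mathrm{Re}\,\varphi(a_i)$ and $\mathrm{Im}\,\varphi(a_i)$. One must also remember that the $\sigma$-weak topology on $\mM$ is the weak$^*$ topology coming from $\mM_*$, which is what Step 2 relies on.
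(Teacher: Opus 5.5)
Your proof is correct. The paper does not prove this lemma itself: it attributes it to the discussion on pp.~457--458 of Paschke. The only place the lemma is used is the proof that an orthonormal basis gives a generalized quasi-basis. There the diagonal sums $\sum_i\la x_i,x_i\ra_\mM$ and $\sum_i\la y_i,y_i\ra_\mM$ are bounded, and the bound on the mixed sums is obtained from Cauchy--Schwarz.

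In that situation the standard route is order-theoretic. Polarization writes each mixed partial sum as a combination of the four partial sums $\sum_{i\in F}\la \omega x_i+y_i,\,\omega x_i+y_i\ra_\mM$ with $\omega^4=1$. Each of these is a norm-bounded increasing net of positive elements, so it converges ($\sigma$-strongly) to its supremum.

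Your argument is different and more general. You pair with $\varphi\in\mM_*$ and exploit that $F$ ranges over all finite subsets, which gives absolute summability of $\{\varphi(a_i)\}$; then $\mM=(\mM_*)^*$ produces the limit. It uses neither positivity nor the module structure. So it shows that any family in a dual Banach space with uniformly bounded finite partial sums is weak$^*$-summable. In particular it proves the lemma under exactly its stated hypothesis, where only the mixed sums are controlled. There polarization is unavailable: $y_i=0$ with arbitrary $x_i$ satisfies the hypothesis while the diagonal sums may be unbounded.

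The order-theoretic route needs stronger hypotheses. In exchange it gives a stronger mode of convergence and identifies the diagonal limits as suprema, which is convenient for order estimates.
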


\subsubsection{Self-dual modules} 
Note that, given any pre-Hilbert $C^*$-module $\mX$ over a $C^*$-algebra
$\mA$, for every $z\in \mX$, the map $\hat{z} : \mX \to \mA$ given by
$\hat{z}(x) = \la z, x\ra_\mA$, $x \in \mX$ is a bounded $\mA$-module
map. 
\begin{definition}\cite{Pas}
\begin{enumerate}
    \item A pre-Hilbert $C^*$-module $\mX$ over a $C^*$-algebra $\mA$
      is said to be self-dual if, for every bounded $\mA$-module map
      $T : \mX \to \mA$, there exists a $z \in \mX$ such that $T =
      \hat{z}$.\\  (It turns out that every self-dual pre-Hilbert
      $C^*$-module is a Hilbert $C^*$-module - see (\cite[$\S
        3$]{Pas}).)
      
    \item A  Hilbert $C^*$-module $\mX$ over a $W^*$-algebra $\mM$ is
  said to be a  Hilbert $W^*$-module (over $\mM$) if $\mX$ is  self-dual.
 \end{enumerate}
\end{definition}

\begin{theorem}\cite{Pas, BM}\label{self-dual-facts}
 Let $\mX$ be a Hilbert $W^*$-module over a $W^*$-algebra $\mM$. Then,
 the following hold: \begin{enumerate}

 \item $\mathcal{L}_\mM(\mX) = \mB_\mM(\mX)$. \hfill (\cite[Cor.   3.5]{Pas})
\item $\mX$ is a dual space. \hfill (\cite[Prop. 3.8]{Pas})

\item A bounded net $\{x_\alpha\}$ in $\mX$ converges to some $x \in
     \mX$ in the $w^*$-topology (induced on $\mX$, by the duality
     mentioned in Item (2)) if and only if $\varphi(\la x_\alpha, y
     \ra_{_\mM}) \to \varphi(\la x, y \ra_{_\mM})$ for all $\varphi
     \in \mM_*$ and $y \in \mX$. \hfill (\cite[Rem. 3.9]{Pas})
 \item $\mathcal{L}_\mM(\mX)$ is a $W^*$-algebra.\hfill
   (\cite[Prop. 3.10]{Pas})
 \item A bounded net $\{T_\lambda\}$ in $\mL_\mM(\mX)$ converges
     $\sigma$-weakly to an element $T$ in $\mL_\mM(\mX)$ if and only if
     $T_\lambda(x) \stackrel{w^*}{\rightarrow} T(x)$ in $\mX$ for
     every $x \in \mX$.\hfill (\cite[Cor. 8.5.5]{BM})
\end{enumerate}
\end{theorem}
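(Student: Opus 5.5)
This theorem collects five facts from Paschke and Blecher--Le Merdy. The plan is to prove them in the order listed, since each item feeds the next. The common tool is self-duality: it converts bounded module maps into vectors, and through that identification we transport $w^*$-structure from $\mM$ to $\mX$ and then to $\mL_\mM(\mX)$.

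\emph{Item (1).} Let $T\in\mB_\mM(\mX)$. For each $y\in\mX$, the map $x\mapsto \la y, T(x)\ra_{_\mM}$ is a bounded $\mM$-module map from $\mX$ to $\mM$. By self-duality there is a unique $z_y\in\mX$ with $\hat{z}_y(x)=\la y,T(x)\ra_{_\mM}$. Set $T^*(y):=z_y$. We then check two things:
\begin{itemize}
\item Linearity of $T^*$ comes from uniqueness, which holds because $\la z,\cdot\ra_{_\mM}=0$ forces $z=0$.
\item The relation $\la T(x),y\ra_{_\mM}=\la x,T^*(y)\ra_{_\mM}$ is the definition read through the adjoint of the inner product.
\end{itemize}
Hence $T\in\mL_\mM(\mX)$.

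\emph{Items (2) and (3).} Consider the set $\Delta:=\mX\times \mM_*$ and the linear span of the functionals $\omega_{y,\varphi}(x):=\varphi(\la y,x\ra_{_\mM})$. Let $\mX_*$ be the norm-closed span of these functionals inside $\mX^*$. We aim to show that the canonical map $\mX\to(\mX_*)^*$ is an isometric surjection.
\begin{itemize}
\item \emph{Isometry.} This uses $\|x\|^2=\|\la x,x\ra\|=\sup_\varphi|\varphi(\la x,x\ra)|$ over normal states $\varphi$, together with $\|\omega_{x,\varphi}\|\le\|x\|$.
\item \emph{Surjectivity.} This is the main obstacle. Take $f\in(\mX_*)^*$. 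For fixed $y$, the map $\varphi\mapsto f(\omega_{y,\varphi})$ is a bounded functional on $\mM_*$, hence equals some $S(y)\in\mM$. Then $y\mapsto S(y)^*$ is a bounded $\mM$-module map, and module-linearity must be checked using $\omega_{ya,\varphi}=\omega_{y,\varphi(a^*\,\cdot)}$. Self-duality gives $x\in\mX$ with $S(y)^*=\la x,y\ra$, i.e.\ $f=\hat{}$ evaluation at $x$.
\end{itemize}
Item (3) is then the standard fact that on bounded sets the $w^*$-topology is determined by a norm-total family of functionals, namely the $\omega_{y,\varphi}$.

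\emph{Items (4) and (5).} By (1), $\mL_\mM(\mX)=\mB_\mM(\mX)$, and this is a $C^*$-algebra by Proposition~\ref{L-A-X-C-star-algebra}. To show it is a $W^*$-algebra, it suffices to show that its unit ball is compact in some Hausdorff locally convex topology for which multiplication is separately continuous. Equivalently (Sakai), it suffices to exhibit a predual. Use the point-$w^*$ topology: $T_\lambda\to T$ iff $T_\lambda(x)\to T(x)$ weak$^*$ for all $x$.
\begin{itemize}
\item \emph{Compactness.} By Tychonoff and Banach--Alaoglu for the dual space $\mX$, the bounded nets have point-$w^*$ limits $T$. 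We must verify that $T$ is $\mM$-linear, using $\la y,xa\ra=\la y,x\ra a$ and the normality of $\varphi(\cdot\,a)$. Boundedness is preserved. By (1), $T$ is adjointable, so the closed unit ball of $\mL_\mM(\mX)$ is compact.
\item \emph{Predual.} The functionals $T\mapsto\varphi(\la y,T(x)\ra)$ are continuous. Their closed span $V$ in $\mL_\mM(\mX)^*$ separates points, and Krein--Smulian plus the compactness above identify $\mL_\mM(\mX)\cong V^*$.
\item \emph{Continuity of the involution and products.} The involution is continuous since $\varphi(\la y,T^*x\ra)=\overline{\varphi^*(\la x,Ty\ra)}$. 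Right and left multiplications are handled similarly, where right multiplication uses $\la y, TS x\ra=\la T^*y,Sx\ra$.
\end{itemize}
Hence $\mL_\mM(\mX)$ is a $W^*$-algebra with this weak$^*$ topology. It remains to identify this topology with the $\sigma$-weak topology on bounded sets, which follows from the uniqueness of the predual of a $W^*$-algebra. That identification yields (5).
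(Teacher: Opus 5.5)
Your proposal is correct. The paper gives no proof of this theorem and only cites Paschke and Blecher--Le Merdy; your argument essentially reconstructs Paschke's original proofs: self-duality produces adjoints for (1), the predual spanned by the functionals $x\mapsto\varphi(\la y,x\ra_{_\mM})$ gives (2)--(3), and point-$w^*$ compactness of the unit ball together with Sakai's theorem gives (4), with (5) derived from uniqueness of the predual rather than quoted. The one detail worth writing out is that the canonical map $\mL_\mM(\mX)\to V^*$ is isometric, which Krein--Smulian and Sakai actually need and which holds because $\|T\|=\sup|\varphi(\la y,Tx\ra_{_\mM})|$ over $\|x\|,\|y\|\le 1$ and $\varphi$ in the unit ball of $\mM_*$.
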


The following definition is well accepted by the experts - see, for
instance, \cite[Page 357]{BM} or  \cite[Page 78]{Rieffel}.

\begin{definition}
Let $X$ be a pre-Hilbert $C^*$-module over of a $W^*$-algebra $\mM$. A
self-dual completion of $X$ is a self-dual Hilbert $C^*$-module $\mX$
over $\mM$ containing $X$ as a $w^*$-dense pre-Hilbert $C^*$-submodule
(where the $w^*$-topology on $\mX$ comes from the (Paschke's) duality
mentioned in \Cref{self-dual-facts}).
\end{definition}

\begin{theorem}(\cite[Thm. 3.2]{Pas},\cite[Prop. 6.10]{Rieffel})\label{self-dual-completion}
Every pre-Hilbert $C^*$-module over of a $W^*$-algebra $\mM$ admits a
unique self-dual completion.
\end{theorem}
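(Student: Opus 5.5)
We construct the completion concretely as a space of module maps and then prove uniqueness separately. Let $X$ be a pre-Hilbert $C^*$-module over the $W^*$-algebra $\mM$. Put $X' := \mB_\mM(X, \mM)$, the space of bounded $\mM$-module maps $X \to \mM$. It is a right $\mM$-module under $(f\cdot a)(x) := a^* f(x)$. The map $z \mapsto \hat{z}$ is $\mM$-linear with this convention, because $\widehat{za}(x) = a^*\la z,x\ra_\mM$. It is also isometric, so it embeds $X$ into $X'$. The plan is to extend the inner product from $\hat{X}\times\hat{X}$ to all of $X' \times X'$, show that $X'$ with the extended inner product is self-dual, and show that $\hat{X}$ is $w^*$-dense in it.

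\textbf{Extending the inner product.} For $f, g \in X'$, consider finite families in the closed unit ball of $X$ that form ``approximate orthonormal frames''. The intended formula is
\[
\la f, g\ra := \lim \sum_i f(x_i)^* g(x_i),
\]
with the limit taken $\sigma$-weakly. This is exactly where \Cref{paschke-convergence} is used: the partial sums are norm-bounded by $\|f\|\,\|g\|$, so the net converges. To avoid frames, I would follow Paschke and use the normal states instead. For each $\varphi \in \mM_*^+$, form the semi-inner product $\varphi(\la x,y\ra_\mM)$ on $X$. Complete it to a Hilbert space $H_\varphi$. Then every $f \in X'$ induces a bounded functional $x \mapsto \varphi(f(x))$ of norm at most $\|f\|\,\varphi(1)^{1/2}$, which by Riesz is represented by a vector $\xi_f^\varphi \in H_\varphi$. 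Define
\[
\varphi(\la f,g\ra) := \la \xi_f^\varphi, \xi_g^\varphi\ra.
\]
One then checks that this is linear in $\varphi$, extends to all of $\mM_*$, and is bounded. Hence it defines an element $\la f,g\ra \in (\mM_*)^* = \mM$.

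\textbf{Verifying the module structure.} Next verify that this is an $\mM$-valued inner product on $X'$. Positivity is immediate from the formula. The identity $\la f, g a\ra = \la f,g\ra a$ follows from $\mM$-linearity. Definiteness follows from $\varphi(f(x)) = \la \xi_f^\varphi, [x]\ra$. The new inner product restricts correctly on $X$, i.e. $\la \hat z, \hat y\ra = \la z,y\ra_\mM$, and it satisfies $\la \hat z, g\ra = g(z)$. Self-duality is the main obstacle, and I would handle it in two steps:
\begin{itemize}
\item Given a bounded module map $T: X' \to \mM$, restrict it to $\hat X$. This gives an element $g := T|_{\hat X} \in X'$.
\item Show that $T = \la g, \cdot\ra$ on all of $X'$ by a continuity argument. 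Each $f \in X'$ is a limit of elements of $\hat X$ in the topology determined by the functionals $\varphi(\la \cdot, h\ra)$, because $\xi_f^\varphi$ is a norm-limit of classes of elements of $X$ in $H_\varphi$. So we need $T$ to be continuous for this topology. I would first reduce to the case where $T$ is normal, using that bounded module maps into $\mM$ are automatically $\sigma$-weakly continuous on bounded sets (Paschke's argument via $|T(f)|^2 \le \|T\|^2 \la f,f\ra$).
\end{itemize}
The same approximation shows that $\hat X$ is $w^*$-dense, by \Cref{self-dual-facts}(3).

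\textbf{Uniqueness.} Suppose $\mX_1$ and $\mX_2$ are two self-dual completions of $X$. The identity map on $X$ is isometric for the inner products. For $\xi \in \mX_1$, the map $x \mapsto \la \xi, x\ra$ restricted to $X$ is a bounded module map. Self-duality of $\mX_2$, together with the unique-extension property coming from $w^*$-density and \Cref{self-dual-facts}(3), produces an element $U\xi \in \mX_2$. This gives an inner-product-preserving map $U: \mX_1 \to \mX_2$ extending the identity on $X$. The symmetric construction gives its inverse, so the self-dual completion is unique up to unitary isomorphism fixing $X$.
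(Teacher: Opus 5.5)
The paper gives no proof here; it quotes Paschke and Rieffel. Your outline follows Paschke's dual-module strategy, but the step you dismiss with ``one then checks that this is linear in $\varphi$'' is the heart of the construction, and it is not a routine check.

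\textbf{The main gap: additivity in $\varphi$.} Take $\varphi,\psi\in\mM_*^+$ and $\omega=\varphi+\psi$. The map $[x]_\omega\mapsto([x]_\varphi,[x]_\psi)$ identifies $H_\omega$ isometrically with a closed subspace $K\subseteq H_\varphi\oplus H_\psi$. This subspace is in general proper (e.g.\ the diagonal when $\varphi=\psi$). Under this identification, $\xi^\omega_f$ is the orthogonal projection of $(\xi^\varphi_f,\xi^\psi_f)$ onto $K$.
\begin{itemize}
\item For free you only get $\|\xi^\omega_f\|^2\le\|\xi^\varphi_f\|^2+\|\xi^\psi_f\|^2$.
\item Additivity of $\varphi\mapsto\la\xi^\varphi_f,\xi^\varphi_g\ra$ is equivalent to $(\xi^\varphi_f,\xi^\psi_f)\in K$. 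That means $f$ must be approximable by elements of $\hat X$ \emph{simultaneously} in the $\varphi$- and the $\psi$-seminorm.
\item That simultaneous approximability is essentially the density you later want to extract from the construction. Your self-duality and density steps approximate $f$ separately in each $H_\varphi$ and silently rely on the same compatibility.
\item The module identity $\la f,g\cdot a\ra=\la f,g\ra a$ has the same problem. Since $\varphi(\,\cdot\,a)$ is not positive, the right side is computed from Riesz vectors in the spaces $H_\psi$ of its positive parts, while the left side uses $\xi^\varphi_{g\cdot a}\in H_\varphi$. Module-linearity of $g$ alone does not relate these.
\end{itemize}
A genuinely $W^*$-algebraic input is needed. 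One way:
\begin{itemize}
\item Identify $H_\omega$ with $X\otimes_\mM L^2(\mM,\omega)$ via $[x]_\omega\mapsto x\otimes\xi_\omega$.
\item Then $f$ induces a bounded map $L_f^*\colon y\otimes\xi_\omega\mapsto\pi_\omega(f(y))\xi_\omega$; boundedness comes from $f(y)^*f(y)\le\|f\|^2\la y,y\ra$. This map intertwines the actions of $\pi_\omega(\mM)'$.
\item Write $\varphi=\la\xi_\omega,t'\pi_\omega(\cdot)\xi_\omega\ra$ with $0\le t'\le 1$ in $\pi_\omega(\mM)'$. Then $\xi^\varphi_f$ corresponds to $(1\otimes(t')^{1/2})\xi^\omega_f$, and additivity follows.
\end{itemize}
Cleaner still is Rieffel's route. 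Fix a faithful normal $\mM\subseteq B(H)$ and realize the completion as the operators $T\colon H\to X\otimes_\mM H$ intertwining the $\mM'$-actions. Then $T^*S\in\mM''=\mM$ by the bicommutant theorem, and all the algebraic identities are automatic.

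\textbf{Smaller issues.}
\begin{itemize}
\item \emph{Convention.} With the paper's conventions (inner product linear in the second variable, $\hat z(x)=\la z,x\ra$), your definition gives $\la g,\hat z\ra=g(z)$, not $\la\hat z,g\ra=g(z)$. The latter contradicts $\la\hat z,\hat y\ra=\la z,y\ra$. Also, scalars must act on $X'$ by $(\lambda f)(x)=\bar\lambda f(x)$ for $z\mapsto\hat z$ to be linear.
\item \emph{Self-duality.} The inequality $T(u)^*T(u)\le\|T\|^2\la u,u\ra$ gives continuity for the seminorms $\omega(\la u,u\ra)^{1/2}$. Once the inner product exists, that already suffices. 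Choose $x$ with $\omega(\la f-\hat x,f-\hat x\ra)=\|\xi^\omega_f-[x]_\omega\|^2$ small; then $\omega(|T(f)-\la g,f\ra|^2)=0$ for every $\omega$. No reduction to normal $T$ is needed.
\item \emph{$w^*$-density.} \Cref{self-dual-facts}(3) only describes bounded nets, but your approximants carry no norm bound. You need a Kaplansky-type bounded approximation, or a description of the predual of $X'$.
\item \emph{Uniqueness.} Self-duality of $\mX_2$ applies to module maps defined on $\mX_2$, not on $X$. So $w^*$-density gives uniqueness of an extension of $x\mapsto\la\xi,x\ra$, but not its existence, and you do not argue that $U$ preserves inner products. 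A standard repair: show that $\zeta\mapsto\la\zeta,\cdot\ra|_X$ embeds each $\mX_i$ into $X'$ preserving inner products. The image is then a self-dual, hence orthogonally complemented, submodule containing $\hat X$. Since $\hat X^\perp=\{0\}$ in $X'$, the embedding is onto.
\end{itemize}
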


\noindent {\bf Orthogonality in Hilbert $W^*$-modules.}
\begin{definition}\cite[$\S 8.5.21$]{BM} 
  Let $\mX$ be a Hilbert $W^*$-module over a $W^*$-algebra
  $\mM$.\begin{enumerate}
\item
  An element $u \in \mX$ is said to be a partial isometry if $p := \la
  u,u \ra_\mM$ is a projection in $\mM$; and, the projection $p$ is
  called the initial projection of $u$.

\item  Two partial isometries $u$ and $v$
  in $\mX$ are said to be orthogonal if $\la u, v \ra_\mM = 0$.
\end{enumerate}
  \end{definition}

Observe that, for any partial isometry $u$, the rank-one intertwiner
$\theta_{u,u}$ is a projection in the $W^*$-algebra
$\mL_\mM(\mX)$. Also, for two orthogonal partial isometries $u$ and $ v$ in
$\mX$, the rank-one projections $\theta_{u, u}$ and $ \theta_{v,v}$ are
orthogonal in $\mL_\mM(\mX)$.

\begin{definition}(Paschke)\label{W*-onb-defn}
  Let $\mM$ be a $W^*$-algebra and $\mX$ be a Hilbert $W^*$-module
  over $\mM$. An orthonormal basis for $\mX$ is a set $\{x_i: i \in
  I\} \subset \mX$ consisting of mutually orthogonal non-zero partial
  isometries such that $x = \sum_i^{w^*} x_i \la x_i, x\ra_{_\mM}$ for
  all $x \in \mX$, where the $w^*$-topology on $\mX$ comes from the
  (Paschke's) duality mentioned in \Cref{self-dual-facts}(2).
  \end{definition}

\begin{theorem}\cite[Thm. 3.12]{Pas}\label{W*-facts-1}
  Let $\mM$ be a $W^*$-algebra and $\mX$ be a Hilbert $W^*$-module
  over $\mM$. Then, $\mX$ admits an orthonormal basis.
  
{In particular, $\sum_i^{\sigma\text{-weak}} \theta_{x_i, x_i} = \mathrm{id}_\mX$
 in $\mL_\mM(\mX)$}; hence, the two-sided ideal
$\mathrm{span}\{\theta_{x,y}: x, y \in \mX\}$ is $\sigma$-weakly dense in
$\mathcal{L}_\mM(\mX)$.
\end{theorem}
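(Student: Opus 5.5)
The existence of an orthonormal basis is Paschke's theorem, so the main work is to reprove it by a Zorn's lemma argument; the two "in particular" assertions then follow from \Cref{self-dual-facts}(5). I would consider the family of all sets of mutually orthogonal non-zero partial isometries in $\mX$, ordered by inclusion. Every chain has its union as an upper bound, so Zorn's lemma gives a maximal family $\{x_i : i\in I\}$. For $F\in\mF(I)$ put $P_F:=\sum_{i\in F}\theta_{x_i,x_i}$. By the observation after the definition of orthogonality, these are mutually orthogonal projections in the $W^*$-algebra $\mL_\mM(\mX)$ (see \Cref{self-dual-facts}(4)). Hence $\{P_F\}$ is an increasing net of projections, which converges $\sigma$-weakly to a projection $P=\sum_i\theta_{x_i,x_i}$.

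The key step is to show $P=\mathrm{id}_\mX$. Suppose not, and let $Q:=\mathrm{id}_\mX-P\neq 0$. Then there is $y\in\mX$ with $z:=Q(y)\neq 0$. Moreover $\la x_i,z\ra_\mM=\la Qx_i,y\ra_\mM=0$, because $Px_i=x_i$: indeed $\theta_{x_i,x_i}x_i=x_i\la x_i,x_i\ra_\mM=x_i$, the last equality holding since $\la x_i-x_ip,x_i-x_ip\ra_\mM=0$ for $p=\la x_i,x_i\ra_\mM$.

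This is where I expect the main obstacle: producing a non-zero partial isometry from $z$, i.e.\ a polar decomposition in a Hilbert $W^*$-module. I would first try the following. Let $a:=\la z,z\ra_\mM^{1/2}\in\mM$, and let $e_n$ be the spectral projection of $a$ for $[1/n,\infty)$. Take $b_n:=f_n(a)$ with $f_n(t)=t^{-1}\chi_{[1/n,\infty)}(t)$, and set $u:=zb_n$. Then $\la u,u\ra_\mM=b_na^2b_n=e_n$, which is a projection and is non-zero for $n$ large, since $a\neq0$. Also $u$ is orthogonal to every $x_i$. Adjoining $u$ contradicts maximality, so $P=\mathrm{id}_\mX$. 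Notice that this step uses only the $C^*$-module structure together with the Borel functional calculus in $\mM$, and self-duality is used only through \Cref{self-dual-facts}.

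Once $P_F\to\mathrm{id}_\mX$ $\sigma$-weakly, \Cref{self-dual-facts}(5) applied to this bounded net gives $\sum_{i\in F}x_i\la x_i,x\ra_\mM=P_F(x)\to x$ in the $w^*$-topology for every $x\in\mX$. So $\{x_i\}$ is an orthonormal basis in the sense of \Cref{W*-onb-defn}, and the identity $\sum_i\theta_{x_i,x_i}=\mathrm{id}_\mX$ is exactly the convergence $P_F\to \mathrm{id}_\mX$.

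For density, take $T\in\mL_\mM(\mX)$. The net $TP_F=\sum_{i\in F}\theta_{T(x_i),x_i}$ lies in the span of the rank-one operators and is bounded by $\|T\|$. Multiplication is separately $\sigma$-weakly continuous in a $W^*$-algebra, so $TP_F\to T$ $\sigma$-weakly. Hence $\mathrm{span}\{\theta_{x,y}: x,y\in\mX\}$ is $\sigma$-weakly dense in $\mL_\mM(\mX)$.
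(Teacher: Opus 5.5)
Your proposal is correct and follows essentially the standard Zorn's-lemma argument of \cite[Thm. 3.12]{Pas} and \cite[Lemma 8.5.23]{BM}; the paper only cites these and gives no proof of its own. Extracting the new partial isometry as $z f_n(\la z,z\ra_\mM^{1/2})$ avoids needing a full polar decomposition, and your one tacit step, $Px_i=x_i$, follows at once from $P\geq \theta_{x_i,x_i}$.
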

(A more explicit proof of the preceding fact can be found in
\cite[Lemma 8.5.23]{BM}.)

\subsection{Conditional expectations with finite probabilistic index and finite cp-index}

Recall (from \cite{BDH, FK, izumi}) the following well-studied notions of
index of a conditional expectation.
  \begin{definition}
  Let $\mB \subset \mA$ be an inclusion of $C^*$-algebras and $E: \mA
  \to \mB$ be a conditional expectation.
   \begin{enumerate}
\item    
  $E$ is said to have finite probabilistic index if there exists a
  scalar $\gamma \geq 1$ such that $\gamma \, E - \mathrm{id}_\mA$
  \text{ is positive}; and, the probabilistic index of such an $E$ is
  defined as
  \[
  \mathrm{Ind}_\p(E)=\inf \{\gamma \geq 1 : \gamma \, E -
  \mathrm{id}_\mA \text{ is positive} \}.
  \]
\item $E$ is said to have finite $\cp$-index if there exists a scalar
  $\gamma \geq 1$ such that $ \gamma \, E - \mathrm{id}_\mA $ is
  completely positive; and, the $\cp$-index of such an $E$ is defined
  as
        \[
  \mathrm{Ind}_\cp(E)=\inf \{\gamma \geq 1 : \gamma \, E -
  \mathrm{id}_\mA \text{ is completely positive} \}.
  \]
   \end{enumerate}
  \end{definition}
  
For convenience, the notation $\Ind_\p(E)< \infty$ (resp.,
$\Ind_\cp(E)< \infty$) shall mean that the conditional
expectation $E$ has finite probabilistic index (resp., finite
cp-index). We list some useful facts related to these indices.
  \begin{remark}\label{ce-facts}
    Let $\mB \subset \mA$ be an inclusion of $C^*$-algebras and $E: \mA \to
    \mB $ be a conditional expectation. 
    \begin{enumerate}
\item If $\Ind_\p(E)< \infty$, then $E$ is faithful.

\item $\Ind_\p(E) < \infty$ if and only if $\Ind_{\cp}(E) <
   \infty$. (\cite[Thm. 1]{FK})

\item If $\Ind_\cp(E)< \infty$, then $1 \leq \Ind_\p(E) \leq \Ind_\cp(E)$.

 \item If $\mB$ has no non-zero finite-dimensional representations,
  then $\Ind_\p (E) = \Ind_\cp(E)$. (\cite[Lemma 2.2]{izumi})
    \end{enumerate}
For more on the theory of conditional expectations with finite
probabilistic index and finite cp-index, see \cite{ BDH, FK, izumi}.
  \end{remark}
  \begin{remark}\label{Indp-finite-is-intrinsic}
  Interestingly, in some cases, it turns out that the finiteness of
  the probabilistic index of a normal conditional expectation is not a
  property exclusive to that particular conditional expectation. When
  $\mN$ is a factor, it is an intrinsic property of the inclusion $\mN
  \subset \mM$. More precisely, either every normal conditional
  expectation from $\mM$ onto $\mN$ has finite probabilistic index
  (equivalently, finite cp-index), or none has - see
  \cite[Cor. 3.20]{BDH}.
\end{remark}

  \subsection{Conditional expectations with finite Watatani index}

  The third notion of index, which will be central to the theme of
  this article, is due to Watatani (\cite{watatani}), which was
  initially defined for conditional expectations in unital
  $C^*$-algebras and was later generalized by Izumi (\cite{izumi}) to
  general $C^*$-algebras. We briefly recall this notion, for which we
  first need to recall Watatani's theory of $C^*$-basic construction
  and Haagerup's theory of operator valued weights.

  \subsubsection{Watatani's theory of  basic construction for inclusions
    of $C^*$-algebras}\label{basic-construction} Let $\mB \subset \mA$
  be an inclusion of $C^*$-algebras and $E: \mA \to \mB$ be a faithful
  conditional expectation. Then, $\mA$ becomes a (right) pre-Hilbert
  $C^*$-module over $\mB$ with respect to the natural right action
  (via multiplication from the right) and the $\mB$-valued inner
  product $\la \cdot, \cdot \ra_\mB$ given by
  \[
\la x, y \ra_\mB:=E(x^*y), x, y \in \mA.
\]
Let $\mathfrak{A}$ denote the Hilbert $C^*$-module completion (over $\mB$) of
$\mA$, $\eta: \mA \to \mathfrak{A}$ denote the natural inclusion map
(so that, $\langle \eta(x), \eta(y)\rangle_\mB = \eta(E(x^*y))$ and
$\|\eta(x)\|:= \|E(x^*x)^{1/2}\|$ for all $x,y \in \mA$) and
$\mcal{L}_\mB(\mathfrak{A})$ denote the space of adjointable maps on
$\mathfrak{A}$.

As noted in \Cref{module-basics}, $\mL_\mB(\mathfrak{A})$ is a closed
subspace of the Banach space $\mB_\mB(\mathfrak{A})$ consisting of the
$\mB$-module maps on $\mathfrak{A}$. In fact, in view of
\Cref{L-A-X-C-star-algebra}, $\mL_\mB(\mathfrak{A})$ turns out to be a
unital $C^*$-algebra (with the usual operator norm); and, it is easily
seen that there exists a natural $C^*$-embedding $\lambda: \mA \to
\mcal{L}_\mB(\mathfrak{A})$ satisfying $\lambda(a)(\eta(x)) =
\eta(ax)$ for all $a, x \in \mA$.  Further, there exists a projection
$e_E \in \mL_\mB(\mathfrak{A})$ (called the Jones projection
associated to $E$) satisfying $e_E(\eta(a)) = \eta(E(a))$ and \( e_E
\lambda(a) e_E = \lambda(E(a)) e_E \text{ for all } a \in \mA \) - see
\cite[Lemma 2.1.1(1)]{watatani}. (Note that, in literature, $e_E$ is
denoted more often by $e_\mB$ or by $e_1$.) Thus, the closed subspace
\[ 
C^*_r\langle \mA, e_E\rangle :=\overline{\mathrm{span}}\{\lambda(x)
e_E \lambda(y): x, y \in \mA\}
\]
turns out to be a $C^*$-subalgebra of $\mcal{L}_\mB(\mathfrak{A})$,
and is called the (Watatani's reduced) $C^*$-basic construction of the
inclusion $\mB \subset \mA$ with respect to the conditional
expectation $E$. Notice that $C^*_r\langle \mA, e_E\rangle =
\mK_\mB(\mathfrak{A})$, the ideal of compact operators on
$\mathfrak{A}$ (see \Cref{compact-operators}), because $\lambda(x) e_E
\lambda(y^*) = \theta_{\eta(x), \eta(y)}$ for all $x, y \in \mA$. A
priori, $e_E$ need not belong to $C^*_r\langle \mA, e_E\rangle$;
however, when $\mA$ is unital, then $e_E = \theta_{\eta(1), \eta(1)}
\in C^*_r\langle \mA, e_E\rangle$. (Note: If multiple conditional
expectations appear simultaneously, then $\eta$ (resp., $\lambda$)
will be denoted by $\eta_E$ (resp., $\lambda_E$), in order to avoid
any possible confusion.) 

\noindent{\em Some generalities for inclusions of non-unital $C^*$-algebras.}

Generalizing Pimsner-Popa's notion of basis for a conditional
expectation, Watatani (\cite{watatani}) initiated an ``algebraic'' notion of
finite-index conditional expectation in unital $C^*$-algebras (see
\Cref{quasi-basis}). Later, Izumi pointed out in
\cite{izumi} that the right approach to generalize Watatani's notion
of ``finite-index'' conditional expectation to non-unital
$C^*$-algebras is by appealing to their enveloping von Neumann
algebras and not to their multiplier algebras. Here is a brief outline
of his approach.
  
Consider an inclusion $\mB \subset \mA$ of $C^*$-algebras with a
conditional expectation $E: \mA \to \mB$. Then, it is well-known (see
\cite[Cor. 3.7.9]{Ped}) that the enveloping von Neumann algebra
$\mB^{**}$ can be realized as a $w^*$-closed $*$-subalgebra of
$\mA^{**}$ (however, $1_{\mA^{**}}$ need not belong to $\mB^{**}$)
and, with respect to this inclusion, $E$ extends to a normal
conditional expectation $E^{**}: \mA^{**} \to \mB^{**}$ - see
\cite[III.5.2.10]{blackadar}. Also, $E^{**}(1_{\mA^{**}}) =
1_{\mB^{**}}$ (for instance, by a Proposition on Page 118 of \cite{STR});
and, $E^{**}$ is faithful if and only if $E$ is so.

Following \cite{izumi, Ped}, it will be useful to
realize the multiplier algebras $M(\mA)$ and $M(\mB)$ as unital
$C^*$-subalgebras of $\mA^{**}$ and $\mB^{**}$, respectively.

\begin{theorem}\label{crucial-facts}\label{E-to-E**}\cite{izumi}
Let $\mB \subset \mA$ be an inclusion of $C^*$-algebras and $E: \mA
\to \mB$ be a conditional expectation with $\Ind_\p(E) <
\infty$. Then, the following hold:
    \begin{enumerate}
      \item  
        $\Ind_\p(E^{**}) = \Ind_\p(E)$ and $\Ind_\cp(E^{**}) =
        \Ind_\cp(E)$.
\item $M(\mB)$ can be identified with a unital $C^*$-subalgebra of
  $M(\mA)$. In particular, $\mB^{**} \subset \mA^{**}$ is a unital
  inclusion. \hfill (\cite[Lemma 2.6(1)]{izumi})
\item The restriction of $E^{**}$ to $M(\mA)$ is a conditional
  expectation from $M(\mA)$ onto $M(\mB)$.\\ (\cite[Lemma
    2.6(2)]{izumi})
    \end{enumerate}
    \end{theorem}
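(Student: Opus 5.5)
The plan is to prove the three items in turn, working throughout in $\mA^{**}$. We write $p := 1_{\mB^{**}}$, a projection in $\mA^{**}$, and use that $E^{**}$ is the normal extension of $E$ with $E^{**}(1_{\mA^{**}}) = p$.

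\emph{Item (1).} Since $\mA$ is $\sigma$-weakly dense in $\mA^{**}$ and $E^{**}$ is normal, the map $\gamma E^{**} - \mathrm{id}_{\mA^{**}}$ is the normal extension of $\gamma E - \mathrm{id}_\mA$. We use that the normal extension of a bounded positive map is positive: a positive element of $\mA^{**}$ is a $\sigma$-weak limit of a bounded net of positives from $\mA$ (Kaplansky density), and positivity is preserved under $\sigma$-weak limits. So $\gamma E - \mathrm{id}$ positive implies $\gamma E^{**} - \mathrm{id}$ positive. The converse is immediate by restriction. This gives $\Ind_\p(E^{**}) = \Ind_\p(E)$.

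For the cp-index, apply the same argument at every matrix level. Identify $M_n(\mA)^{**} = M_n(\mA^{**})$; then $(E\otimes \mathrm{id}_n)^{**} = E^{**}\otimes \mathrm{id}_n$, and complete positivity passes to and from the bidual. This gives $\Ind_\cp(E^{**}) = \Ind_\cp(E)$.

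\emph{Item (2).} This is the step I expect to be the main obstacle: showing that $p = 1_{\mA^{**}}$.

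\begin{itemize}
\item Take $\gamma$ with $\gamma E^{**} - \mathrm{id}$ positive, available by (1). Apply it to $q := 1 - p$. Since $E^{**}$ is a $\mB^{**}$-bimodule map and $q$ annihilates $\mB^{**}$, we get $E^{**}(q) = pE^{**}(q)p = E^{**}(pqp) = 0$.
\item Positivity then forces $-q \geq 0$, so $q = 0$, and the inclusion $\mB^{**} \subset \mA^{**}$ is unital.
\item Next, take an approximate unit $\{u_\lambda\}$ of $\mB$. It converges $\sigma$-strongly to $p = 1$ in $\mA^{**}$, so for $m \in M(\mB)$ and $a \in \mA$ we can write $ma = \lim m u_\lambda a$.
\item To see that $M(\mB)\mA \subset \mA$, I would use the finite-index inequality in norm form: for $x \in \mA$,
\[
\|x - x u_\lambda\|^2 \leq \gamma \,\|E((x-xu_\lambda)^*(x-xu_\lambda))\|.
\]
The plan is to show the right-hand side tends to $0$. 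Then $\{u_\lambda\}$ is also an approximate unit for $\mA$, so $mu_\lambda a$ is norm-Cauchy and its limit lies in $\mA$.
\item Controlling $\|E((x-xu_\lambda)^*(x-xu_\lambda))\|$ is the delicate point. I would expand it as $E(x^*x) - u_\lambda E(x^*x) - E(x^*x)u_\lambda + u_\lambda E(x^*x)u_\lambda$, using the bimodule property. Each term converges in norm to $E(x^*x)$ because $E(x^*x) \in \mB$, so the expansion tends to $0$.
\item With $M(\mB)\mA \subset \mA$ and $\mA M(\mB) \subset \mA$ in hand, $M(\mB) \subset M(\mA)$ unitally inside $\mA^{**}$.
\end{itemize}

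\emph{Item (3).} For $m \in M(\mA)$ and $b \in \mB$ we have
\[
E^{**}(m)\,b = E^{**}(mb) = \lim_\lambda E(m u_\lambda b) = E(mb) \in \mB,
\]
since $mb \in \mA$. Similarly $bE^{**}(m) \in \mB$. Hence $E^{**}(m) \in M(\mB)$.

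The restriction is idempotent onto $M(\mB)$ because $E^{**}$ fixes $\mB^{**} \supset M(\mB)$. It is also contractive and positive, so it is a conditional expectation.
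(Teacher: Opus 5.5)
Your proof is correct. Note, though, that the paper does not prove this statement: it simply cites \cite{izumi} (Lemma 2.6 there). The closely related fact in Proposition~\ref{bc-facts}(3), that an approximate unit of $\mB$ is one for $\mA$, is likewise only cited. So your write-up is a self-contained replacement for those citations rather than a different route from an argument given in the text.

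Item (1) is the standard observation that $\gamma E^{**}-\mathrm{id}_{\mA^{**}}$ is the bidual of $\gamma E-\mathrm{id}_\mA$. Positivity therefore passes up to the bidual, and so does complete positivity, via $M_n(\mA)^{**}=M_n(\mA^{**})$; restriction gives the converse.

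In Item (2) the real work is the estimate $\|x-xu_\lambda\|^2\le\gamma\,\|E((x-xu_\lambda)^*(x-xu_\lambda))\|$, combined with the $\mB$-bimodularity of $E$. As a by-product this proves Proposition~\ref{bc-facts}(3). After that, $mu_\lambda\in\mB$ gives $M(\mB)\mA\subseteq\mA$. Item (3) is then just bimodularity of $E^{**}$ together with $mb\in\mA$.

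Two cosmetic remarks. First, the step you flag as the main obstacle, $1_{\mB^{**}}=1_{\mA^{**}}$, is actually the easy part. Your argument applying $\gamma E^{**}-\mathrm{id}$ to $1-p$, using $E^{**}(1-p)=pE^{**}(1-p)p=E^{**}(p(1-p)p)=0$, is already complete. It is even redundant: once $\{u_\lambda\}$ is known to be an approximate unit for $\mA$, the net converges $\sigma$-weakly both to $1_{\mB^{**}}$ and to $1_{\mA^{**}}$. Second, in Item (3) the detour through $\lim_\lambda E(mu_\lambda b)$ is unnecessary, since $mb\in\mA$ and $E^{**}$ restricts to $E$ on $\mA$.
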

 
The following elementary observations will be essential and their
analogues for inclusions of unital $C^*$-algebras with respect to
conditional expectations with (finite) quasi-bases (see
\Cref{quasi-basis}) were proved in \cite{watatani}.

\begin{proposition}\label{bc-facts}
Let $\mB\subset \mA$ be an inclusion of $C^*$-algebras and $E:\mA \to \mB$ be
a conditional expectation with $\mathrm{Ind_p(E)} < \infty$. Then, the
following hold:
\begin{enumerate}
\item $\eta: \mA \to \mathfrak{A}$ is a linear contraction.

\item $\mA$ itself is a Hilbert $C^*$-module over $\mB$, i.e.,
  $\mathfrak{A}=\eta(\mA)$.

  \item $\mA = \mA \mB$. In particular, every approximate identity for
    $\mB$ is an approximate identity for $\mA$ as well. \hfill
    (\cite[Lemma 2.6]{izumi})

  \item $\{e_E\}'\cap \lambda(\mA) = \lambda(\mB)$ (in $\mL_\mB(\mA)$). 

  \item $\lambda(A)e_E \cup e_E\lambda(\mA) \subseteq C_r^*\la \mA,e_E \ra$.

\item The map $\psi :\mB\to C_r^*\la \mA,e_E\ra $ given by
$	\psi(b)= \lambda(b)e_E $, $b \in \mB$,
is an injective $*$-homomorphism.
\end{enumerate}
\end{proposition}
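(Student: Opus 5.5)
The plan is to derive everything from the single operator inequality $E(x^*x)\le \|x\|^2$ together with the Pimsner--Popa type inequality $x^*x \le \gamma E(x^*x)$ for $\gamma \ge \Ind_\p(E)$. Item (1) is immediate: $\|\eta(x)\|^2=\|E(x^*x)\|\le \|x^*x\|=\|x\|^2$, since $E$ is a contraction. For item (2), the positivity of $\gamma E-\mathrm{id}_\mA$ gives
\[
\|x\|^2\le \gamma\,\|E(x^*x)\| = \gamma\|\eta(x)\|^2 .
\]
Hence $\eta$ is bounded below as well as contractive, so the module norm is equivalent to the $C^*$-norm on $\mA$. Since $\mA$ is complete, $\eta(\mA)$ is complete and therefore equals its completion $\mathfrak{A}$. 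Item (3) is cited from \cite[Lemma 2.6]{izumi}. I would nonetheless sketch it: for an approximate unit $(u_\mu)$ of $\mB$, compute
\[
E\big((x-xu_\mu)^*(x-xu_\mu)\big) = E(x^*x)-u_\mu E(x^*x)-E(x^*x)u_\mu+u_\mu E(x^*x)u_\mu \to 0 .
\]
By the norm equivalence in (2), this gives $xu_\mu\to x$. Cohen factorization then yields $\mA=\mA\mB$.

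For item (4), the inclusion $\lambda(\mB)\subseteq\{e_E\}'$ follows from the $\mB$-bimodularity of $E$: for $b\in\mB$ we have $e_E\lambda(b)\eta(x)=\eta(bE(x))=\lambda(b)e_E\eta(x)$. For the converse, suppose $\lambda(a)e_E=e_E\lambda(a)$. Applying both sides to $\eta(u_\mu)$ gives $\eta(aE(u_\mu))=\eta(E(au_\mu))$, that is, $\eta(au_\mu)=\eta(E(a)u_\mu)$ when $u_\mu\in\mB$. Using (3) and the continuity from (2), letting $\mu\to\infty$ gives $\eta(a)=\eta(E(a))$. Since $\eta$ is injective (it is bounded below), $a=E(a)\in\mB$, and injectivity of $\lambda$ finishes the argument.

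For item (5), I would write $\lambda(a)e_E=\lim_\mu \lambda(a)e_E\lambda(u_\mu)$ in norm. This requires checking that $e_E\lambda(u_\mu)\to e_E$ in norm. Note $e_E\lambda(u_\mu)=\lambda(u_\mu)e_E$ on $\eta(\mA)$, and
\[
\|(\lambda(u_\mu)-1)e_E\eta(x)\| = \|\eta((u_\mu-1)E(x))\| .
\]
Uniform convergence of this quantity over the unit ball is the main obstacle, since an approximate unit need not converge in norm. The fix is to avoid it: $\lambda(a)e_E$ is only needed as a limit, and since $\mA=\mA\mB$ we may write $a=a'b$ with $b\in\mB$. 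Then
\[
\lambda(a)e_E=\lambda(a')\lambda(b)e_E=\lambda(a')e_E\lambda(b)\in C^*_r\la\mA,e_E\ra .
\]
Taking adjoints gives $e_E\lambda(\mA)\subseteq C^*_r\la\mA,e_E\ra$.

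For item (6), $\psi$ is linear, multiplicative because $e_E$ commutes with $\lambda(\mB)$, and $*$-preserving for the same reason; its range lies in $C^*_r\la\mA,e_E\ra$ by (5). For injectivity, if $\lambda(b)e_E=0$, then applying it to $\eta(u_\mu)$ gives $\eta(bu_\mu)=0$, so $bu_\mu=0$ for all $\mu$, and hence $b=0$.
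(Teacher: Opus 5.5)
Your proof is correct and follows the paper's approach, with a few local differences. Items (1), (4) and (6) match the paper's argument. In (1) you use contractivity of $E$ directly, where the paper passes through $E^{**}$ and $E^{**}(1_{\mA^{**}})=1_{\mB^{**}}$; the two are equivalent. For (2) and (3) you give self-contained arguments where the paper only points to \cite{watatani} and \cite{izumi}. Your lower bound $\|x\|^2\le\gamma\|E(x^*x)\|$ is exactly what the paper's phrase ``(2) follows from (1)'' leaves implicit, since contractivity of $\eta$ alone does not make $\eta(\mA)$ complete.

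The one real divergence is in (5), and the ``main obstacle'' you identify there does not exist. The paper writes $\lambda(a)e_E=\lim_\gamma\lambda(au_\gamma)e_E=\lim_\gamma\lambda(a)e_E\lambda(u_\gamma)$. This needs only $au_\gamma\to a$ in norm (item (3)) and contractivity of the $*$-homomorphism $\lambda$. Since $\lambda(a)e_E\lambda(u_\gamma)=\lambda(au_\gamma)e_E$, you never need $e_E\lambda(u_\gamma)\to e_E$ in norm. Your Cohen-factorization detour ($a=a'b$) is nonetheless valid. It even gives the slightly sharper $\lambda(\mA)e_E\subseteq\lambda(\mA)e_E\lambda(\mB)$ with no closure, at the cost of invoking the factorization theorem instead of a one-line limit.
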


\begin{proof}
  (1): Consider the normal conditional expectation $E^{**}: \mA^{**}
  \to \mB^{**}$ as in the paragraph preceding
  \Cref{crucial-facts}. Then, for every $a \in \mA$,
\[
\Vert\eta(a)\Vert^2  :=  \|E(a^*a)\|
          	=  \|E^{**}(a^*a)\| 
          	 \leq  \|E^{**}(\Vert a \Vert^2 \cdot 1_{\mA^{**}})\|
          	 = \Vert a \Vert^2 .
                 \]         

                 (2) follows from $(1)$ on the lines of
                 \cite[Prop. 2.1.5]{watatani}. \smallskip
  
                 (3) follows from  \cite[Lemma 2.6]{izumi}. \smallskip
                 
(4): It follows readily from the definitions that $e_E\lambda(b) =
\lambda(b)e_E$ for all $b \in \mB$. Thus, $\lambda(\mB) \subseteq
\{e_E\}'\cap \lambda(\mA)$. \smallskip

Conversely, suppose that $e_E\lambda(a) = \lambda(a)e_E$ for some $a
\in \mA$.  $\mB$. In view of Item (3), fix an approximate unit
$\{u_\gamma\}$ for $\mA$ which is contained in $\mB$.

By Item (1), $\eta$ is
continuous; so,
\[ 	\eta(a)= 
\eta(\lim_\gamma au_{\gamma }) =   \lim_\gamma \lambda(a)e_E (\eta
        (u_{\gamma})) =  \lim_\gamma e_E \lambda(a) (\eta
        (u_{\gamma})) = \eta (E(\lim_\gamma au_{\gamma})) =
        \eta(E(a)).\]
Since $E$ is faithful, $\eta$ is injective; so, $E(a)=a$, i.e., $a\in
\mB$.
\medskip

(5): It suffices to show that $\lambda(\mA) e_E \subseteq C^*_r\la \mA
, e_E\ra$. As above, fix an approximate unit $\{u_\gamma\}$ for $\mA$
which is contained in $\mB$.  Then, for every $a \in \mA$, we have
\[
\lambda(a) e_E = \lim_\gamma \lambda(au_\gamma) e_E = \lim_\gamma \lambda(a) e_E
\lambda(u_\gamma) \in \overline{\lambda(\mA) e_E \lambda(\mB)} \subseteq C^*_r\la \mA , e_E\ra.
\]

(6): That $\psi$ is a $*$-homomorphism follows from the fact that $
\lambda(\mB) = \{e_E\}'\cap \lambda(\mA)$ - as shown in Item (4).

  To show the injectivity of $\psi$, fix an approximate identity
  $\{u_\gamma\}$ for $\mB$.  If $\psi(b)= 0$, then $\lambda(b)e_E =
  0$; and, since $\eta$ is continuous, we get
  \[
  \eta(b) = \lim_\gamma \eta(bu_\gamma) = \lim_\gamma
  \lambda(b)e_E\eta(u_\gamma) = 0.
  \]
Thus, $\eta$ being injective,  $b= 0$. 
\end{proof}

\subsubsection{Quasi-bases, operator valued weights and  conditional
  expectations with finite Watatani index}

    \begin{definition}\cite{watatani}\label{quasi-basis}
      Let $\mB\subset \mA$ be an inclusion of $C^*$-algebras and $E:
      \mA \to \mB$ be a conditional expectation. A finite set $\{u_i :
      1 \leq i \leq n\}$ in $\mA$ is said to be a (right) quasi-basis
      for $E$ if \( x = \sum_i u_i E(u_i^*x)\ (\text{equivalently, $x
        = \sum_i E(xu_i)u_i^* $ }) \text{ for all } x \in \mA.  \)
    \end{definition}

    \begin{remark}\label{IndW-finite-is-intrinsic}
      \begin{enumerate}
      \item
 Like the property of having a conditional expectation
with finite probabilistic index (see \Cref{Indp-finite-is-intrinsic}),
Watatani observed that the property of having a finite quasi-basis is
also an intrinsic property of the inclusion $\mB \subset \mA$ (of
unital $C^*$-algebras). More precisely, when $\mZ(\mA)$ (or $\mZ(\mB)$
or, equivalently, $\mC_\mA(\mB)$) is finite-dimensional, then either
no conditional expectation from $\mA $ onto $\mB$ has a finite
quasi-basis or every conditional expectation from $\mA $ onto $\mB$ has
a finite quasi-basis - see \cite[Prop. 2.10.2]{watatani}.

\item If $\mB \subset \mA$ is an inclusion of $C^*$-algebras with
  $\mA$ unital and $E: \mA \to \mB$ is a conditional expectation with
  finite quasi-basis, then $\mB$ is unital, $\Ind_\p(E)< \infty$ and
  $E$ is faithful - see \cite{watatani}.
    \end{enumerate} \end{remark}

    The following well-known observations will be needed ahead:
\begin{proposition}\cite{watatani, izumi} \label{quasi-basis-facts}
      Let $\mB \subset \mA$ be an inclusion of $C^*$-algebras, $E: \mA
      \to \mB$ be a conditional expectation and $\{u_i: 1 \leq i \leq
      n\}$ be a finite set in $ \mA$.
      \begin{enumerate}
      \item If $\{u_i: 1 \leq i \leq n\}$ is a quasi-basis for $E$,
        then it is a quasi-basis for the conditional expectation
        $E^{**}: \mA^{**} \to \mB^{**}$ as well.

        In particular, $\Ind_\p (E) = \Ind_\p(E^{**})< \infty$ (and
        $E$ is faithful).
      \item If $E$ is faithful, then $\{u_i: 1 \leq i \leq n\}$ is a
        quasi-basis for $E$ if and only if
          \[
          \sum_{i=1}^n \lambda(u_i) e_E \lambda(u_i)^* =
          \mathrm{id}_{\mathfrak{A}}.
          \]
      \end{enumerate}
      In particular, if $E$ has a finite quasi-basis, then $E$ is faithful and 
      \[
      C^*_r\la \mA, e_E\ra
 = \mathrm{span}\{\lambda(x)e_E\lambda(y): x,y\in \mA\} =
 \mL_\mB(\mathfrak{A}) .
 \]
    \end{proposition}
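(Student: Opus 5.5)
For Item (1), I will check the quasi-basis identity on $\mA^{**}$ by a density-and-continuity argument. The map $x \mapsto \sum_{i=1}^n u_i E^{**}(u_i^* x)$ is $\sigma$-weakly continuous on $\mA^{**}$, because $E^{**}$ is normal and left multiplication by a fixed element is $\sigma$-weakly continuous. By hypothesis it agrees with the identity on $\mA$, and $\mA$ is $\sigma$-weakly dense in $\mA^{**}$. Hence it equals the identity on $\mA^{**}$, so $\{u_i\}$ is a quasi-basis for $E^{**}$.

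Since $\mA^{**}$ is unital, \Cref{IndW-finite-is-intrinsic}(2) applies to $E^{**}$. It gives that $\Ind_\p(E^{**})<\infty$ and that $E^{**}$ is faithful. Faithfulness of $E$ follows by restriction. Then \Cref{crucial-facts}(1) gives $\Ind_\p(E)=\Ind_\p(E^{**})$. To avoid circularity (that theorem assumes $\Ind_\p(E)<\infty$), I first note directly that $\Ind_\p(E)\le \Ind_\p(E^{**})$, since $E^{**}$ extends $E$ and positivity restricts. This gives finiteness, and equality then follows from \Cref{crucial-facts}(1).

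For Item (2), assume $E$ is faithful. I compute on $\eta(\mA)$:
\[
\lambda(u_i)e_E\lambda(u_i)^*\eta(x)=\lambda(u_i)e_E\lambda(u_i^*)\eta(x)=\eta(u_iE(u_i^*x)),
\]
using $\lambda(a)^*=\lambda(a^*)$, which holds because $\lambda$ is a $*$-homomorphism. Thus $\sum_i \lambda(u_i)e_E\lambda(u_i)^*$ agrees with the identity on $\eta(\mA)$ if and only if $\eta\bigl(x-\sum_i u_iE(u_i^*x)\bigr)=0$ for all $x$. Faithfulness of $E$ makes $\eta$ injective, so this happens if and only if $\{u_i\}$ is a quasi-basis. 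Since $\eta(\mA)$ is dense in $\mathfrak{A}$ and the operators involved are bounded, agreement on $\eta(\mA)$ is equivalent to equality with $\mathrm{id}_{\mathfrak{A}}$.

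For the final assertion, faithfulness of $E$ comes from Item (1). For $T\in\mL_\mB(\mathfrak{A})$, Item (2) gives
\[
T=\sum_i T\lambda(u_i)e_E\lambda(u_i^*)=\sum_i \theta_{T\eta(u_i),\eta(u_i)}.
\]
By \Cref{bc-facts}(2) we have $\mathfrak{A}=\eta(\mA)$, so $T\eta(u_i)=\eta(a_i)$ for some $a_i\in\mA$. Hence
\[
T=\sum_i\lambda(a_i)e_E\lambda(u_i)^*\in\mathrm{span}\{\lambda(x)e_E\lambda(y)\}.
\]
This gives the chain of inclusions $\mL_\mB(\mathfrak{A})\subseteq\mathrm{span}\subseteq C^*_r\la\mA,e_E\ra\subseteq\mL_\mB(\mathfrak{A})$, so all three coincide. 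The main subtlety is the index equality in Item (1), specifically avoiding circular use of \Cref{crucial-facts}, which is handled by the restriction inequality above.
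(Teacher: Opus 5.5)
Your proof is correct. The paper gives no proof of its own here and simply quotes the result from Watatani and Izumi, and your argument is essentially the standard one from those sources: $\sigma$-weak density of $\mA$ in $\mA^{**}$ plus normality of $E^{**}$ for Item (1); the identity $\lambda(u_i)e_E\lambda(u_i)^*\eta(x)=\eta(u_iE(u_i^*x))$ with injectivity of $\eta$ and density of $\eta(\mA)$ for Item (2); and $T=\sum_i \theta_{T\eta(u_i),\eta(u_i)}$ together with $\mathfrak{A}=\eta(\mA)$ for the final assertion.

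Your use of the restriction inequality $\Ind_\p(E)\le \Ind_\p(E^{**})$ to avoid a circular appeal to Theorem~\ref{crucial-facts}(1) is sound. Finiteness of $\Ind_\p(E)$ can also be obtained without the bidual: Kadison--Schwarz and $x=\sum_i u_iE(u_i^*x)$ give $x^*x\le \|\sum_i u_iu_i^*\|^2\,E(x^*x)$ directly in $\mA$.
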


    \begin{proposition}\label{watatani-index-characterization}
      \cite[Prop. 2.1.5 $\&$ Lemma 2.1.6]{watatani} Let $\mB
      \subset \mA$ be an inclusion of  $C^*$-algebras and $E:
      \mA \to \mB$ be a conditional expectation. Consider  the following
      statements:
\begin{enumerate}
\item $E$ has a  quasi-basis.
\item $\Ind_\p(E) < \infty$ and $\mathrm{id}_{\mathfrak{A}} \in
  C^*_r\la \mA, e_E\ra$.
\item $ \Ind_\p(E) < \infty$ and $\lambda(\mA) \subseteq C^*_r\la \mA,
  e_E\ra$.  \end{enumerate} Then, (1) and (2) are equivalent and each
of them implies (3). In addition, if $\mA$ (and hence $\mB$) is
unital, then all three statements are equivalent.
  \end{proposition}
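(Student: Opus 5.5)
The plan is to prove (1)$\Rightarrow$(2) and (2)$\Rightarrow$(1) directly using \Cref{quasi-basis-facts}, then deduce (3) from (2), and finally handle the unital converse (3)$\Rightarrow$(2).

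\emph{(1)$\Rightarrow$(2).} Let $\{u_i\}_{i=1}^n$ be a quasi-basis for $E$. By \Cref{quasi-basis-facts}(1), $\Ind_\p(E)<\infty$ and $E$ is faithful. By \Cref{quasi-basis-facts}(2),
\[
\mathrm{id}_{\mathfrak{A}}=\sum_i \lambda(u_i)e_E\lambda(u_i)^* .
\]
Each summand equals $\theta_{\eta(u_i),\eta(u_i)}$, so it lies in $C^*_r\la\mA,e_E\ra$.

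\emph{(2)$\Rightarrow$(1).} By \Cref{bc-facts}(2), $\mathfrak A=\eta(\mA)$, so $C^*_r\la\mA,e_E\ra=\mK_\mB(\mathfrak A)$ is the norm closure of the finite sums $\sum_j\theta_{\eta(x_j),\eta(y_j)}$. Since $\mathrm{id}$ belongs to this closure, choose a finite-rank $T=\sum_j\theta_{\eta(x_j),\eta(y_j)}$ with $\|\mathrm{id}-T\|<1$. Then $T$ is invertible in $\mL_\mB(\mathfrak A)$, and
\[
\mathrm{id}=T^{-1}T=\sum_j\theta_{T^{-1}\eta(x_j),\eta(y_j)} .
\]
Because $\mathfrak A=\eta(\mA)$, write $T^{-1}\eta(x_j)=\eta(x_j')$. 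This gives $\mathrm{id}=\sum_j\theta_{\eta(x_j'),\eta(y_j)}$, which is a non-symmetric quasi-basis identity: $x=\sum_j x_j'E(y_j^*x)$.

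To obtain the symmetric form, take adjoints: $\mathrm{id}$ is also $\sum_j\theta_{\eta(y_j),\eta(x_j')}$. The cleanest route is the standard Watatani trick. Since $\mathrm{id}$ lies in the linear span $\mathcal{S}$ of the $\theta$'s, and $\mathcal{S}$ is an ideal, $\mathcal{S}=\mK_\mB(\mathfrak A)$ is unital. In particular $|T|^{1/2}$ lies in $\mathcal{S}$ for the positive element $T=\mathrm{id}$. More directly, write
\[
\mathrm{id}=\mathrm{id}^*\mathrm{id}=\sum_{j,k}\theta_{\eta(y_j)\la \eta(x_j'),\eta(x_k')\ra,\ \eta(y_k)} .
\]
With $P=[\la\eta(x_j'),\eta(x_k')\ra]\ge 0$ in $M_m(\mB)$ and $Q=P^{1/2}$, set $u_l=\sum_j y_jQ_{jl}$. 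Then $\mathrm{id}=\sum_l\theta_{\eta(u_l),\eta(u_l)}$. Faithfulness of $E$ (from $\Ind_\p<\infty$, \Cref{ce-facts}) together with \Cref{quasi-basis-facts}(2) then shows that $\{u_l\}$ is a quasi-basis.

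\emph{(2)$\Rightarrow$(3).} Since $C^*_r\la\mA,e_E\ra$ is an ideal in $\mL_\mB(\mathfrak A)$ containing $\mathrm{id}$, it is all of $\mL_\mB(\mathfrak A)\supseteq\lambda(\mA)$.

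\emph{(3)$\Rightarrow$(2) in the unital case.} If $\mA$ is unital, then $\mathrm{id}_{\mathfrak A}=\lambda(1)\in\lambda(\mA)\subseteq C^*_r\la\mA,e_E\ra$. This is immediate.

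The main obstacle is the symmetrization step in (2)$\Rightarrow$(1). One must check that the matrix manipulation with $P^{1/2}$, which uses that $M_m(\mB)$ acts correctly on tuples in $\mathfrak A=\eta(\mA)$, really produces $\theta$'s of the form $\theta_{\eta(u),\eta(u)}$ with $u\in\mA$ rather than merely in $\mathfrak A$. This relies on \Cref{bc-facts}(2).
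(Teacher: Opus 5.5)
Your proof is correct and is essentially Watatani's own argument for Prop.~2.1.5 and Lemma~2.1.6, which the paper cites without proof: approximate $\mathrm{id}_{\mathfrak{A}}$ by a finite-rank operator close enough to be invertible, absorb the inverse to get an exact decomposition $\mathrm{id}_{\mathfrak{A}}=\sum_j\theta_{\eta(x_j'),\eta(y_j)}$, and then symmetrize using the square root of the positive Gram matrix in $M_m(\mB)$. Two cosmetic points: the sentence about $|T|^{1/2}$ is vacuous and can be deleted, and your closing worry is unfounded, since $u_l=\sum_j y_jQ_{jl}$ lies in $\mA$ automatically ($y_j\in\mA$, $Q_{jl}\in\mB$), so the symmetrization step needs only faithfulness of $E$ (for injectivity of $\eta$), not \Cref{bc-facts}(2).
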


 \begin{remark}\label{quasi-basis-forces-unitality}
      Watatani called a conditional expectation with a finite quasi-basis to
      be of index-finite type. It turns out that if a conditional
      expectation $E: \mA \to \mB$ has a finite quasi-basis, then $\mA$ and
      $\mB$ must be unital with common identity - see
      \cite[Rem. 2.4 (5)]{izumi} (or, \Cref{dual-ce-unital} ahead).
    \end{remark}

The equivalent statements of \Cref{watatani-index-characterization}
are the natural precursors to the generalization of Watatani's notion
of finite-index index conditional expectations in non-unital
$C^*$-algebras, which was developed by Izumi (in \cite{izumi}) using
Haagerup's theory of operator valued weights. (This requires some
theoretical background from various references, which we include
concisely for the sake of completeness.)\smallskip

\noindent {\em Operator valued weights:}

Recall (from \cite{Haag, STR})
that, for any $W^*$-algebra $\mM$, $\widehat{\mM}_+$ denotes the set
of lower semi-continuous maps $m: \mM^+_* \to [0, \infty]$ satisfying
\[
m(\lambda \varphi) = \lambda m(\varphi) \text{ and } m(\varphi + \psi)
= m(\varphi) + m(\psi)
\]
for all $\varphi, \psi \in \mM^+_*$ and $\lambda \geq 0$, where
$\mM^+_*$ denotes the set of $w^*$-continuous positive linear
functionals on $\mM$. Clearly, $\mM_+$ can be considered as a subset
of $\widehat{\mM}_+$ (via evaluation). 

For any inclusion $\mN \subset \mM$ of $W^*$-algebras, an {\em
  operator valued weight} from $\mM$ to $\mN$ is a map $T: \mM_+ \to
\widehat{\mN}_+$ satisfying the following conditions:
\begin{enumerate}
\item $T(c x ) = c T(x)$;
\item  $T(x + y) = T(x) + T(y)$; and,
\item $T(a^* x a) = a^* T(x) a$,\end{enumerate} for all $x, y \in
\mM_+$, $a \in \mN$ and scalars $c \geq 0$, where $(a^* T(x) a) (\varphi)
:= T(x) ( \varphi(a^*\cdot a))$ for $\varphi \in \mN^+_*$.

An operator valued weight $T$ is said to be  bounded if $T(\mM_+)
\subset \mN_+$; and,  normal if for any increasing limit $x_i \nearrow x$
in $\mM_+$, $T(x_i) \nearrow T(x)$ (pointwise) in $\widehat{\mN}_+$.

\begin{remark}\label{finite-weight-to-ce}
  The restriction of any (normal) conditional expectation $E: \mM \to
  \mN$ to $\mM_+$ is a (normal) bounded operator valued weight that maps
  $1_\mM$ to $1_\mN$; and, conversely, every (normal) bounded operator
  valued weight $T: \mM_+ \to \widehat{\mN}_+$ that maps $1_\mM$ to
  $1_\mN$ extends naturally to a (normal) conditional expectation from
  $\mM$ onto $\mN$ - see \cite{Haag, STR}.
\end{remark}

\begin{theorem}\label{crucial-facts-2}\cite{BDH}
Let $\mB \subset \mA$ be an inclusion of $C^*$-algebras and $E: \mA
\to \mB$ be a conditional expectation with $\Ind_\p(E) <
\infty$. Then, the following hold:
  \begin{enumerate}
  \item $\mA^{**}$ is a Hilbert $W^*$-module over $\mB^{**}$ with
    respect to the canonical $\mB^{**}$-valued inner product induced
    by the faithful normal conditional expectation $E^{**}:\mA^{**} \to
    \mB^{**}$.
    \item Identifying $\mA^{**}$ with $\lambda_{E^{**}}(\mA^{**})$ in
      $\mL_{\mB^{**}}(\mA^{**})$, $W^* \Big( \mA^{**} \cup\{
      e_{E^{**}}\}\Big) = \mL_{\mB^{**}}(\mA^{**}) $, where $
      e_{E^{**}}$ denotes the Jones projection associated to $E^{**}$.
      \hfill (\cite[Prop. 3.3]{BDH})
  \item There exists a bounded normal operator valued weight
    $\widehat{E^{**}}$ from $\mL_{\mB^{**}}( \mA^{**})$ to $\mA^{**}$
    such that
    \[
    \widehat{E^{**}}\big(x e_{\widehat{E^{**}}}x^*\big)
    = xx^* \text{ for all } x \in \mA^{**},
    \]
    and $\widehat{E^{**}}(\mathrm{id}_{\mA^{**}}) \in
    \mZ(\mA^{**})$.\hfill (\cite[Thm. 3.5]{BDH}) \end{enumerate}
    \end{theorem}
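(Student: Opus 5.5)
The statement just made is \Cref{crucial-facts-2}, which is attributed to \cite{BDH}. The plan is to reduce all three items to the normal von Neumann setting, using \Cref{crucial-facts}. Set $\mN := \mB^{**} \subset \mM := \mA^{**}$ and $F := E^{**}$. By \Cref{crucial-facts}(1) and (2), this is a unital inclusion and $F$ is a normal conditional expectation with $\Ind_\p(F) = \Ind_\p(E) < \infty$. In particular $F$ is faithful by \Cref{ce-facts}(1). So it suffices to prove (1)--(3) for a faithful normal conditional expectation $F: \mM \to \mN$ with $c := \Ind_\p(F) < \infty$.

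For (1), the pre-Hilbert module $\mM$ with $\la x, y\ra_\mN = F(x^*y)$ is already complete. Indeed, $\|F(x^*x)\| \leq \|x\|^2$, and $x^*x \leq c\,F(x^*x)$ gives $\|x\|^2 \leq c\|F(x^*x)\|$, so the two norms are equivalent. For self-duality I would take a bounded $\mN$-module map $\phi:\mM\to\mN$ and seek $z \in \mM$ with $\phi = \hat z$.
\begin{itemize}
\item First, $\phi$ is normal on bounded sets. This uses the equivalence of norms together with the $\sigma$-weak continuity of $F$, along the lines of the unit ball of $\mM$ being $\sigma$-weakly compact.
\item Next, for each $\varphi\in\mN_*^+$, the functional $\varphi\circ\phi$ is a normal functional on $\mM$.
\item Finally, I would use the Pimsner--Popa type inequality $F(x^*x)\geq c^{-1}x^*x$ to realize $\phi$ through the basic construction.
\end{itemize}
The cleaner route is Paschke's: take the self-dual completion $\overline{\mM}$ from \Cref{self-dual-completion}. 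The inequality $x^*x\le cF(x^*x)$ shows that any bounded $w^*$-Cauchy-type limit in $\overline{\mM}$ is represented by an element of $\mM$, because bounded sets of $\mM$ are $\sigma$-weakly compact and $F$ is normal. Hence $\overline{\mM}=\mM$.

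For (2), $\mL_\mN(\mM)$ is a $W^*$-algebra by \Cref{self-dual-facts}(4). By \Cref{W*-facts-1}, the span of the $\theta_{x,y}$ is $\sigma$-weakly dense. Since $\theta_{x,y} = \lambda(x)e_F\lambda(y)^*$, these rank-one operators lie in $W^*(\lambda(\mM)\cup\{e_F\})$. Therefore $W^*(\lambda(\mM)\cup\{e_F\})$ is $\sigma$-weakly dense in $\mL_\mN(\mM)$ and also $\sigma$-weakly closed, so the two coincide.

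For (3), fix an orthonormal basis $\{x_i\}$ of $\mM$ over $\mN$, using \Cref{W*-facts-1}. Then $\sum_i \lambda(x_i)e_F\lambda(x_i)^* = \mathrm{id}$ $\sigma$-weakly. I would define
\[
\widehat F\Big(\sum_k \lambda(a_k)e_F\lambda(b_k)^*\Big) = \sum_k a_kb_k^*
\]
on the $\sigma$-weakly dense $*$-ideal of finite-rank operators, and extend it to positive elements by monotone limits. This makes $\widehat F$ a normal operator valued weight.

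The main obstacle is boundedness, namely showing that $\widehat F(\mathrm{id}) = \sum_i x_ix_i^*$ converges to a bounded element. This is where $\Ind_\p(F)<\infty$ is essential. I would first try to show that for every finite $J \subseteq I$ the partial sum $\sum_{i\in J} x_ix_i^*$ is norm-bounded by $\Ind_\cp(F)$. The intended tool is \Cref{ce-facts}(2): the map $\Ind_\cp(F)\,F - \mathrm{id}$ is completely positive. Applied at the matrix level to the column or row built from $(x_i)_{i\in J}$, and using that $F(x_i^*x_j)=\delta_{ij}p_i$, this complete positivity should give the bound. \Cref{paschke-convergence} then yields $\sigma$-weak convergence of the sum.

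Well-definedness of $\widehat F$ on the finite-rank operators and the bimodule property $\widehat F(a T b) = a\widehat F(T) b$ follow from the formula. From the bimodule property, $\widehat F(\mathrm{id})$ commutes with $\lambda(\mM)$, and since it lies in $\mM$, it belongs to $\mZ(\mM)$.
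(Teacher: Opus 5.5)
The paper does not prove this statement; it imports all three items from \cite{BDH}, so your argument has to stand on its own. Several parts of it are sound.
\begin{itemize}
\item The reduction to $F=E^{**}$ is fine, and so is item (2).
\item Your key estimate in (3) is correct. For finite $J$ and the row $R=(x_i)_{i\in J}$, complete positivity of $\Ind_\cp(F)F-\mathrm{id}$ gives $R^*R=[x_i^*x_j]\le\Ind_\cp(F)\,\mathrm{diag}(p_i)$. Hence $\|\sum_{i\in J}x_ix_i^*\|=\|R^*R\|\le\Ind_\cp(F)$.
\item Item (1) needs one more line. Your limit argument only shows that $w^*$-limits of \emph{bounded} nets from $\mM$ stay in $\mM$. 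Add Krein--\v{S}mulian: the ball of $\mM$ is then $w^*$-closed in $\overline{\mM}$, so $\mM$ is $w^*$-closed as well as $w^*$-dense, giving $\overline{\mM}=\mM$.
\end{itemize}

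The genuine gap is that $\widehat F$ is never actually constructed.
\begin{itemize}
\item Well-definedness on finite-rank operators does not ``follow from the formula''. You need $\sum_k a_kb_k^*=0$ whenever $T=\sum_k\lambda(a_k)e_F\lambda(b_k)^*=0$. The natural route is $\sum_k a_kb_k^*=\sum_i T(x_i)x_i^*$, which uses $b^*=\sum_i F(b^*x_i)x_i^*$ ($\sigma$-weakly in $\mM$). That identity already depends on your norm bound, so the order of your steps must be reversed.
\item More seriously, ``extend to positive elements by monotone limits'' presupposes two things. First, that $\widehat F$ is positive on positive finite-rank operators; nothing in $\sum_k a_kb_k^*$ shows this. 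Second, that $\sup_\alpha\widehat F(T_\alpha)$ is independent of the increasing net $T_\alpha\nearrow T$. Without these you get neither additivity nor normality.
\item The bimodularity you use for $\lambda(a)$, $a\in\mM$, to place $\widehat F(1)$ in $\mZ(\mM)$ is only known on the ideal, not on $\lambda(a)$ itself.
\end{itemize}

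You can close this with what you already have and bypass monotone extension altogether.
\begin{itemize}
\item Apply the same matrix trick to the row $(T(x_i))_{i\in J}$, together with $[\langle x_i,T^*Tx_j\rangle_\mN]\le\|T\|^2[\langle x_i,x_j\rangle_\mN]$. This gives $\|\sum_{i\in J}T(x_i)T(x_i)^*\|\le\Ind_\cp(F)\|T\|^2$ for every $T\in\mL_\mN(\mM)$.
\item Hence $\widehat F(T):=\sum_iT(x_i)x_i^*$ converges $\sigma$-weakly by \Cref{paschke-convergence} and defines a bounded linear map.
\item For $\varphi\in\mM_*^+$, Cauchy--Schwarz gives the tail bound $|\varphi(\sum_{i\in K}T(x_i)x_i^*)|^2\le\Ind_\cp(F)\|T\|^2\|\varphi\|\,\varphi(\sum_{i\in K}x_ix_i^*)$. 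Since each partial sum is normal in $T$ by \Cref{self-dual-facts}(5), this shows $\widehat F$ is normal.
\item Bounded $w^*$-convergence in the module $\mM$ coincides with $\sigma$-weak convergence, by your argument in (1). So $\{x_i\}$ is a generalized quasi-basis, and therefore $\widehat F(\lambda(a)e_F\lambda(b)^*)=ab^*$.
\item Positivity follows by approximating $T\ge0$ with $T^{1/2}P_JT^{1/2}$, where $P_J=\sum_{i\in J}\lambda(x_i)e_F\lambda(x_i)^*$. Its image under $\widehat F$ is $\sum_{i\in J}(T^{1/2}x_i)(T^{1/2}x_i)^*\ge0$.
\item Right $\mM$-linearity follows by approximating $T\lambda(a)$ with $TP_J\lambda(a)$.
\item Centrality of $\widehat F(1)$ then follows as you say.
\end{itemize}
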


In general, a conditional expectation $E: \mA \to \mB$ with finite
probabilistic index need not have finite Watatani index - see, for
instance, \cite[Pages 89-90]{FK}. The following insightful observation
by Izumi paved the way for generalizing Watatani's notion of
index-finite type conditional expectations for inclusions of
non-unital $C^*$-algebras.
\begin{theorem}\cite[Thm. 2.8]{izumi}\label{izumi-characterization}
Let $\mB, \mA$ and $E$ be as in \Cref{crucial-facts-2}. Then,
identifying $\mA$ (resp., $\mA^{**}$) with $\lambda_E(\mA)$ (resp.,
$\lambda_{E^{**}}(\mA^{**})$, the following statements are equivalent:
\begin{enumerate}
\item  $A \subseteq C^*_r\langle \mA,
  e_E \rangle$.
\item  $\widehat{E^{**}}(\mathrm{id}_{\mA^{**}}) \in \mZ(M(\mA))$.
\end{enumerate}
  \end{theorem}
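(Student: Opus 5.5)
Since Izumi's proof is not reproduced here, I would prove the equivalence (1) $\Leftrightarrow$ (2) of \Cref{izumi-characterization} by comparing Watatani's basic construction $C^*_r\la \mA, e_E\ra \subseteq \mL_\mB(\mA)$ with the $W^*$-basic construction $\mL_{\mB^{**}}(\mA^{**})$. By \Cref{bc-facts}(2), $\mA$ is already a Hilbert $C^*$-module over $\mB$. By \Cref{crucial-facts-2}(1), $\mA^{**}$ is a Hilbert $W^*$-module over $\mB^{**}$, and there is a natural map $\mK_\mB(\mA)=C^*_r\la \mA,e_E\ra \to \mL_{\mB^{**}}(\mA^{**})$ sending $\theta_{x,y}\mapsto \theta_{x,y}$ (equivalently $\lambda(x)e_E\lambda(y^*)\mapsto x e_{E^{**}} y^*$). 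This map is isometric, since both sides are computed through $E$ and $E^{**}$, which agree on $\mA$. Under it, $\widehat{E^{**}}$ restricted to $C^*_r\la\mA,e_E\ra$ is the dual conditional expectation (weight) $\widehat E$ with $\widehat E(xe_Ey^*)=xy^*$, so it maps compacts into $\mA$.

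\emph{(1) $\Rightarrow$ (2).} Assume $\mA\subseteq C^*_r\la\mA,e_E\ra$. Take an approximate unit $\{u_\gamma\}\subset\mB$ for $\mA$ (\Cref{bc-facts}(3)). Each $u_\gamma$ is compact, so it can be approximated in norm by finite sums $\sum_j x_je_Ex_j^*$. Then $\widehat{E^{**}}(u_\gamma)$ makes sense in $\mA$, and since $\widehat{E^{**}}$ is an $\mA^{**}$-bimodule map, $\widehat{E^{**}}(u_\gamma) = u_\gamma^{1/2}\widehat{E^{**}}(1)u_\gamma^{1/2}$ (or the one-sided version $\widehat{E^{**}}(a)=a\,\widehat{E^{**}}(1)$ for $a\in\mA$). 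Hence $a\widehat{E^{**}}(1)=\widehat{E^{**}}(a)\in\mA$ for every $a\in\mA$, and similarly on the other side. This says $\widehat{E^{**}}(1)\in M(\mA)$. Centrality in $\mA^{**}$ is given by \Cref{crucial-facts-2}(3), so $\widehat{E^{**}}(1)\in\mZ(M(\mA))$. The point needing care is that $\widehat{E^{**}}$ is only a bounded normal operator valued weight, so I would check that it is norm-continuous on compacts and that it satisfies $\widehat{E^{**}}(a)=a\widehat{E^{**}}(1)$ for $a\in\mA^{**}$. The latter follows from the bimodule property, using that $\mA^{**}$ sits inside $\mL_{\mB^{**}}(\mA^{**})$ and that the weight is bounded, hence extends to a bounded $\mA^{**}$-bimodule map.

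\emph{(2) $\Rightarrow$ (1).} This is the harder direction. Put $z=\widehat{E^{**}}(1)\in\mZ(M(\mA))$. It is invertible because $\widehat{E^{**}}(1)\ge \Ind_\p(E)^{-1}$-type bounds hold; more precisely, $z\ge 1$ follows from the Pimsner--Popa inequality in the $W^*$ setting. Then $F:=z^{-1}\widehat{E^{**}}$ is a normal conditional expectation onto $\mA^{**}$ (\Cref{finite-weight-to-ce}). The goal is to show that each $a\in\mA$ is a norm limit of compacts. Writing $a=bc$ with $b,c\in\mA$ (Cohen factorization), it suffices to approximate $b\cdot 1\cdot c$. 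By \Cref{W*-facts-1}, $1=\sum_i\theta_{x_i,x_i}$ $\sigma$-weakly for an orthonormal basis $\{x_i\}\subset\mA^{**}$, so $b1c=\sum_i b x_ie_{E^{**}}x_i^*c$.

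\emph{Main obstacle.} The real difficulty is upgrading this $\sigma$-weak convergence to norm convergence inside $\mA$. I would attempt it with a Dini-type argument on the positive elements $b\sum_{i\in F}x_ie_{E^{**}}x_i^*b^*$, which increase to $bb^*$. Applying $\widehat{E^{**}}$ gives $b\sum_{i\in F} x_ix_i^*\, b^*\nearrow bzb^*$. The hypothesis $z\in M(\mA)$ places these limits in $\mA$ rather than merely in $\mA^{**}$, and this is where (2) is used. I would then use the estimate $\|T\|\le\|\widehat{E^{**}}(T)\|$ for $T\ge0$, which holds up to the constant $\|z^{-1}\|$ via $F$, on the differences. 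This turns norm control of $\widehat{E^{**}}(b(1-P_F)b^*)$ into norm control of $b(1-P_F)b^*$, where $P_F=\sum_{i\in F}\theta_{x_i,x_i}$. The remaining issue is that the partial sums must lie in $\mA$ and not only in $\mA^{**}$. To handle this I would replace the $x_i$ by elements of $\mA$, using Kaplansky density of $\mA$ in $\mA^{**}$ together with a compactness (Dini) argument over the state space, since it is $w^*$-compact when $\mA$ is unital and handled through the approximate unit otherwise.
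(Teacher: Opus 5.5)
\paragraph{Verdict.} The paper gives no proof of this statement; it quotes Izumi, who first embeds $C^*_r\la \mA, e_E\ra$ as a $\sigma$-weakly dense $C^*$-subalgebra of $\mL_{\mB^{**}}(\mA^{**})$ using induced representations and quasi-equivalence. Your direction (1)$\Rightarrow$(2) is sound. Since $z^{-1}\widehat{E^{**}}$ is a conditional expectation, $\widehat{E^{**}}$ is $\mA^{**}$-bimodular, and it maps $C^*_r\la\mA,e_E\ra$ into $\mA$. Hence $az=\widehat{E^{**}}(a)\in\mA$ and $za=az$, so $z\in\mZ(M(\mA))$; the approximate unit is not needed. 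Your justification of the isometric embedding ("$E^{**}$ agrees with $E$ on $\mA$") is too thin, because operator norms on $\mA^{**}$ involve vectors outside $\mA$. One way to supply it is to show that $\mA^{**}$ is the self-dual completion of $\mA\otimes_\mB\mB^{**}$ and that $T\mapsto T\otimes 1$ is faithful. This is precisely the step the paper credits to Izumi.

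\paragraph{The gap in (2)$\Rightarrow$(1).} The problem is at the step you flag yourself. Dini's theorem needs an \emph{increasing} net of elements of $\mA$, so that $\phi\mapsto\phi(\cdot)$ is continuous on the compact quasi-state space. To conclude, you also need the approximants to lie in $C:=C^*_r\la\mA,e_E\ra$. The partial sums $P_F=\sum_{i\in F}\theta_{x_i,x_i}$ with $x_i\in\mA^{**}$ give neither:
\begin{itemize}
\item $b\big(\sum_{i\in F}x_ix_i^*\big)b^*$ need not lie in $\mA$;
\item even norm convergence $P_Fb^*\to b^*$ would only put $b^*$ in the norm closure of $\mK_{\mB^{**}}(\mA^{**})$, not in $C$.
\end{itemize}
Kaplansky density does not repair this. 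Replacing the $x_i$ by nearby elements of $\mA$ destroys the identity $\sum_i\theta_{x_i,x_i}=1$, and nothing keeps the perturbed sums below $1$ or makes them converge to $1$.

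\paragraph{The missing idea.} Take the monotone net inside $C$ from the start: let $(f_\lambda)$ be an increasing approximate unit of $C$.
\begin{itemize}
\item $C$ is $\sigma$-weakly dense in $\mL_{\mB^{**}}(\mA^{**})$; this is where your basis $\{x_i\}$ and the density of $\mA$ in $\mA^{**}$ enter. Hence $f_\lambda\nearrow 1$ $\sigma$-weakly.
\item By normality and bimodularity, $b\,\widehat{E^{**}}(f_\lambda)\,b^*\nearrow bzb^*$ for $b\in\mA$.
\item The left side lies in $\mA$ because $\widehat{E^{**}}(C)\subseteq\mA$. The right side lies in $\mA$ because $z\in M(\mA)$; this is exactly where (2) is used.
\item Dini's theorem now gives norm convergence.
\item Therefore $\|(1-f_\lambda)b^*\|^2\le\|b(1-f_\lambda)b^*\|\le c\,\|b(z-\widehat{E^{**}}(f_\lambda))b^*\|\to 0$.
\item So $f_\lambda b^*\to b^*$, and $b^*\in C$ because $C$ is an ideal in $\mL_\mB(\mA)$.
\end{itemize}

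\paragraph{The constant.} The constant $c$ is not $\|z^{-1}\|$. Knowing that $F=z^{-1}\widehat{E^{**}}$ is a conditional expectation only gives $\|F(T)\|\le\|T\|$, which is the wrong direction. For instance, for an index-$4$ subfactor and $T=e$, you would get $1\le 1/4$. What you need is the Pimsner--Popa inequality $T\le\Ind_\p(F)\,F(T)$ for the dual expectation, that is, $\Ind_\p(F)<\infty$ from \cite{BDH} (see \Cref{more-finite-E}). This gives $c=\Ind_\p(F)\,\|z^{-1}\|$.
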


\begin{definition}\cite{izumi}\label{izumi-defn}
Let $\mB \subset \mA$ be an inclusion of $C^*$-algebras. A conditional
expectation $E: \mA \to \mB$ is said to have finite Watatani index if
$\Ind_\p(E) < \infty$ and if it satisfies any of the equivalent
statements of \Cref{izumi-characterization}. Further, the Watatani index of
such a conditional expectation is defined as
\[
\Ind_W(E) = \widehat{E^{**}}(\mathrm{id}_{\mA^{**}}) \in \mZ(M(\mA)).
\]
\end{definition}

The preceding definitions by Izumi indeed generalize Watatani's
definitions:
\begin{proposition}
  \cite{ BDH, izumi, watatani}\label{original-W-index} \label{E-to-E**-unital}
    With notation as in \Cref{crucial-facts-2}, if both $\mB$
    and $\mA$ are unital, then the following hold:
  \begin{enumerate}
    \item $E$ has finite Watatani index if and only if it has a
     finite quasi-basis in $\mA$.
\item If $\{u_i: 1 \leq i \leq n\}\subset \mA$ is a finite quasi-basis
 for $E$, then
     \[
  \mathrm{Ind}_W(E)= \sum_{i=1}^n u_i u_i^* ,
  \]
  which is a positive invertible element in $\mathcal{Z}(\mA)$ and is
  independent of the quasi-basis. (In fact, this is how Watatani
  (in \cite{watatani}) had originally defined the index of a conditional
  expectation of index-finite type.)

 \item If $E$ has finite Watatani index, then so does $E^{**}$ and,
   moreover, $\Ind_W(E^{**}) = \Ind_W(E)$. (In particular,
   $1_{\mB^{**}} = 1_{\mA^{**}}$.)
\end{enumerate}\end{proposition}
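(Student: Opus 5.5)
The three items will be handled in order, working throughout with the identifications $\mA\cong\lambda_E(\mA)$ and $\mA^{**}\cong\lambda_{E^{**}}(\mA^{**})$.

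\emph{Item (1).} Suppose first that $E$ has finite Watatani index. Since $\mA$ is unital, $M(\mA)=\mA$. By \Cref{izumi-characterization}, $\lambda(\mA)\subseteq C^*_r\la\mA,e_E\ra$; the paper assumes $\Ind_\p(E)<\infty$ throughout. We are in the unital case, so \Cref{watatani-index-characterization} makes (3) equivalent to (1), and $E$ has a finite quasi-basis. Conversely, suppose $\{u_i\}$ is a finite quasi-basis. \Cref{quasi-basis-facts} gives $\sum_i\lambda(u_i)e_E\lambda(u_i)^*=\mathrm{id}$. Multiplying on the left by $\lambda(a)$ shows $\lambda(a)\in\mathrm{span}\{\lambda(x)e_E\lambda(y)\}\subseteq C^*_r\la\mA,e_E\ra$. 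Hence statement (1) of \Cref{izumi-characterization} holds, and $\Ind_\p(E)<\infty$ by \Cref{IndW-finite-is-intrinsic}(2).

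\emph{Item (2).} By \Cref{quasi-basis-facts}(1), $\{u_i\}$ is also a quasi-basis for $E^{**}$, so $\sum_i u_ie_{E^{**}}u_i^*=\mathrm{id}_{\mA^{**}}$. The expansion $x=\sum_iu_iE^{**}(u_i^*x)$ holds for all $x\in\mA^{**}$, and in particular $\eta(x)=\sum_i\lambda(u_i)e_{E^{**}}\lambda(u_i^*)\eta(x)$. Applying the bounded (hence finite, additive) operator valued weight $\widehat{E^{**}}$ of \Cref{crucial-facts-2}(3) to each summand, and using $\widehat{E^{**}}(xe x^*)=xx^*$, gives $\Ind_W(E)=\widehat{E^{**}}(\mathrm{id})=\sum_iu_iu_i^*$. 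This shows at once that the sum is independent of the quasi-basis and lies in $\mZ(M(\mA))=\mZ(\mA)$, by the definition and \Cref{izumi-characterization}.

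Positivity is immediate. For invertibility we use a Pimsner–Popa type estimate, as in Watatani. For $x\in\mA_+$ write
\[
x=\sum_iu_iE(u_i^*x)=\sum_i u_iE(u_i^*x^{1/2}x^{1/2}).
\]
Simpler still, $1=\sum_iu_iE(u_i^*)$. Then $1=1^*1\le\|E(\cdot)\|$-type Cauchy–Schwarz for the Hilbert module gives
\[
1=\Big(\sum_iu_iE(u_i^*)\Big)^{\!*}\Big(\sum_iu_iE(u_i^*)\Big)\le\Big\|\sum_iE(u_i)E(u_i^*)\Big\|\cdot\Big(\sum_iu_iu_i^*\Big)\text{-type bound}.
\]
Here we view $(u_i)$ as a row $R$ and $(E(u_i^*))$ as a column $C$, so that $1=RC$ and $1\le \|C\|^2\,RR^*$. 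It follows that $\sum u_iu_i^*=RR^*\ge \|C\|^{-2}1$, which makes it invertible.

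\emph{Item (3).} Since $\{u_i\}\subset\mA\subset\mA^{**}$ is a finite quasi-basis for $E^{**}$, Item (1) applied to $E^{**}:\mA^{**}\to\mB^{**}$ shows that $E^{**}$ has finite Watatani index. Item (2) then gives $\Ind_W(E^{**})=\sum u_iu_i^*=\Ind_W(E)$. Here $\mA^{**}$ is unital, and $\mB^{**}$ is unital as well: $1_{\mA^{**}}=\sum_iu_iE^{**}(u_i^*)$ with $E^{**}(1_{\mA^{**}})=1_{\mB^{**}}$. The identity $1_{\mB^{**}}=1_{\mA^{**}}$ follows from \Cref{quasi-basis-forces-unitality}, or directly from $x=\sum_iu_iE^{**}(u_i^*x)=\sum_iu_iE^{**}(u_i^*x)1_{\mB^{**}}=x1_{\mB^{**}}$.

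\textbf{Main obstacle.} The expected difficulty is the computation $\widehat{E^{**}}(\mathrm{id})=\sum u_iu_i^*$. One must make sure the identification of $\lambda(u_i)e_{E^{**}}\lambda(u_i)^*$ with the elements appearing in \Cref{crucial-facts-2}(3) is correct, and that no normality or limit issues arise. They should not, because the sum is finite.
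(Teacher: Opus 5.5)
The paper gives no proof of this proposition; it is quoted from \cite{BDH, izumi, watatani}, so there is no in-paper argument to compare against. Your proof is correct, and it assembles exactly the facts the paper records.
\begin{itemize}
\item For (1): when $\mA$ is unital, both Izumi's definition and Watatani's characterization reduce to $\Ind_\p(E)<\infty$ together with $\lambda(\mA)\subseteq C^*_r\la\mA,e_E\ra$.
\item For (2): applying the bounded operator valued weight $\widehat{E^{**}}$ termwise to the finite sum $\sum_iu_ie_{E^{**}}u_i^*=\mathrm{id}$ identifies $\Ind_W(E)$ with $\sum_iu_iu_i^*$. Independence of the quasi-basis and centrality then come for free. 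Watatani proved these algebraically instead, e.g.\ $\sum_iu_iu_i^*=\sum_{i,j}v_jE(v_j^*u_i)u_i^*=\sum_jv_jv_j^*$ for a second quasi-basis $\{v_j\}$.
\item For (3): you rerun (1)--(2) for $E^{**}$ with the same quasi-basis. This is licensed by $\Ind_\p(E^{**})=\Ind_\p(E)<\infty$, which you should state explicitly.
\end{itemize}

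The invertibility paragraph in (2) needs tidying.
\begin{itemize}
\item Delete the unused line about $x\in\mA_+$ and the vague ``Cauchy--Schwarz'' phrase.
\item Your displayed product is in the order that does not give your bound. The estimate $(RC)^*(RC)=C^*(R^*R)C\le\|R\|^2C^*C$ gives a lower bound for $\sum_iE(u_i)E(u_i^*)$, not for $\sum_iu_iu_i^*$.
\item What you want is $1=(RC)(RC)^*=R(CC^*)R^*\le\|C\|^2RR^*$, using $CC^*\le\|C\|^2\,1_{M_n(\mA)}$. That yields $\sum_iu_iu_i^*\ge\|C\|^{-2}$.
\item Simpler still, monotonicity of $\widehat{E^{**}}$ together with $e_{E^{**}}\le\mathrm{id}$ gives $\Ind_W(E)=\widehat{E^{**}}(\mathrm{id})\ge\widehat{E^{**}}(e_{E^{**}})=1$ directly.
\end{itemize}
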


\begin{remark} \cite{izumi}
  Let $\mB \subset \mA$ be an inclusion of non-unital $C^*$-algebras
  and $E : \mA \to \mB$ be a conditional expectation with finite
  Watatani index. Then, $E^{**}: \mA^{**} \to \mB^{**}$ need not have
  finite Watatani index.

  Indeed, in \cite[Example 2.15]{izumi}, Izumi gave an example of an
  inclusion $\mB \subset \mA$ of non-unital $C^*$-algebras with a
  conditional expectation $E: \mA \to \mB$ with $\Ind_W(E) = 2$ and
  showed that its extension $E^{**}: \mA^{**} \to \mB^{**}$ does not
  admit a quasi-basis.   In
  particular, in view of \Cref{original-W-index}, $E^{**}$ does not
  have finite Watatani index.
  \end{remark}

In general, a conditional expectation with finite probabilistic index
need not have finite Watatani index. However, in some specific
set up, Izumi made the following remarkable observations:
\begin{theorem}\cite{izumi}
  Let $\mB \subset \mA$ be an inclusion of $C^*$-algebras and $E: \mA
  \to \mB$ be a conditional expectation with $\Ind_\p(E) <
  \infty$.  \begin{enumerate}
    \item If $\mA$ is simple, then $\Ind_W(E)< \infty$ and it is a scalar.
      \hfill
      (\cite[Thm. 3.2]{izumi})
\item If $\mB$ is simple, then $\Ind_W(E) < \infty$.   \hfill
  (\cite[Cor. 3.4]{izumi})
  \end{enumerate}
 Moreover, if $\mA$ is unital, then in both cases $E$ has a
 finite quasi-basis in $\mA$.
\end{theorem}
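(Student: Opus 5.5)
The plan is to verify criterion (1) of \Cref{izumi-characterization}, namely $\lambda(\mA)\subseteq C^*_r\la \mA, e_E\ra=\mK_\mB(\mA)$, and then read off the remaining claims. The basic observation is that $\mK_\mB(\mA)$ is a closed two-sided ideal of $\mL_\mB(\mA)$ (\Cref{compact-operators}). Consequently
\[
J_\mA:=\{a\in\mA:\lambda(a)\in \mK_\mB(\mA)\}\quad\text{and}\quad J_\mB:=\{b\in\mB:\lambda(b)\in\mK_\mB(\mA)\}
\]
are closed two-sided ideals of $\mA$ and $\mB$, respectively. Simplicity will force each of them to be everything, provided it is nonzero.

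In case (1), if $J_\mA\neq 0$ then $J_\mA=\mA$, so $\Ind_W(E)<\infty$. In case (2), if $J_\mB\neq 0$ then $J_\mB=\mB$. By \Cref{bc-facts}(3), $\mA=\mA\mB$, so
\[
\lambda(\mA)=\lambda(\mA)\lambda(\mB)\subseteq \mL_\mB(\mA)\,\mK_\mB(\mA)\subseteq \mK_\mB(\mA),
\]
and again $\Ind_W(E)<\infty$. For the scalar claim in (1), $\Ind_W(E)=\widehat{E^{**}}(\mathrm{id}_{\mA^{**}})$ lies in $\mZ(M(\mA))$ by \Cref{izumi-defn}. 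For simple $\mA$ this centre is $\C$: a central multiplier $z$ commutes with $\mA$, and simplicity together with the Dauns--Hofmann theorem (the primitive ideal space is a point) makes $z$ a scalar. Finally, when $\mA$ is unital, $\mB$ is unital with the same unit by \Cref{crucial-facts}(2), and \Cref{original-W-index}(1) converts finite Watatani index into a finite quasi-basis in $\mA$.

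The heart of the proof, and the step I expect to be the main obstacle, is showing that $J_\mA$ (respectively $J_\mB$) is nonzero. My first attempt would use $F:=\widehat{E^{**}}$ from \Cref{crucial-facts-2}(3), restricted to $\mK_\mB(\mA)$. Since $F(\lambda(x)e_E\lambda(y)^*)=xy^*$ and $F$ is bounded, $F$ maps $\mK_\mB(\mA)$ into $\mA$. It is an $\mA$-bimodule map, positive and faithful, so it behaves like a conditional expectation of finite index from the basic construction back down to $\mA$.

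Using finite $\cp$-index (\Cref{ce-facts}(2)), I would then aim for a Pimsner--Popa type inequality comparing $\lambda(F(k))$ with a multiple of a compact element, for suitable positive $k\in\mK_\mB(\mA)$. The first candidate is $k=\lambda(x)e_E\lambda(x)^*$ with $x$ taken from $\mB$ (or from an approximate unit inside $\mB$), where $e_E$ commutes with $\lambda(\mB)$ by \Cref{bc-facts}(4).

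From such an inequality one would extract a nonzero positive $a$ with $\lambda(a)$ dominated by, hence (via a hereditary-subalgebra argument, e.g.\ $\lambda(a)^{1/2}$ factoring through a compact) lying in, $\mK_\mB(\mA)$. If this direct estimate fails, the fallback is to pass to $\mA^{**}$. There the $\sigma$-weak closure of $\mK_{\mB^{**}}(\mA^{**})$ is all of $\mL_{\mB^{**}}(\mA^{**})$ by \Cref{W*-facts-1}, and the central support of $\widehat{E^{**}}(\mathrm{id})$ in $\mZ(\mA^{**})$ can be used to locate nonzero elements of $\mA$ inside the compacts.
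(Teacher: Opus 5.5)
The paper gives no proof of this statement; it quotes Izumi, so your outline has to stand on its own. Its frame is sound. $J_\mA$ and $J_\mB$ are closed ideals, $J_\mB=\mB$ gives $\lambda(\mA)=\lambda(\mA\mB)\subseteq\mK_\mB(\mA)$, $\mZ(M(\mA))=\C$ for simple $\mA$, and the unital case follows from \Cref{original-W-index}.

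\textbf{The gap.} For simple $\mA$, the assertion $J_\mA\neq 0$ is equivalent to $\Ind_W(E)<\infty$. So the reduction moves the whole content of the theorem into the step you leave open, and the tools you indicate cannot supply it:
\begin{enumerate}
\item \emph{The inequality points the wrong way.} What finite ($\cp$-)index gives for $\widehat{E^{**}}$ is of Pimsner--Popa type: $\widehat{E^{**}}(y)\ge c\,y$ for $y\ge 0$, since its normalisation has finite probabilistic index by \Cref{more-finite-E}. This bounds compact operators \emph{above} by elements of $\lambda(\mA)$. Heredity of $\mK_\mB(\mA)$ needs the reverse, $0\le\lambda(a)\le\sum_{i=1}^n\lambda(x_i)e_E\lambda(x_i)^*$ for some $a\neq 0$. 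That is a local finite quasi-basis, which is exactly what must be proved.
\item \emph{Your concrete candidate provably fails.} For $x\in\mB$ one has $\lambda(x)e_E\lambda(x)^*=\lambda(xx^*)e_E$. Then $0\le\lambda(a)\le C\lambda(xx^*)e_E$ forces $\lambda(a)(1-e_E)=0$, i.e.\ $a\ker E=0$. The set of such $a$ is a closed left ideal stable under right multiplication by $\mB$. Since $\mA=\mB+\ker E$, it is a two-sided ideal, hence $0$ when $\mA$ is simple and $\mB\neq\mA$.
\item \emph{The fallback is empty.} $\sigma$-weak density of the compacts holds for every $E$ with $\Ind_\p(E)<\infty$ and says nothing about norm membership of $\lambda(a)$. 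Also $\widehat{E^{**}}(\mathrm{id}_{\mA^{**}})$ is invertible, so its central support is $1$ and carries no information.
\end{enumerate}

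\textbf{A route that works for (1).} Show directly that $z:=\widehat{E^{**}}(\mathrm{id}_{\mA^{**}})$ is a scalar.
\begin{enumerate}
\item Take an increasing approximate unit $(k_\alpha)$ of $C^*_r\la\mA,e_E\ra$. By $\sigma$-weak density, $k_\alpha\nearrow 1$ in $\mL_{\mB^{**}}(\mA^{**})$. By normality, $a_\alpha:=\widehat{E^{**}}(k_\alpha)\in\mA_+$ increases to $z$.
\item Put $p_t:=\chi_{(t,\infty)}(z)$, a central projection. Since $a_\alpha\le z$ and $1-p_t$ is central, $(1-p_t)a_\alpha\le t(1-p_t)$, so $(a_\alpha-t)_+\in p_t\mA^{**}$.
\item Conversely, any central $r\le p_t$ annihilating every $(a_\alpha-t)_+$ satisfies $ra_\alpha\le tr$, hence $rz\le tr$, hence $r=0$.
\item So $p_t$ is the central support of the closed ideal of $\mA$ generated by the $(a_\alpha-t)_+$. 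Simplicity forces $p_t\in\{0,1\}$ for every $t$.
\item Hence $z\in\C 1\subseteq\mZ(M(\mA))$, and \Cref{izumi-characterization} applies. This also gives $J_\mA=\mA$ a posteriori.
\end{enumerate}

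\textbf{Part (2) remains open.} The same computation only shows that the spectral projections of $z$ are supports of ideals of $\mA$. You still need a genuine argument when $\mB$ is simple, either controlling the ideal structure of $\mA$ or otherwise producing a nonzero $b\in\mB$ with $\lambda(b)$ compact. Your outline contains none.
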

\begin{example}\cite{izumi}  \begin{enumerate}
    \item Let $\mB \subset \mA$ be an inclusion of unital
      $C^*$-algebras with a conditional expectation $ E : \mA \to \mB$
      of finite Watatani index. Consider the canonical inclusion $\mB
      \omin \mK(\ell^2) \subset \mA \omin \mK(\ell^2)$ of non-unital
      $C^*$-algebras, where $\mK(\ell^2)$ denotes the non-unital
      $C^*$-algebra of compact operators on $\ell^2$. Then, the
      conditional expectation $E \omin \mathrm{id}_{\mK(\ell^2)}:\mA
      \omin \mK(\ell^2) \to \mB \omin \mK(\ell^2)$ has finite Watatani
      index and $\Ind_W (E \omin \mathrm{id}_{\mK(\ell^2)}) = \Ind_W
      (E) \ot 1$. \hfill (\cite[Lemma 2.11]{izumi})
      \item If a finite group $G$ acts on a simple $C^*$-algebra
        $\mA$, then the canonical (averaging) conditional expectation
        $E: \mA \to \mA^G$ has finite Watatani index. Moreover, if the
        action is outer, then $\Ind_W(E) = |G|$. \hfill
        (\cite[Cor. 3.12]{izumi})
\end{enumerate}  \end{example}

\subsubsection{Dual conditional expectation}\label{dual-ce-izumi-section}

Recall from \cite{watatani} that, if $\mB \subset \mA$ is an inclusion
of unital $C^*$-algebras, then for any conditional expectation $E: \mA
\to \mB$ with a quasi-basis, then there exists a dual conditional
expectation $E_1: C^*_r\la \mA, e_E \ra \to \mA$ with a quasi-basis
(see \Cref{dual-ce-unital} for the statement).

More generally, for any conditional expectation $E: \mA \to \mB$ with
$\Ind_\p(E)< \infty$, the bounded normal operator valued weight
$\widehat{E^{**}}$ (as in \Cref{crucial-facts-2}) yields a normal
conditional expectation $ \Ind_W(E)^{-1} \widehat{E^{**}}:
\mL_{\mB^{**}}(\mA^{**}) \to \mA^{**}$.  Then, via some very astute
application of the notions of induced representations and of 
quasi-equivalence between two representations, Izumi (in \cite[$\S
  2$]{izumi}) identified the reduced $C^*$-basic construction
$C^*_r\la \mA, e_E \ra$ with a $\sigma$-weakly dense $C^*$-subalgebra
of the $W^*$-algebra $\mL_{\mB^{**}}(\mA^{**})$; and, when
$\Ind_W(E)< \infty$, it turns out that the map $E_1 := \Ind_W(E)^{-1}
\Big( \widehat{E^{**}}\Big)_{\restriction_{C^*_r\la \mA, e_E\ra}}$ is
a conditional expectation from $C^*_r\la \mA, e_E\ra$ onto $\mA$,
which is called the dual conditional expectation of $E$. 

\begin{theorem}\cite[Thm. 2.8]{izumi}\label{dual-ce-izumi}
Let $\mB \subset \mA$ be an inclusion of $C^*$-algebras and $E: \mA
\to \mB$ be a conditional expectation with finite Watatani
index. Then, identifying $\mA$ with $\lambda_E(\mA)$, there exists a
conditional expectation $E_1: C^*_r\la \mA, e_E \ra \to \mA$ with finite
Watatani index  satisfying
\[
 E_1(x e_E y) = \Ind_W(E)^{-1} xy \text{ for all } x, y \in \mA.
\]
\end{theorem}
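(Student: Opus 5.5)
The strategy is to construct $E_1$ as the restriction of the normalized dual operator valued weight $\Ind_W(E)^{-1}\widehat{E^{**}}$ of \Cref{crucial-facts-2}(3), realized through Izumi's identification of $C^*_r\la \mA, e_E\ra$ with a $\sigma$-weakly dense $C^*$-subalgebra of $\mL_{\mB^{**}}(\mA^{**})$. The proof then has four parts: define the map, check it lands in $\mA$ with the right formula, check it is a conditional expectation, and check it has finite Watatani index.

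\textbf{Step 1: $E_1$ is a conditional expectation with the stated formula.}
Since $E$ has finite Watatani index, \Cref{izumi-characterization} gives $\Ind_W(E)=\widehat{E^{**}}(\mathrm{id}_{\mA^{**}})\in\mZ(M(\mA))$. Because $\widehat{E^{**}}(\mathrm{id})$ dominates a positive multiple of the identity (via $\Ind_\p(E)<\infty$), it is positive and invertible in $\mZ(M(\mA))$. Hence
\[
F:=\Ind_W(E)^{-1}\widehat{E^{**}}
\]
is a bounded normal operator valued weight sending $1$ to $1$. By \Cref{finite-weight-to-ce}, $F$ is a normal conditional expectation from $\mL_{\mB^{**}}(\mA^{**})$ onto $\mA^{**}$.

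The bimodule property over $\mA^{**}$, together with $\widehat{E^{**}}(xe x^*)=xx^*$, polarizes to $\widehat{E^{**}}(xey)=xy$ for $x,y\in\mA^{**}$. Under Izumi's identification, $e_E$ corresponds to $e_{E^{**}}$, so $F(xe_Ey)=\Ind_W(E)^{-1}xy$ for $x,y\in\mA$. This element lies in $M(\mA)\mA\subseteq\mA$, because $\Ind_W(E)$ is a multiplier of $\mA$.

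By continuity and linearity, $F$ maps $C^*_r\la\mA,e_E\ra=\overline{\mathrm{span}}\{xe_Ey\}$ into $\mA$. By \Cref{watatani-index-characterization}, or directly by the definition of finite Watatani index, $\mA\subseteq C^*_r\la\mA,e_E\ra$. So $E_1:=F|_{C^*_r\la\mA,e_E\ra}$ is an idempotent, contractive, positive $\mA$-bimodule projection onto $\mA$, i.e.\ a conditional expectation.

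\textbf{Step 2: $\Ind_\p(E_1)<\infty$.}
I would aim to show that $\Ind_\cp(F)<\infty$ on $\mL_{\mB^{**}}(\mA^{**})$ by a Pimsner--Popa type estimate. For $T\geq0$ in $\mL_{\mB^{**}}(\mA^{**})$, use an orthonormal basis $\{x_i\}$ of the Hilbert $W^*$-module $\mA^{**}$ from \Cref{W*-facts-1}, with $\sum_i\theta_{x_i,x_i}=\mathrm{id}$. Writing $T^{1/2}\theta_{x_i,x_i}T^{1/2}$ in terms of $x e y$ expressions, I would try to bound $T$ by $c\,\widehat{E^{**}}(T)$, with $c$ related to $\Ind_\cp(E)$.

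An alternative I would try first: use the known fact from \cite{BDH} that $\widehat{E^{**}}$ dominates the identity up to the constant $\|\Ind_W(E)\|\,\Ind_\cp(E)$. Then restrict this bound to the $C^*$-subalgebra and multiply by the central invertible $\Ind_W(E)^{-1}$.

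\textbf{Step 3: the Watatani condition for $E_1$.}
It remains to verify \Cref{izumi-characterization}(1) for $E_1$: the algebra $C^*_r\la\mA,e_E\ra$ embeds in its own reduced basic construction $C^*_r\la C^*_r\la\mA,e_E\ra,e_{E_1}\ra$. My plan is to use an approximate unit $\{u_\gamma\}\subset\mB$ for $\mA$, from \Cref{bc-facts}(3), and produce elements of the form $e_Ee_{E_1}e_E$-type products. Mimicking Watatani's unital proof that $\{\sqrt{\Ind_W(E)}\,u_ie_E\}$-type sets form a quasi-basis, this would show that each generator $xe_Ey$ is a norm limit of elements of the second basic construction.

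\textbf{Main obstacle.}
Step 3 is the hard part, since no finite quasi-basis exists in the non-unital case. If the direct approach stalls, I would instead verify the equivalent condition \Cref{izumi-characterization}(2). This means identifying $\widehat{E_1^{**}}(\mathrm{id})$ with $\Ind_W(E)$, computed through the $W^*$-level tower $\mA^{**}\subset\mL_{\mB^{**}}(\mA^{**})\subset\cdots$, and checking that this element lies in the center of $M(C^*_r\la\mA,e_E\ra)$. This is plausible since $\Ind_W(E)$ is a central multiplier commuting with $e_E$.
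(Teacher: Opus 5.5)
Your Steps~1 and~2 are sound. Step~1 is precisely the construction the paper sketches just before the statement; the paper gives no further argument and defers to Izumi. You restrict $\Ind_W(E)^{-1}\widehat{E^{**}}$ along Izumi's identification of $C^*_r\la\mA,e_E\ra$ inside $\mL_{\mB^{**}}(\mA^{**})$, and use $\Ind_W(E)^{-1}\in M(\mA)$ to land in $\mA$. For Step~2 the restriction argument works. In fact $\widehat{E^{**}}(T)\geq T$ for $T\geq 0$, so $\Ind_\p(E_1)\leq\|\Ind_W(E)\|$.

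The gap is Step~3. That $E_1$ has finite Watatani index is the substantive new assertion, and you give only a plan for it. Your fallback is also wrong as stated, on two counts:
\begin{itemize}
\item $\Ind_W(E)$ does not commute with $e_E$ in general. Already for unital $\mA$ one has $\{e_E\}'\cap\mA=\mB$.
\item $\widehat{E_1^{**}}(1)$ is not $\Ind_W(E)$. Take $\C\subset\C^2$ with $E$ the state $(t,1-t)$, $t\neq\frac12$. Then $\Ind_W(E)=(t^{-1},(1-t)^{-1})$, whereas $C^*_r\la\mA,e_E\ra=M_2$, $E_1$ is the diagonal compression, and $\Ind_W(E_1)=2$. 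In the unital case $\Ind_W(E_1)$ is right multiplication by $E(\Ind_W(E))$.
\end{itemize}
That route would also need $(C^*_r\la\mA,e_E\ra)^{**}\cong\mL_{\mB^{**}}(\mA^{**})$, which you do not address. Likewise, the set $\{\sqrt{\Ind_W(E)}\,u_ie_E\}$ you allude to fails to be a quasi-basis for $E_1$ in that example. The square root has to stand to the right of $e_E$.

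The missing idea is that no approximate units or limits are needed: each generator $xe_Ey$ is \emph{exactly} a rank-one operator. Put $\mA_1:=C^*_r\la\mA,e_E\ra$ and $z:=\Ind_W(E)$. Since $z\geq 1$ lies in $\mZ(M(\mA))$, so do $z^{\pm 1/2}$. Also $\mA_1 M(\mA)\subseteq\mA_1$, so $x e_E z^{1/2}\in\mA_1$ for $x\in\mA$. Now let $x,y,a,c\in\mA$, and use $e_E w e_E=E(w)e_E$, the centrality of $z$, and $E_1(w e_E c)=z^{-1}wc$. This gives
\[
\theta_{x e_E z^{1/2},\, y^* e_E z^{1/2}}(a e_E c)= x e_E z^{1/2} E_1\big(z^{1/2}E(ya) e_E c\big)= x e_E E(ya) c = x E(ya) e_E c=(x e_E y)(a e_E c).
\]
The span of $\{a e_E c\}$ is dense in the Hilbert $\mA$-module $\mA_1$, so
\[
\lambda_{E_1}(x e_E y)=\theta_{x e_E z^{1/2},\, y^* e_E z^{1/2}}\in\mK_{\mA}(\mA_1)=C^*_r\la\mA_1,e_{E_1}\ra .
\]
Hence $\mA_1\subseteq C^*_r\la\mA_1,e_{E_1}\ra$. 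Together with $\Ind_\p(E_1)<\infty$, this is exactly Izumi's definition of finite Watatani index. For unital $\mA$ with quasi-basis $\{u_i\}$, summing the identity with $x=u_i$, $y=u_i^*$ recovers Watatani's quasi-basis $\{u_i e_E z^{1/2}\}$ for $E_1$.
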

We can thus deduce the following well-known fact (see
\Cref{quasi-basis-forces-unitality}).
\begin{corollary}
  \label{dual-ce-unital} 
Let $\mB \subset \mA$ be an inclusion of $C^*$-algebras and $E: \mA
\to \mB$ be a conditional expectation.  If $E$ has a
finite quasi-basis, then there exists a dual conditional expectation $E_1:
C^*_r\langle \mA, e_E\rangle \to \mA$ with a finite quasi-basis.

  In particular, $\mA$ and $\mB$ are both
  unital with common identity.
    \end{corollary}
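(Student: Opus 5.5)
The plan is to deduce the corollary from \Cref{dual-ce-izumi} together with the characterizations in \Cref{watatani-index-characterization} and \Cref{quasi-basis-facts}. The intermediate goals are: show that $E$ has finite Watatani index, obtain the dual $E_1$, exhibit an explicit finite quasi-basis for $E_1$, and then read off unitality.

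\textbf{Step 1: $E$ has finite Watatani index.} Let $\{u_i\}_{i=1}^n$ be a finite quasi-basis for $E$. By \Cref{quasi-basis-facts}(1), $\Ind_\p(E)<\infty$ and $E$ is faithful. By \Cref{watatani-index-characterization}, (1) implies (3), so $\lambda(\mA)\subseteq C^*_r\la \mA,e_E\ra$. This is statement (1) of \Cref{izumi-characterization}, hence $\Ind_W(E)<\infty$ in the sense of \Cref{izumi-defn}. Now \Cref{dual-ce-izumi} supplies a conditional expectation $E_1: C^*_r\la\mA,e_E\ra\to\mA$ satisfying $E_1(xe_Ey)=\Ind_W(E)^{-1}xy$.

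\textbf{Step 2: an explicit quasi-basis for $E_1$.} Here I would follow Watatani. By \Cref{quasi-basis-facts}(2), $\sum_i u_ie_Eu_i^*=\mathrm{id}_{\mathfrak A}$, so $\mathrm{id}_{\mathfrak A}$ lies in $C^*_r\la\mA,e_E\ra$ and acts as its identity. Since $C^*_r\la\mA,e_E\ra$ is the linear span of the $xe_Ey$, which is already noted in \Cref{quasi-basis-facts}, it suffices to check the quasi-basis identity on elements $z=xe_Ey$. I propose the candidate set
\[
\{w_i:=u_ie_E\,\Ind_W(E)^{1/2}\}_{i=1}^n .
\]
Note that $\Ind_W(E)$ is central and, being equal to $\widehat{E^{**}}(1)$, is positive; in the unital picture it is invertible. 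The computation runs as follows:
\[
w_iE_1(w_i^*xe_Ey)=u_ie_E\Ind_W(E)^{1/2}\,\Ind_W(E)^{-1}\Ind_W(E)^{1/2}\,E_1(\ldots).
\]
More carefully, I would use $e_Eu_i^*xe_E=E(u_i^*x)e_E$, so that
\[
E_1(w_i^*xe_Ey)=\Ind_W(E)^{1/2}E_1\big(E(u_i^*x)e_Ey\big)=\Ind_W(E)^{-1/2}E(u_i^*x)y .
\]
Then
\[
\sum_i w_iE_1(w_i^*z)=\sum_i u_ie_EE(u_i^*x)y=\sum_i u_iE(u_i^*x)e_Ey=xe_Ey .
\]
Centrality of the index is used to move its square roots past $e_E$ and past elements of $\mA$.

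\textbf{Main obstacle.} The delicate point is making sense of $\Ind_W(E)^{\pm1/2}$ and its centrality inside $\mA$ before unitality is known. I expect to handle this by first showing that $\mA$ is unital with the identity $\sum_iu_iu_i^*\Ind_W(E)^{-1}$, or more directly as follows. The identity $\mathrm{id}_{\mathfrak A}=\sum u_ie_Eu_i^*$ belongs to $C^*_r\la\mA,e_E\ra$, so
\[
1_0:=\Ind_W(E)\,E_1(\mathrm{id}_{\mathfrak A})=\sum_iu_iu_i^*\in\mA .
\]
For $a\in\mA$, applying $E_1$ to $\lambda(a)\mathrm{id}=\lambda(a)$ gives $E_1(\lambda(a))=a$, and hence $a=\Ind_W(E)^{-1}a\,1_0$. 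One can instead take the quasi-basis with the index adjusted so that no square roots are needed, for example $\{u_ie_E\}$ paired with a normalization absorbed via $E_1$. In either case, $\lambda(\mA)$ contains $\mathrm{id}_{\mathfrak A}=E_1(\mathrm{id}_{\mathfrak A})$, since a conditional expectation fixes the identity when the identity lies in the range's unitization. So $\mA$ is unital with $1_\mA=\mathrm{id}$.

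\textbf{Conclusion.} Then $E(1_\mA)$ is a unit for $\mB$, because $E$ is a $\mB$-bimodule map and $b=E(b1_\mA)=bE(1_\mA)$. Faithfulness of $E$ together with $E(1-E(1))=0$, or \Cref{bc-facts}(3) with $\mA=\mA\mB$, forces $1_\mB=1_\mA$, which is the common identity.
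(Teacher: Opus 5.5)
Your proof is correct. Its skeleton matches the paper's: finite Watatani index from the quasi-basis, Izumi's dual $E_1$, and unitality of $\mA$ via $E_1(\mathrm{id}_{\mathfrak A})$. The genuine difference is how you obtain the quasi-basis for $E_1$.

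The paper gets it abstractly. \Cref{dual-ce-izumi} says $E_1$ itself has finite Watatani index, and once $C^*_r\la\mA,e_E\ra$ and $\mA$ are known to be unital, \Cref{original-W-index}(1) turns this into a finite quasi-basis. You instead write down Watatani's quasi-basis $\{u_ie_E\Ind_W(E)^{1/2}\}$ and check it on the spanning elements $xe_Ey$. That check uses only:
the formula $E_1(xe_Ey)=\Ind_W(E)^{-1}xy$;
the $\mA$-bimodularity of $E_1$;
the relation $e_E\lambda(x)e_E=\lambda(E(x))e_E$.
So you never need the deeper fact that $E_1$ has finite Watatani index. You also get an explicit basis, and $\Ind_W(E)=\sum_iu_iu_i^*$ as a by-product.

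You also prove the ``in particular'' clause for $\mB$, which the paper's proof leaves implicit. Either use $E(1_\mA)$ together with faithfulness, or note directly that $x=\sum_iu_iE(u_i^*x)=x1_\mB$.

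Two presentational points:
\begin{enumerate}
\item Unitality of $\mA$ must be established before your Step 2. Until then, $\Ind_W(E)^{\pm1/2}$ is only known to lie in $\mZ(M(\mA))$, so $w_i$ is not yet an element of $C^*_r\la\mA,e_E\ra$ of the required form.
\item The sentence ``a conditional expectation fixes the identity when the identity lies in the range's unitization'' is circular as a justification. The clean argument is that $a=E_1(\lambda(a))=E_1(\lambda(a)\,\mathrm{id}_{\mathfrak A})=a\,E_1(\mathrm{id}_{\mathfrak A})$ for every $a\in\mA$. Since $E_1(\mathrm{id}_{\mathfrak A})\in\mA$ is self-adjoint, it is a two-sided unit of $\mA$; this is exactly the fact from \cite{STR} that the paper cites. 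Then $\lambda(E_1(\mathrm{id}_{\mathfrak A}))=\mathrm{id}_{\mathfrak A}$ because $\mathfrak A=\eta(\mA)$. This makes the detour through $1_0$ and $\Ind_W(E)$ unnecessary.
\end{enumerate}
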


\begin{proof}
  Since $E$ has a quasi-basis, it follows from
  \Cref{watatani-index-characterization} that it has {finite
    Watatani index} and that $\mathrm{id}_{\mathfrak{A}} \in
  C^*_r\langle \mA, e_E\rangle$; so, $C^*_r\langle \mA, e_E\rangle$ is
  a unital $C^*$-algebra. Also, by the preceding theorem, there exists
  a (dual) conditional expectation $E_1 : C^*_r\langle \mA, e_E\rangle
  \to \mA$ with finite Watatani index.  Thus, it follows from the
  Proposition on Page 118 of \cite{STR} that $\mA$ is unital (in fact,
  $1_\mA$:= $E_1(\mathrm{id}_{\mathfrak{A}})$ works as the identity of
  $\mA$).  Since $E_1$ has finite Watatani index, it follows from
  \Cref{original-W-index} that it has a quasi-basis.
\end{proof}

\begin{remark}
With notations as in \Cref{dual-ce-izumi}, iterating the process of
$C^*$-basic construction and taking dual conditional expectations (as
above), one obtains a tower of $C^*$-algebras
\[
\mB \subset \mA \subset \mA_1 \subset \mA_2 \subset \cdots \subset
\mA_k \subset \cdots,
\]
where, for $k \geq 0$, $\mA_{k+1}:= C^*\la \mA_k, e_{k+1} \ra$ and
$e_{k+1}\in \mL_{\mA_{k-1}}( \mA_{k})$ is the Jones projection
corresponding to the finite-index (dual) conditional expectation $E_k: \mA_k
\to \mA_{k-1}$, with $\mA_{-1}:=\mB$, $\mA_0:= \mA$,  $E_0:=E$  and  $e_1 :=
e_E$.
\end{remark}

\section {Reduced $W^*$-basic construction}\label{W*-basic-construction}

Throughout this section, $\mN \subset \mM$ will denote a fixed
 inclusion of $W^*$-algebras with common identity and $E : \mM \to
 \mN$ a (fixed) faithful normal conditional expectation. Thus, as in
 \Cref{basic-construction}, $\mM$ becomes a (right) pre-Hilbert
 $C^*$-module over $\mN$ with respect to the natural $\mN$-valued
 inner product induced by $E$.  Let $\mathfrak{M}$ denote the
 self-dual completion of $\mM$ (see \Cref{self-dual-completion}) and
 $\eta_E:\mM \to \mathfrak{M}$ denote the canonical embedding. Thus,
 $\mathfrak{M}$ is a (right) Hilbert $W^*$-module over $\mN$; and,
 $\mL_\mN(\mathfrak{M})$ is a $W^*$-algebra, by
 \Cref{self-dual-facts}. The following fundamental facts will be
 crucial for our discussions ahead:

 \begin{theorem} [{\cite[Prop. 2.8  $ \& $ Thm. 3.2]{BDH}}]\( \)
   
    \begin{enumerate}
     \item There exists an injective (normal) $*$-homomorphism $\lambda_E:\mM
       \to \mL_\mN(\mathfrak{M})$ satisfying \(
       \lambda_E(x)(\eta_E(z)) = \eta_E(xz)$ for all $x, z \in \mM \). (Thus, 
$\lambda_E$ maps $\mM$ onto a $W^*$-subalgebra of
$\mL_\mN(\mathfrak{M})$.)
\item There exists a projection $e_E \in \mL_\mN(\mathfrak{M})$
  satisfying $e_E(\eta_E(x)) = \eta_E(E(x))$ for all $x \in \mM$.
\item The central support of $e_E$ in $\mL_\mN(\mathfrak{M})$ is $1$.
  \item  $\lambda_E(\mN) = \{e_E\}'\cap \lambda_E(\mM)$ in $\mL_\mN(\mathfrak{M})$. 
              \end{enumerate}
\end{theorem}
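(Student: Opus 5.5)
We take the four items in order, working throughout in the self-dual completion $\mathfrak{M}$. The main tool is Paschke's extension principle: every bounded $\mN$-module map on the pre-Hilbert module $\eta_E(\mM)$ extends uniquely to a bounded module map on $\mathfrak{M}$ with the same norm (\cite[Prop. 3.6]{Pas}). By \Cref{self-dual-facts}(1), such an extension is automatically adjointable.

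\textbf{Items (1) and (2).} Fix $x\in\mM$ and consider $z\mapsto xz$ on $\mM$. We have
\[
E(z^*x^*xz)\le \|x\|^2E(z^*z)
\]
by positivity of $E$, so left multiplication by $x$ is a bounded $\mN$-linear map on $\eta_E(\mM)$. Let $\lambda_E(x)\in\mL_\mN(\mathfrak{M})$ be its extension.
\begin{itemize}
\item Multiplicativity and $*$-preservation hold on the $w^*$-dense submodule $\eta_E(\mM)$, and pass to $\mathfrak{M}$ by uniqueness of extensions.
\item Injectivity follows from faithfulness of $E$: if $\lambda_E(x)=0$, then $\eta_E(x)=\lambda_E(x)\eta_E(1)=0$, so $E(x^*x)=0$ and hence $x=0$.
\item For the Jones projection, $z\mapsto E(z)$ satisfies $E(z)^*E(z)\le E(z^*z)$ (Kadison--Schwarz). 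It is $\mN$-linear, idempotent and self-adjoint on $\eta_E(\mM)$, since $E(x^*E(y))=E(x)^*E(y)=E(E(x)^*y)$. Its extension $e_E$ is therefore a projection.
\end{itemize}

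\textbf{Normality of $\lambda_E$.} This is the first real obstacle. By \Cref{self-dual-facts}(5) and (3), it suffices to show the following: if $x_\alpha\to x$ is a bounded $\sigma$-weakly convergent net, then
\[
\varphi\bigl(\langle \lambda_E(x_\alpha)\xi,\zeta\rangle\bigr)\to\varphi\bigl(\langle\lambda_E(x)\xi,\zeta\rangle\bigr)
\]
for all $\xi,\zeta\in\mathfrak{M}$ and $\varphi\in\mN_*$.
\begin{itemize}
\item For $\xi=\eta_E(z)$ and $\zeta=\eta_E(w)$, the quantity is $\varphi(E(w^*x_\alpha z))$. This converges because $E$ is normal.
\item For general $\xi$ and $\zeta$, I would use boundedness of the net together with $w^*$-density of $\eta_E(\mM)$. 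An $\epsilon/3$ argument needs some uniformity, and I would try to obtain it from Paschke's description of the $w^*$-topology.
\item Alternatively, one can realise $\mathfrak{M}$ concretely: represent $\mN$ on $H$, form $\mathfrak{M}\otimes_\mN H=L^2(\mM,E)\otimes\ldots$, and recall that $\mL_\mN(\mathfrak{M})$ is isomorphic to the commutant of $\mN$ on the induced Hilbert space. On that space $\lambda_E$ is the normal GNS-type representation induced from the faithful normal $E$.
\end{itemize}
I regard the concrete route as the cleaner way through this step.

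\textbf{Item (4).} The inclusion $\lambda_E(\mN)\subseteq\{e_E\}'$ is checked on $\eta_E(\mM)$ via the bimodule property $E(bz)=bE(z)$. For the converse, suppose $e_E\lambda_E(a)=\lambda_E(a)e_E$ and evaluate both sides at $\eta_E(1)$. This gives $\eta_E(E(a))=\eta_E(a)$, so $a=E(a)\in\mN$ by injectivity of $\eta_E$. This is the same argument as \Cref{bc-facts}(4), simplified because the algebras are unital.

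\textbf{Item (3).} Let $z$ be the central support of $e_E$ in $\mL_\mN(\mathfrak{M})$. Then $z\ge\lambda_E(x)e_E\lambda_E(x)^*=\theta_{\eta_E(x),\eta_E(x)}$ for all $x\in\mM$. Hence $z$ dominates the $\sigma$-weak closure of the span of the $\theta_{\eta_E(x),\eta_E(y)}$. Since $\eta_E(\mM)$ is $w^*$-dense in $\mathfrak{M}$, that closure contains all $\theta_{\xi,\zeta}$ for $\xi,\zeta\in\mathfrak{M}$; this uses $w^*$-continuity of $\xi\mapsto\theta_{\xi,\zeta}$ in the $\sigma$-weak topology, again via \Cref{self-dual-facts}(5). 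By \Cref{W*-facts-1}, these $\theta_{\xi,\zeta}$ generate $\mathrm{id}$ $\sigma$-weakly, so $z=1$. The continuity of $\theta$ in its first variable is the secondary technical point.
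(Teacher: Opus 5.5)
The paper does not prove this theorem; it is quoted from \cite{BDH}. Your argument is therefore a genuinely different, self-contained reconstruction from Paschke's extension and duality results, and most of it is sound. The following parts are all fine:
\begin{itemize}
\item building $\lambda_E(x)$ and $e_E$ by extending the adjointable maps $z\mapsto xz$ and $z\mapsto E(z)$ from $\eta_E(\mM)$ to $\mathfrak{M}$;
\item injectivity via $\eta_E(1)$;
\item item (4) via evaluation at $\eta_E(1)$.
\end{itemize}
For $\ast$-preservation you need that extension commutes with adjoints, not merely that extensions are unique, but this follows by the same density argument.

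Your proof of (3) works once you add the Krein--Smulian theorem. \Cref{self-dual-facts}(5) only controls bounded nets, and $w^*$-density alone does not supply bounded approximants from $\eta_E(\mM)$. With that addition it proves more than asked. It places every $\theta_{\xi,\zeta}$ in the $\sigma$-weak closure of $\mathrm{span}\{\lambda_E(x)e_E\lambda_E(y)\}$, and these span a $\sigma$-weakly dense subspace by \Cref{W*-facts-1}. So it gives $W^*_r\la\mM,e_E\ra=\mL_\mN(\mathfrak{M})$ with no index hypothesis, whereas the paper records this only when $\Ind_\p(E)<\infty$. If you only want $c(e_E)=1$, there is a one-line route. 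A central projection $q$ with $qe_E=0$ satisfies $q\eta_E(x)=q\lambda_E(x)e_E\eta_E(1)=\lambda_E(x)qe_E\eta_E(1)=0$, so $q=0$ by uniqueness of extensions.

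The step you have not actually proved is normality of $\lambda_E$, which is exactly what makes $\lambda_E(\mM)$ a $W^*$-subalgebra. Route (a) is not yet an argument: $w^*$-density gives no norm approximation, so an $\epsilon/3$ estimate has nothing to work with. Route (b) is viable but imports Rieffel's identification of $\mL_\mN(\mathfrak{M})$ with the commutant of the induced action of $\mN'$ (not of $\mN$, as you wrote) on $\mathfrak{M}\otimes_\mN H$.

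Both can be bypassed by testing only against $\eta_E(\mM)$ along monotone nets. Let $x_\alpha\nearrow x$ be a bounded increasing net in $\mM_+$, and put $T:=\sup_\alpha\lambda_E(x_\alpha)\le\lambda_E(x)$. Since $\lambda_E(x_\alpha)\to T$ $\sigma$-weakly, \Cref{self-dual-facts}(5) and (3), together with normality of $E$, give
\[
\varphi\bigl(\la T\eta_E(z),\eta_E(z)\ra_\mN\bigr)=\lim_\alpha\varphi\bigl(E(z^*x_\alpha z)\bigr)=\varphi\bigl(E(z^*xz)\bigr)
\]
for all $\varphi\in\mN_*$ and $z\in\mM$. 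Hence $\la(\lambda_E(x)-T)\eta_E(z),\eta_E(z)\ra_\mN=0$. With $S:=(\lambda_E(x)-T)^{1/2}$ this says $\la S\eta_E(z),S\eta_E(z)\ra_\mN=0$, so $S$ vanishes on $\eta_E(\mM)$. Uniqueness of extensions then gives $S=0$. Thus $\lambda_E$ preserves suprema of bounded increasing nets, i.e.\ it is normal, and being injective it has $\sigma$-weakly closed range.
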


In view of the preceding facts, the following analogue of Jones'
notion of basic construction (from \cite{jones}) is very natural - also
see \cite{Kosaki, BDH, watatani, popa}.

\begin{definition}
The reduced $W^*$-basic construction of the inclusion $\mN \subset
  \mM$ (with respect to the conditional expectation $E : \mM \to \mN$)
  is  defined as the  $W^*$-subalgebra of  $\mL_\mN(\mathfrak{M})$ 
  generated by $\lambda(\mM)e_E\lambda(\mM)$ and will be denoted by $
  W^*_r\la \mM, e_E\ra$, i.e.,
  \[
  W^*_r\la \mM, e_E \ra := \overline{\mathrm{span}\{ \lambda(x) e_E
    \lambda(y) : x,y \in \mM\}}^{\sigma\mathrm{\text{-}weak}}
  \subseteq \mL_\mN(\mathfrak{M}).
  \]
  \end{definition}

  \begin{remark}
     When $\Ind_\p(E) <\infty$, Popa considers (in \cite[$\S
       1.1.3$]{popa}) the standard representation $\mM \subseteq
     B(\mH)$ of $\mM$ with the projection $e\in B(\mH)$ satisfying
     $exe=E(x)e$ for all $x\in \mM$ and $\overline{\mathrm{span}}(\mM
     e \mH) = \mH$, and goes on to define the basic construction of
     the inclusion $\mN \stackrel{E}{\subset} \mM$ as the von Neumann
     algebra $\{\mM \cup \{e\}\}''$ in $B(\mH)$.

     In fact, people also work with a more general notion of the
     basic construction of an inclusion $\mN \subset \mM$ without any
     normal conditional expectation from $\mM$ onto $\mN$ - see
     \cite[$\S 2.2$]{BMO}.

     However, keeping in mind our requirements and the obvious
     convenience of working with Hilbert $W^*$-modules, we prefer the
     Hilbert module approach, which was employed quite satisfactorily in
     \cite{BDH} and \cite{watatani}.
    \end{remark}

  \begin{remark}
In view of \Cref{compact-operators}, it follows that $W^*_r\la \mM, e_E\ra$ is a
$\sigma$-weakly closed two-sided ideal in $\mL_\mN(\mathfrak{M})$.
\end{remark}

  \begin{theorem}[\cite{Pas, BDH,  FK}]  \label{FiniteE}
The following statements are equivalent:
  \begin{enumerate}
  \item $\Ind_\p(E) <
    \infty$.
  \item $\Ind_\cp(E) < \infty$.\hfill (\cite{FK} - see \Cref{ce-facts} above)
  \item $\mM$ is a self-dual Hilbert $C^*$-module over $\mN$ (thus,
    $\mathfrak{M} =\eta_E(\mM)$).\hfill (\cite[Prop. 3.3]{BDH})

\item There exists a family $\{m_j\}$ in $\mM$ such that $\{\eta_E
  (m_j)\}$ is an orthonormal basis for the Hilbert $W^*$-module
  $\mathfrak{M}$ (over $\mN$) and the family $\{m_j m_j^*\}$ is
  $\sigma$-weakly unconditionally summable in $\mM$.\hfill
  (\cite[Thm. 3.12]{Pas},\cite[Thm. 3.5]{BDH})
  \end{enumerate}
In particular, if $\Ind_\p (E) < \infty$, then $\mM$ is
    a Hilbert $W^*$-module over $\mN$ and $\mathcal{L}_{\mN}(\mM)$ is a
  $W^*$-algebra.
  \end{theorem}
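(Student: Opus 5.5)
We prove the equivalences through the cycle (1)$\Leftrightarrow$(2), (1)$\Rightarrow$(3)$\Rightarrow$(4)$\Rightarrow$(1). The equivalence (1)$\Leftrightarrow$(2) is exactly \cite[Thm. 1]{FK}, as recorded in \Cref{ce-facts}(2), so nothing new is needed there.

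\emph{(1)$\Rightarrow$(3).} Suppose $\gamma E - \mathrm{id}_\mM \geq 0$. Then for every $x \in \mM$ we get $x^*x \leq \gamma E(x^*x)$, and hence
\[
\|x\|^2 \leq \gamma \|E(x^*x)\| = \gamma \|\eta_E(x)\|^2 .
\]
So the module norm is equivalent to the $C^*$-norm on $\mM$, and $\mM$ is already a Hilbert $C^*$-module over $\mN$. For self-duality, let $\phi:\mM\to\mN$ be a bounded $\mN$-module map. The plan is to produce $z\in\mM$ with $\phi = \hat z$ in three steps.
\begin{enumerate}
\item Take the self-dual completion $\mathfrak{M}$ and an orthonormal basis $\{\xi_j\}$ of it (\Cref{W*-facts-1}).
\item Show that the $\xi_j$ can be chosen of the form $\eta_E(m_j)$ via a Zorn argument on maximal families of orthogonal partial isometries in $\eta_E(\mM)$. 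Maximality in $\mathfrak{M}$ should follow from $w^*$-density of $\eta_E(\mM)$ combined with the norm equivalence above.
\item Set $z = \sum_j m_j \phi(m_j)^*$ and use the norm equivalence to show this sum converges $\sigma$-weakly in $\mM$.
\end{enumerate}
This is essentially \cite[Prop. 3.3]{BDH}, and I would follow it.

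\emph{(3)$\Rightarrow$(4).} By (3), $\mathfrak{M} = \eta_E(\mM)$, so any orthonormal basis from \Cref{W*-facts-1} already has the form $\{\eta_E(m_j)\}$. For summability of $\{m_jm_j^*\}$, first pass to the projection identity $\sum_j \theta_{\eta(m_j),\eta(m_j)} = \mathrm{id}$ in $\mL_\mN(\mM)$. Then bound the finite partial sums as follows. For finite $F$ and $x\in\mM$, the Pimsner--Popa type computation
\[
\Big\|\sum_{j\in F} m_j E(m_j^* x)\Big\|_{\mN\text{-mod}} \leq \|\eta(x)\|
\]
together with the norm equivalence gives, after taking $x$ in the unit ball and using the operator-valued weight $\widehat{E}$ of \Cref{crucial-facts-2}(3), that $\sum_{j\in F} m_j m_j^* = \widehat{E}(\sum_F \theta_{m_j,m_j}) \leq \widehat{E}(1)$. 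This partial-sum net is increasing and bounded, so it converges $\sigma$-weakly. This is the content of \cite[Thm. 3.5]{BDH}.

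\emph{(4)$\Rightarrow$(1).} Set $\gamma' := \|\sum_j m_jm_j^*\|$. For $x\in\mM$ positive, we have $x^{1/2} = \sum_j m_j E(m_j^* x^{1/2})$ with $w^*$-convergence. A Cauchy--Schwarz estimate, in the operator form $(\sum a_ib_i)^*(\sum a_ib_i) \leq \|\sum a_ia_i^*\| \sum b_i^*b_i$, then gives
\[
x \leq \gamma' \sum_j E(x^{1/2}m_j)E(m_j^*x^{1/2}) = \gamma' \la \eta(x^{1/2}), \eta(x^{1/2})\ra = \gamma' E(x),
\]
where the middle equality is Parseval for the basis. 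Hence $\Ind_\p(E) \leq \gamma'$.

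The final claim follows because (3) makes $\mM$ self-dual, and then \Cref{self-dual-facts}(4) shows $\mL_\mN(\mM)$ is a $W^*$-algebra.

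The main obstacle is the self-duality step in (1)$\Rightarrow$(3): showing that the $\sigma$-weak limit $z$ lies in $\mM$ and represents $\phi$. This needs normality of $E$ and a careful passage between $w^*$-convergence in $\mathfrak{M}$ and $\sigma$-weak convergence in $\mM$, for which \Cref{paschke-convergence} is the tool I would try first.
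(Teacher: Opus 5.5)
The paper gives no proof of its own here; it assembles the theorem from \cite[Thm.~1]{FK}, \cite[Prop.~3.3]{BDH}, \cite[Thm.~3.12]{Pas} and \cite[Thm.~3.5]{BDH}. Your cycle (1)$\Rightarrow$(3)$\Rightarrow$(4)$\Rightarrow$(1) would be a more self-contained substitute, but it does not close at (3)$\Rightarrow$(4).

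\emph{The gap in (3)$\Rightarrow$(4).} The bound $\sum_{j\in F}m_jm_j^*=\widehat{E}\big(\sum_{j\in F}\theta_{m_j,m_j}\big)\le\widehat{E}(1)$ says nothing unless $\widehat{E}(1)$ is a bounded element of $\mM$. Since $\sum_j m_je_Em_j^*=1$, normality gives $\widehat{E}(1)=\sum_jm_jm_j^*$ in $\widehat{\mM}_+$. So boundedness of $\widehat{E}$ \emph{is} the summability you are trying to prove.

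The result you invoke is \Cref{crucial-facts-2}(3). It concerns $E^{**}$ for a $C^*$-inclusion; the $W^*$-version is \Cref{more-finite-E}(1), i.e.\ \cite[Thm.~3.5]{BDH}. Either way it assumes $\Ind_{\p}(E)<\infty$, which is (1) and is not available at that point of the cycle. Likewise, your ``norm equivalence'' was derived from (1).

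Your displayed estimate does not fill the hole. It bounds the maps $x\mapsto\sum_{j\in F}m_jE(m_j^*x)$, not the row $(m_j)_{j\in F}$. Moreover, $\sum_jm_jm_j^*$ is governed by $\Ind_{\cp}(E)$, not by the scalar constant. For $\C\subset M_n$ with the normalised trace, $\Ind_{\p}(E)=n$, while the orthonormal basis $\{\sqrt{n}\,e_{ik}\}$ gives $\sum_{i,k}m_{ik}m_{ik}^*=n^2=\Ind_{\cp}(E)$.

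\emph{A correct route}, short of citing \cite[Prop.~3.3]{BDH} in both directions as the paper effectively does:
\begin{enumerate}
\item By (3), $\mM$ is complete for $\|E(x^*x)\|^{1/2}\le\|x\|$, so the open mapping theorem gives $\|x\|^2\le\gamma\|E(x^*x)\|$.
\item Applying this to $x\,(E(x^*x)+\epsilon)^{-1/2}$ yields $x^*x\le\gamma\,(E(x^*x)+\epsilon)$ for every $\epsilon>0$, hence (1).
\item By \cite{FK}, $\gamma_{\cp}:=\Ind_{\cp}(E)<\infty$.
\item Let $X\in M_k(\mM)$, $k=|F|$, have first row $(m_j)_{j\in F}$ and all other rows zero. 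Applying $\gamma_{\cp}E^{(k)}-\mathrm{id}\ge0$ gives $X^*X=[m_i^*m_j]_{i,j\in F}\le\gamma_{\cp}\,\mathrm{diag}\big(E(m_j^*m_j)\big)\le\gamma_{\cp}$. Hence $\|\sum_{j\in F}m_jm_j^*\|=\|XX^*\|\le\gamma_{\cp}$.
\end{enumerate}

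\emph{Two smaller points.} In (1)$\Rightarrow$(3) the obstacle is not where you put it: $\sigma$-weak limits of bounded nets in $\mM$ lie in $\mM$ trivially. What your Step~2 needs, and what $w^*$-density plus norm equivalence do not give by themselves, is this: if $(x_\alpha)$ is bounded in $\mM$ and $\eta(x_\alpha)\to\zeta$ in the $w^*$-topology of $\mathfrak{M}$, then $\zeta\in\eta(\mM)$. This uses normality of $E$. For a $\sigma$-weak cluster point $x$ of $(x_\alpha)$, one gets $\la\eta(y),\eta(x)\ra_{\mN}=\lim E(y^*x_\alpha)=\la\eta(y),\zeta\ra_{\mN}$ along a subnet for all $y\in\mM$, and $\eta(\mM)$ separates the points of $\mathfrak{M}$.
\begin{itemize}
\item Applied to the partial sums $\sum_{j\in F}m_jE(m_j^*x)$, which are bounded by norm equivalence, it gives $(1-P)\eta(\mM)\subseteq\eta(\mM)$ for $P=\sum_j\theta_{\eta(m_j),\eta(m_j)}$.
\item Applied to $x'\,(E(x'^*x')+1/n)^{-1/2}$, where $\eta(x')=(1-P)\eta(x)\ne0$, it places the polar part of $\eta(x')$ inside $\eta(\mM)$. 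That polar part is a nonzero partial isometry orthogonal to every $\eta(m_j)$, which is what forces $P=1$ for a maximal family.
\end{itemize}

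Finally, in (4)$\Rightarrow$(1) the expansion $x^{1/2}=\sum_jm_jE(m_j^*x^{1/2})$ is \Cref{convergence-in-M}, which is stated under (1). You should say explicitly that its proof only needs norm-boundedness of $\sum_{j\in F}m_jm_j^*$, which (4) supplies. You should also note that your Cauchy--Schwarz bound survives the $\sigma$-weak limit: if $y_\alpha\to y$ weakly and $y_\alpha^*y_\alpha\le c$ for all $\alpha$, then $y^*y\le c$. With these repairs, your (4)$\Rightarrow$(1) is a correct addition to what the paper records.
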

  (Notice that Item (4)  is comparable with  \cite[Thm. 1.1.6]{popa}.)

  \begin{remark}  \label{eta-bi-continuous} \label{eta-w*-continuous}
    \begin{enumerate}
      \item
 Clearly, $\eta_E$ is a linear contraction. And, $\eta_E: \mM \to
 \eta_E(\mM)$ is bi-continuous with respect to the norms if and only
 if $\Ind_\p(E)< \infty$ - see \cite[Prop. 3.3]{BDH}.
  \item If $\Ind_p(E) < \infty$, then $\eta : \mM \to
 \eta_E(\mM)$ is
  ($\sigma$-weak, $w^*$)-continuous as well, where the
  $w^*$-topology on $\eta_E(\mM)$ is induced by the duality mentioned
  in \Cref{self-dual-facts}  - see \cite[Cor. 8.5.8]{BM}.
    \end{enumerate}
  \end{remark}
A priori, it is not clear   whether $W^*_r\la \mM,
 e_E\ra$ always equals $\mL_\mN(\mathfrak{M})$ or not.  However, when
$\Ind_\p(E)<\infty$, then it is known (from \cite{BDH}) that the
equality holds.

  \begin{theorem}\cite{BDH}\label{more-finite-E}
 If $\Ind_{\p}(E) < \infty$, then identifying $\mM$ with
 $\lambda_E(\mM)$, the following hold:
 \begin{enumerate}
   \item There exists a bounded normal operator
    valued weight $\widehat{E}$ from $ \mL_\mN(\mM)$ to $\mM$
    satisfying $\widehat{E}(x e_E x^*) = xx^*$
    for all $x\in \mM$. \hfill  (\cite[Thm. 3.5]{BDH})

  \item For any family $\{m_j\}$ in $\mM$ such that $\{\eta_E(m_j)\}$
    is an orthonormal basis for the Hilbert $W^*$-module $\eta_E(\mM)$
    over $\mM$,
    \begin{enumerate}
\item 
\( \sum_j m_j e_E m_j^* = 1\) ($\sigma$-weakly) in
$\mL_\mN(\mM)$; and, \hfill (\cite[ Rem. 3.4(iii)]{BDH})
    \item $\sum_j m_j m_j^* = \widehat{E} (1)$ ($\sigma$-weakly) in
      $\mM$ and the sum is, therefore, independent of the chosen
      orthonormal basis and is a positive invertible element in
      $\mZ(\mM)$.\\ (\cite[Thm. 3.5]{BDH}).
   \end{enumerate}
 \end{enumerate}
In particular, $W^*_r\la \mM, e_E\ra = \mL_\mN(\mM)$ and there exists
a dual normal conditional expectation $E_1 : \mathcal{L}_{\mN}(\mM)
\to \mM$ with $\Ind_\p(E_1)< \infty$ and satisfying $E_1(x e_E y)=
\widehat{E}(1)^{-1}xy$ for all $x, y \in \mM$.  \hfill (\cite[$\S
  3.10$]{BDH})
\end{theorem}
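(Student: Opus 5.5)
The plan is to build everything from the orthonormal basis supplied by \Cref{FiniteE}(4). Fix $\{m_j\}\subset\mM$ such that $\{\eta_E(m_j)\}$ is an orthonormal basis of the Hilbert $W^*$-module $\mM$ over $\mN$ (identified with $\eta_E(\mM)$ via \Cref{FiniteE}(3)), with $\sum_j m_jm_j^*$ $\sigma$-weakly convergent. Put $p_j:=E(m_j^*m_j)$; these are projections, and $\la m_j,m_k\ra_\mN=\delta_{jk}p_j$.

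\emph{Step 1: item (2)(a) and $W^*_r\la\mM,e_E\ra=\mL_\mN(\mM)$.} For $x,y\in\mM$ one has $\lambda(x)e_E\lambda(y)^*=\theta_{\eta(x),\eta(y)}$. Hence $\sum_j m_je_Em_j^*=\sum_j\theta_{\eta(m_j),\eta(m_j)}=1$ $\sigma$-weakly, by \Cref{W*-facts-1}. So the $\sigma$-weakly closed ideal $W^*_r\la\mM,e_E\ra$ contains $1$ and therefore equals $\mL_\mN(\mM)$.

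\emph{Step 2: constructing $\widehat E$.} For $T\in\mL_\mN(\mM)$ I use the fact that $e_E\mL_\mN(\mM)e_E=\lambda(\mN)e_E$. This holds because $e_ETe_E$ commutes with the compression and is determined by $b:=\la\eta(1),T\eta(1)\ra_\mN$. Define $b_{jk}(T)\in\mN$ by $e_Em_j^*Tm_ke_E=b_{jk}(T)e_E$, and set
\[
\widehat E(T):=\sum_{j,k} m_j\, b_{jk}(T)\, m_k^* .
\]
Convergence comes first. For $T\ge0$ the matrix $[b_{jk}(T)]$ is positive and dominated by $\|T\|\,[\delta_{jk}p_j]$. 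So the finite partial sums are bounded by $\|T\|\sum_j m_jp_jm_j^*=\|T\|\sum_jm_jm_j^*$, using $m_jp_j=m_j$. \Cref{paschke-convergence}, or a direct monotone-net argument in $\mM_+$, then gives $\sigma$-weak convergence and the bound $0\le\widehat E(T)\le\|T\|\,\widehat E(1)$.

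Next I evaluate $\widehat E$ on $xe_Ey$. Here $b_{jk}(xe_Ey)=E(m_j^*x)E(ym_k)$. The basis expansions $\sum_jm_jE(m_j^*x)=x$ and $\sum_kE(ym_k)m_k^*=y$ hold $w^*$-convergently, and are transported to $\sigma$-weak convergence in $\mM$ using \Cref{eta-w*-continuous}. This yields $\widehat E(xe_Ey)=xy$, which is item (1).

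Normality holds because each $T\mapsto m_jb_{jk}(T)m_k^*$ is normal and the sums are controlled uniformly by $\widehat E(1)$. The bimodule property $\widehat E(a^*Ta)=a^*\widehat E(T)a$ for $a\in\mM$ is clear on the span of the elements $xe_Ey$. It then extends to all of $\mL_\mN(\mM)$ by $\sigma$-weak density (Step 1) and normality. In particular, $\widehat E$ does not depend on the chosen basis.

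\emph{Step 3: item (2)(b).} Directly, $\widehat E(1)=\sum_jm_jp_jm_j^*=\sum_jm_jm_j^*$, and this is independent of the basis by Step 2. Bimodularity gives $a\widehat E(1)=\widehat E(a)=\widehat E(1)a$ for $a\in\mM$, so $\widehat E(1)\in\mZ(\mM)$. Positivity of $\widehat E$ and $e_E\le1$ give $\widehat E(1)\ge\widehat E(e_E)=1$, so $\widehat E(1)$ is invertible.

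\emph{Step 4: the dual expectation.} Set $E_1:=\widehat E(1)^{-1}\widehat E$. Since $\widehat E(1)$ is central and invertible, $E_1$ is a normal, unital, positive $\mM$-bimodule map, hence a normal conditional expectation. It satisfies $E_1(xe_Ey)=\widehat E(1)^{-1}xy$.

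The step I expect to be the main obstacle is $\Ind_\p(E_1)<\infty$, that is, finding $c>0$ with $E_1(T)\ge cT$ for $T\ge0$. My first attempt is a Pimsner--Popa type estimate run in reverse. Write $T^{1/2}=\sum_j T^{1/2}m_je_Em_j^*$ and use $e_Em_j^*Tm_ke_E=b_{jk}e_E$ to bound $T$ above by a multiple of $\widehat E(T)$. The constant should come from $\Ind_\cp(E)$ (via \Cref{ce-facts}(2)), applied to the positive matrix $[b_{jk}]$ and then compressed by the row $(m_j)$. If this fails, the fallback is to cite \cite[\S3.10]{BDH}, which establishes $\Ind_\p(E_1)<\infty$ in exactly this setting.
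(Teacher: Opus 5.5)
The paper gives no proof here (the statement is quoted from \cite{BDH}), so your proposal has to stand on its own. Step~1 is correct. Steps~3--4 are correct once a positive, normal, $\mM$-bimodular $\widehat E$ with $\widehat E(xe_Ey)=xy$ is available. The gap is Step~2, which is the whole substance of \cite[Thm.~3.5]{BDH}: none of its three analytic claims is justified by what you say.

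(i) \emph{Convergence.} For $T\ge 0$ the net $S\mapsto\sum_{j,k\in S}m_jb_{jk}(T)m_k^*$, $S\in\mF(J)$, is bounded but not increasing. Enlarging $S$ to $S'$ replaces $B_S\oplus 0$ by $B_{S'}$, and $B_{S'}-B_S\oplus 0$ need not be positive (take $B_{S'}=\left(\begin{smallmatrix}1&1\\1&1\end{smallmatrix}\right)$). A bounded non-monotone net need not converge. Nor does \Cref{paschke-convergence} apply: it concerns a single family indexed by $I$ whose partial sums over \emph{every} finite subset of $I$ are bounded, whereas on $J\times J$ you have only bounded the sums over squares $S\times S$.

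(ii) \emph{The identity $\widehat E(xe_Ey)=xy$.} Your square partial sum is a product $a_Sb_S$, with $a_S=\sum_{j\in S}m_jE(m_j^*x)$ and $b_S=\sum_{k\in S}E(ym_k)m_k^*$. Multiplication is not jointly $\sigma$-weakly continuous, so $a_S\to x$ and $b_S\to y$ $\sigma$-weakly do not give $a_Sb_S\to xy$. (Also, the $\sigma$-weak convergence of $a_S$ is \Cref{convergence-in-M}; \Cref{eta-w*-continuous} goes in the opposite direction.)

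(iii) \emph{Normality.} ``Each term is normal and the sums are uniformly controlled'' is an unjustified interchange of two limits.

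Relatedly, (2)(b) is claimed for \emph{every} orthonormal basis, but your construction presupposes that $\sum_jm_jm_j^*$ converges, and \Cref{FiniteE}(4) guarantees this only for some basis. For an arbitrary basis you need $\sum_{j\in S}m_jm_j^*=\widehat E(Q_S)\nearrow\widehat E(1)$, where $Q_S:=\sum_{j\in S}m_je_Em_j^*$; that is normality again.

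A repair must avoid the non-monotone square sums. One option is the increasing net $T^{1/2}Q_ST^{1/2}=\sum_{j\in S}t_je_Et_j^*$, with $\eta(t_j):=T^{1/2}\eta(m_j)$, setting $\widehat E(T):=\sum_jt_jt_j^*$. Your matrix estimate, applied to $T^{1/2}Q_LT^{1/2}\le\|T\|$ on finite truncations together with weak lower semicontinuity, bounds this sum by $\|T\|\sum_lm_lm_l^*$. But additivity, normality and $\widehat E(xe_Ex^*)=xx^*$ still have to be proved. That is exactly what \cite{BDH} does, so either carry it out or cite it as the paper does.

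On the other hand, the step you flag as the main obstacle is easy once such an $\widehat E$ is in hand, since your square sums are then $\widehat E(Q_STQ_S)\to\widehat E(T)$. First, $Q_STQ_S=\sum_{j,k\in S}m_jb_{jk}(T)e_Em_k^*$. Since $e_E$ commutes with $\lambda(\mN)$, in $M_S(\mL_\mN(\mM))$ one has $[b_{jk}(T)e_E]_{j,k\in S}=B_S^{1/2}D_SB_S^{1/2}\le B_S$, where $D_S:=\mathrm{diag}(e_E,\dots,e_E)$. Compressing by the row $(m_j)_{j\in S}$ and letting $S\nearrow J$ gives $T\le\widehat E(T)$. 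Hence $E_1(T)\ge\|\widehat E(1)\|^{-1}T$ for $T\ge0$, with no appeal to $\Ind_\cp(E)$.
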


\begin{definition}\cite[Defn. 3.6]{BDH} Let $\Ind_{\p}(E) <\infty$.
\begin{enumerate}
\item The BDH-index of $E$ is defined as $\mathrm{Ind}_{0}(E) =
  \widehat{E}(1)$, which is a positive invertible element in
  $\mZ(\mM)$. 
\item $E$ is said to have strongly finite index
if there exists a finite orthonormal basis for the Hilbert
$W^*$-module $\eta(\mM)$ over $\mM$.
\end{enumerate}
\end{definition}
\noindent (Note: What we call the BDH-index of $E$ is simply called
the index of $E$ in \cite{BDH}.)

\begin{remark}
 If $\Ind_{\p}(E) < \infty$, then $W^*_r\la \mM, e_E\ra=\{
 \lambda(\mM) \cup \{e_E\}\}'' = \mL_\mN(\mM)$ and, as mentioned in
 \cite[$\S$ 3.10]{BDH}, the reduced $W^*$-basic construction for the
 inclusion $\mN \subset \mM$ can be iterated to obtain a tower of
 $W^*$-algebras
 \[
 \mN \subset \mM \subset \mM_1 \subset \cdots \subset \mM_k \subset
 \cdots,
 \]
with (dual) conditional expectations $E_k: \mM_k\to \mM_{k-1}$, $k
\geq 1$ satisfying $\Ind_\p(E_k)<\infty$ and $E_k(xe_{E_{k-1}}y) =
\Ind_0(E_{k-1})^{-1} xy$ for all $x,y\in \mM_{k-1}$, where
$\mM_{-1}:=\mN$, $\mM_0:=\mM$, $E_0:=E$ and $\mM_{k+1}:=
\mL_{\mM_{k-1}}(\mM_k) = \{\lambda(\mM_{k}) \cup\{ e_{E_k}\}\}''$ with
respect to the conditional expectation $E_k:\mM_k \to \mM_{k-1}$, for
all $k \geq 0$.
  \end{remark}
 
\begin{remark}\label{W*-b-c-finite-quasi-basis}
 If $E$ has a finite quasi-basis, then $\Ind_{\p}(E) < \infty$ and
 \[
 W^*_r\la \mM, e_E\ra =  \mathrm{span}\{
 \lambda(x) e_E \lambda(y): x, y \in \mM\} =  \mL_\mN(\mM)
  = C^*_r\la \mM, e_E\ra, \]
by \Cref{quasi-basis-facts}.
\end{remark}

 \subsection{Generalized quasi-bases}

Recall that, in \cite[$\S$ 1.1.4]{popa}, Popa
 discusses orthonormal bases for normal conditional expectations,
 wherein he uses the Bures topology for the convergence of the series
 involved in the definition. A well-known and desirable feature of
 Popa's notion was that for a crossed-product von Neumann algebra $\mM
 \rtimes G$ (with respect to an action of an infinite discrete group
 $G$ on $\mM$), one has $x = \sum_{g \in G} g E(g^*x)$ in the Bures
 topology (and not in the $\sigma$-weak topology) - see
 \cite{mercer}.

However, we noticed that a similar (not necessarily orthogonal) notion
with respect to the $\sigma$-weak topology fits better in the present
context.  Given it's utility, inspired by Watatani, we  
call it a `generalized quasi-basis'.
\begin{definition}
A family $\{m_j\}$ in $\mM$ will be called a generalized (right)
quasi-basis for $E$ if
\[
x =\sum_j^{\sigma\text{-}\mathrm{weak}} m_j E (m_j^*x)  \text{ for all } x \in
\mM.
\]
\end{definition}

The following fact is well-known to the experts - see, for instance,
\cite[Page 223]{Jolissant}. (It is comparable with Item (3) on
\cite[Page 4]{popa} as well.) Though, we could not trace an explicit
proof of the same in the literature. Thus, for the sake of
completeness, we provide a proof which depends on some useful
observations made in \cite{BDH}, \cite{Pas} and \cite{JSvnc}.

\begin{proposition} \cite{Jolissant}\label{convergence-in-M}
  If $\Ind_\p(E)< \infty$, then for every collection $\{m_j\}$ in
  $\mM$ such that $\{\eta(m_j)\}$ is an orthonormal basis for
  $\eta(\mM)$ over $\mN$, $\{m_j\}$ is a generalized quasi-basis for
  $E$.

  In particular, $E$ admits a generalized quasi-basis.
\end{proposition}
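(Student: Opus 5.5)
Since $\Ind_\p(E)<\infty$, \Cref{FiniteE} says that $\mM$ is itself a self-dual Hilbert $C^*$-module over $\mN$, so $\mathfrak{M}=\eta(\mM)$. Fix $\{m_j\}$ with $\{\eta(m_j)\}$ an orthonormal basis. By \Cref{W*-onb-defn}, for every $x\in\mM$ we have
\[
\eta(x)=\sum_j^{w^*}\eta(m_j)\la \eta(m_j),\eta(x)\ra_{\mN}=\sum_j^{w^*}\eta\big(m_jE(m_j^*x)\big)
\]
in $\eta(\mM)$, since $\eta$ is an $\mN$-module map and $\la\eta(m_j),\eta(x)\ra_\mN=E(m_j^*x)$. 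The task is therefore to transport this $w^*$-convergence in $\eta(\mM)$ back to $\sigma$-weak convergence in $\mM$.

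The plan has two steps.

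\textbf{Step 1: boundedness.} For $F\in\mF(I)$ set $x_F:=\sum_{j\in F}m_jE(m_j^*x)$. Then $\eta(x_F)=P_F\eta(x)$, where $P_F=\sum_{j\in F}\theta_{\eta(m_j),\eta(m_j)}$ is a projection in $\mL_\mN(\mM)$, because the partial isometries $\eta(m_j)$ are mutually orthogonal. Hence $\|\eta(x_F)\|\le\|\eta(x)\|$. By \Cref{eta-bi-continuous}(1), $\eta$ is bi-continuous when $\Ind_\p(E)<\infty$; indeed $E(y^*y)\ge\Ind_\p(E)^{-1}y^*y$ gives $\|y\|^2\le\Ind_\p(E)\|\eta(y)\|^2$. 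It follows that $\|x_F\|\le\Ind_\p(E)^{1/2}\|x\|$ uniformly in $F$, so the net $\{x_F\}$ is norm-bounded in $\mM$.

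\textbf{Step 2: identifying the limit.} I expect this to be the main obstacle. By Banach--Alaoglu, it suffices to show that every $\sigma$-weak cluster point $y$ of $\{x_F\}$ equals $x$. Given a subnet $x_{F_\alpha}\to y$ $\sigma$-weakly, \Cref{eta-w*-continuous}(2) says that $\eta$ is ($\sigma$-weak, $w^*$)-continuous, so $\eta(x_{F_\alpha})\to\eta(y)$ in the $w^*$-topology. On the other hand, $\eta(x_{F_\alpha})\to\eta(x)$ in the $w^*$-topology, since this is a subnet of the orthonormal basis expansion. The $w^*$-topology on the dual space $\eta(\mM)$ is Hausdorff, so $\eta(y)=\eta(x)$, and injectivity of $\eta$ ($E$ is faithful) gives $y=x$. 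Therefore the bounded net $x_F$ has the single cluster point $x$ and converges $\sigma$-weakly to it, which is exactly the generalized quasi-basis identity.

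If one prefers not to rely on the continuity statement quoted from \cite{BM}, the fallback is to check directly that $\varphi\circ E(z^*\,\cdot\,)$ is $\sigma$-weakly continuous for $\varphi\in\mN_*$ and $z\in\mM$, which holds because $E$ is normal. Then \Cref{self-dual-facts}(3) identifies the $w^*$-convergence of bounded nets with convergence of $\varphi(\la z,\cdot\ra)$, and it remains to see that functionals of the form $\varphi(E(z^*\,\cdot\,))$ separate points of $\mM$, which follows from faithfulness of $E$ (take $z=y-x$ and a faithful family of $\varphi$'s). Finally, existence of a generalized quasi-basis follows from \Cref{FiniteE}(4), which provides such a family $\{m_j\}$.
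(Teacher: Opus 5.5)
Your proof is correct. It shares the paper's two-stage skeleton: first show that the partial sums $x_F=\sum_{j\in F}m_jE(m_j^*x)$ are norm-bounded in $\mM$, then identify the $\sigma$-weak limit by testing against normal functionals built from $E$. Each stage, however, uses different tools.

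\textbf{Boundedness.} The paper regards $\mM$ as a module over itself. It bounds $\sum_{j\in S}m_jm_j^*\le\widehat{E}(1)$ and $\sum_{j\in S}E(m_j^*x)^*E(m_j^*x)\le E(x^*\widehat{E}(1)x)$, the latter via the Schwarz inequality for $E$, and then applies Paschke's Cauchy--Schwarz estimate. You instead write $\eta(x_F)=P_F\eta(x)$ with $P_F$ a projection and pull back through the Pimsner--Popa inequality. This gives $\|x_F\|\le\Ind_\p(E)^{1/2}\|x\|$ with no appeal to $\widehat{E}(1)$.

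\textbf{Identifying the limit.} The paper combines Paschke's description of $w^*$-convergence of bounded nets with the Cameron--Smith lemma that the span of the functionals $\varphi(E(\cdot\,y))$ is norm-dense in $\mM_*$. Together with boundedness, this yields $\sigma$-weak convergence directly, with no compactness argument. You use Banach--Alaoglu and a unique-cluster-point argument. Your main route feeds this with the ($\sigma$-weak, $w^*$)-continuity of $\eta$, which the paper itself quotes from Blecher--Le Merdy and uses later. Your fallback needs only that these functionals separate the points of $\mM$.

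The fallback is the more instructive part. By Hahn--Banach, a subspace of $\mM_*$ separates the points of $\mM$ exactly when it is norm-dense. Separation is immediate from faithfulness of $E$: if $E(zy)=0$ for all $y$, take $y=z^*$. So your argument shows that the imported density lemma can be replaced by a one-line observation, which makes the proof essentially self-contained.
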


\begin{proof}
Let us fix one such orthonormal basis for $\eta(\mM)$, say,
$\{\eta(m_j) : j \in J\}$ (see \Cref{FiniteE} for the existence). 

Let $x \in \mM$. Consider $\mM$ as a Hilbert $C^*$-module over $\mM$
with respect to the natural $\mM$-valued inner product $\la x, y
\ra_\mM:= x^*y$, $x, y \in \mM$.  Let $\mF(J)$ denote the (directed)
set consisting of all finite subsets of $J$. Then, the nets
$\Big\{\sum_{j \in S} \la m_j^*, m_j^*\ra_\mM: S \in \mF(J)\Big\}$ and
$\Big\{ \sum_{j\in S} \la E(m_j^*x), E(m_j^*x)\ra_\mM : S \in \mF(J)
\Big\}$ are both norm-bounded in $\mM$ because
\[
\sum_{j\in S}  \la m_j^*, m_j^*\ra_\mM = \sum_{j\in S}
m_j m_j^* \leq \sum_j^{\sigma\text{-weak}} m_j m_j^* = \hat{E}(1);
\text{ and,}
\]
\[
\sum_{j\in S} \la E( m_j^*x), E(m_j^*x) \ra_\mM = \sum_{j \in S}
E( m_j^*x)^*E(m_j^*x)
\leq \sum_{j\in S} E\left( (m_j^*x)^*(m_j^*x)\right)
\leq E(x^* \hat{E}(1) x)
\]
for all $ S \in \mF(J)$, where the second-last inequality follows from
the fact that every conditional expectation is a Schwartz mapping -
see \cite[$\S 9.2$]{STR}.

Thus, it readily follows that the net $\Big\{ \sum_{j \in S} \la
m_j^*, E(m_j^*x)\ra_\mM : S \in \mF(J)\Big\}$ is norm-bounded in $\mM$
(and is convergent in the $\sigma$-weak topology) - see the discussion
made on Page 458 of \cite{Pas} (also see \cite[$\S 8.5.26$]{BM}).  We
assert that it converges $\sigma$-weakly to $x$.

  Notice that $\eta(x) = \sum^{w^*}_j \eta(m_j) \la
\eta(m_j), \eta(x)\ra_\mN$, where the $w^*$-topology comes via the
(Paschke's) duality mentioned in \Cref{self-dual-facts}. Also, we have
\[
\Big\|\sum_{j \in S}  \eta(m_j) \la
\eta(m_j), \eta(x)\ra_\mN \Big\| \leq \big\|\sum_{ j \in S} m_j E(m_j^* x)\big\|
\]
for all $S \in \mF(J)$; thus, the net $\big\{\sum_{j \in S} \eta(m_j) \la
\eta(m_j), \eta(x)\ra_\mN\big\}$ is norm-bounded in $\eta(\mM)$; so, by
\Cref{self-dual-facts} again,
\[
\varphi \Big(\Big\la \eta\big(\sum_{j \in S} m_j E(m_j^* x) \big) ,
\eta(y)\Big\ra_\mN\Big) \to \varphi(\la \eta(x), \eta(y)\ra_\mN)
\]
for all $\varphi \in \mN_*$ and $y \in \mM$, which is the same as saying that 
\begin{equation}\label{phi-y}
\varphi \Big( E \big( \sum_{j \in S}
E (x^*m_j)m_j^*y-x^*y \big)\Big)  \to 
0
\end{equation}
for all $\varphi \in \mN_*$ and $y \in \mM$.

For every $\varphi\in\mN_*$ and $y\in \mM$, let $\varphi_y: \mM \to \C$
be given by $\varphi_y(z) = \varphi\big(E(zy)\big)$, $z\in \mM$. Then,
$\varphi_y\in \mM_*$ and it is known that the set $L := \mathrm{span}
\{\varphi_{y} : y\in \mM, \varphi \in \mN_* \}$ is norm-dense in
$\mM_*$ - see \cite[Lemma 3.2]{JSvnc}. Thus,  (\ref{phi-y}) tells us that
\begin{equation}\label{L-cgt}
\psi \Big( \sum_{j \in S} E(x^* m_j)m_j^* - x^*\Big) \to 0 \text{ 
for all } \psi \in L.\end{equation}

For every $S \in \mF(J)$, let $x_S:= \sum_{j \in S} E(x^* m_j) m_j^*\  
 (= \sum_{j \in S} \la m_j^*, E(m_j^*x)\ra_\mM^* )$. From above, the net
$\{x_S\}$ is norm-bounded in $\mM$.  Hence, by the denseness of $L$ in
$\mM_*$, it follows from (\ref{L-cgt}) that \( \theta
(x_S - x^*) \to 0 \) for all $\theta \in \mM_*$.  Thus, $x_S \to x^*$
($\sigma$-weakly) in $\mM$, so that $\sum_j^{\sigma\text{-weak}} m_j
E(m_j^* x) = x$.
\end{proof}

\section{Angle between compatible intermediate $W^*$-subalgebras}\label{W*-angles}

Having understood some basic aspects of the theory of $W^*$-basic
construction for inclusions of $W^*$-algebras, analogous to the notion
of interior angle (introduced in \cite{BG2}) between compatible
intermediate $C^*$-subalgebras of an inclusion of unital
$C^*$-algebras with a finite-index conditional expectation, we
introduce a similar notion between compatible intermediate
$W^*$-subalgebras of an inclusion of $W^*$-algebras with an
appropriate conditional expectation.

Throughout this section, $\mN \subset \mM$ will denote an  inclusion of
$W^*$-algebras (with common identity) and $E: \mM \to \mN$ will denote
a (faithful) normal conditional expectation   with $\Ind_\p(E)<\infty$.

\subsection{Compatible intermediate $W^*$-subalgebras}
Following \cite{IW},  we make the following definition:
\begin{definition} \label{IMS-p}
  An intermediate $W^*$-subalgebra $\mP$ of the inclusion $\mN\subset
  \mM$ will be called $E$-compatible if there exists a normal
  conditional expectation $ F : \mM \to \mP$ \text{ such that } $
  E_{\restriction_{\mP}}\circ F =E$.

  Further, $\mathrm{IMS}(\mN,\mM,E)$
  will denote the collection of all $E$-compatible intermediate
  $W^*$-subalgebras of the inclusion $\mN \subset \mM$.
\end{definition}

\begin{remark}\label{IMS}
  Let $\mP \in \mathrm{IMS}(\mN, \mM, E)$ with respect to a
  conditional expectation $F: \mM \to \mP$.
    \begin{enumerate} 
          \item 
 Since  $\mathrm{Ind}_{\p}(E) < \infty$, it follows from \cite[Prop.
   3.14]{BDH} that $\mathrm{Ind}_{\p}(F) <\infty$ as well.
\item       Since a conditional expectation with finite probabilistic index is
    faithful, it follows readily (as on Page 471 of  \cite{IW}) that if
    $\mP \in \mathrm{IMS}(\mN,\mM,E)$ with respect to two
    conditional expectations $F$ and $G$, then $F = G$.
    \end{enumerate}
    \end{remark}

\begin{proposition}\label{dualE1}\label{IMS-facts}
Let $\mP \in \mathrm{IMS}(\mN,\mM,E)$ with respect to the conditional
expectation $F : \mM\to \mP$ and $G:=E_{\restriction_\mP}$.  Then, the
following hold:
\begin{enumerate}
\item There exists a natural unital embedding
  $\mathcal{L}_{\mP}(\mM) \subseteq \mathcal{L}_{\mN}(\mM)$ via which
  $\lambda_F(x)$ is mapped to $\lambda_E(x)$ for all $x \in \mM$.  In
  particular,  $e_F \in \mathcal{L}_{\mN}(\mM)$  and, via this embedding,
  $e_F(\eta_E(x)) = \eta_E(F(x))$ for all $x \in \mM$.

\item $e_F e_E = e_E = e_E e_F$ in $\mathcal{L}_{\mN}(\mM)$.

\item For any orthonormal basis $\{\eta_G(w_l)\}$ for the Hilbert
  $W^*$-module $\eta_G(\mP)$ over $\mN$ (as in
  \Cref{FiniteE}),
  \[
  \sum_l^{\sigma\text{-}\mathrm{weak}} \lambda_E(w_l) e_E
  \lambda_E(w_l)^* = e_F\ (\text{in } \mL_\mN(\mM)).
\]
\item $E_1(e_F) = \mathrm{Ind}_{0}(E)^{-1}
  \mathrm{Ind}_{0}(E_{\restriction_{\mP}})$, where $E_1:\mL_\mN(\mM)
  \to \mM$ is the dual conditional expectation of $E$ (as in
  \Cref{more-finite-E}).
\item If $E$ has strongly finite index, then every finite orthonormal
  basis for the Hilbert $W^*$-module $\eta_E(\mM)$ (over $\mN$) is a
  quasi-basis for the conditional expectation $E$.
\item If $E_{\restriction_\mP}$ has a finite quasi-basis, say,
  $\{\mu_j\}$, then \( \sum_j \lambda_E(\mu_j) e_E \lambda_E(\mu_j)^*
  = e_F\ (\text{in } \mL_\mN(\mM)). \)
\end{enumerate}
\end{proposition}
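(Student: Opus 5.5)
I will prove the six items in order, working inside $\mL_\mN(\mM)$ and identifying $\mM$ with $\lambda_E(\mM)$ wherever convenient.

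For (1), note that $\Ind_\p(F)<\infty$ by \Cref{IMS}(1), so $\mM$ is a Hilbert $W^*$-module over $\mP$ by \Cref{FiniteE}. Hence $\mL_\mP(\mM)$ is a $W^*$-algebra. The key identity is
\[
\la x,y\ra_\mN=E(x^*y)=G(F(x^*y))=G(\la x,y\ra_\mP).
\]
Let $T\in\mL_\mP(\mM)$ with adjoint $T^*$. Then $T$ is $\mN$-linear, and
\[
\la Tx,y\ra_\mN=G(\la Tx,y\ra_\mP)=G(\la x,T^*y\ra_\mP)=\la x,T^*y\ra_\mN,
\]
so $T\in\mL_\mN(\mM)$ with the same adjoint. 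The inclusion map $\mL_\mP(\mM)\to\mL_\mN(\mM)$ is therefore a unital $*$-homomorphism. It is isometric because both norms are the operator norm on the same underlying space $\mM$; the two module norms on $\mM$ are equivalent by \Cref{eta-bi-continuous}, and one can also argue via injectivity of a $*$-homomorphism. Since the underlying vector space is $\mM$ in both cases, $\lambda_F(x)$ is sent to $\lambda_E(x)$, and $e_F$ acts as $\eta_E(x)\mapsto\eta_E(F(x))$; we only need to check that $F$ is $\mP$-linear and self-adjoint for $\la\cdot,\cdot\ra_\mP$, which is immediate.

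Item (2) is then a direct check on $\eta_E(x)$: we have $F(E(x))=E(x)$ because $\mN\subseteq\mP$, and $E(F(x))=E(x)$ by compatibility.

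For (3), I apply \Cref{convergence-in-M} to $G:\mP\to\mN$; note $\Ind_\p(G)\le\Ind_\p(E)<\infty$ by restriction. This gives $p=\sum_l^{\sigma\text{-weak}} w_l E(w_l^*p)$ for all $p\in\mP$. Put $S_\Lambda=\sum_{l\in\Lambda} w_le_Ew_l^*$ for finite $\Lambda$. These are increasing positive operators, and they are bounded by $e_F$: each $w_le_Ew_l^*$ has range inside $e_F$'s range, and, by Paschke-type orthonormality restricted to $\eta(\mP)$, $\{\eta_E(w_l)\}$ remains orthonormal in $\eta_E(\mM)$ over $\mN$ (inner products agree since $G=E|_\mP$). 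Hence $\sum_l\theta_{w_l,w_l}$ is the projection onto the $w^*$-closure of $\eta_E(\mP)$. It remains to identify that closure with the range of $e_F$. On $\eta_E(x)$ we compute
\[
S_\Lambda\eta_E(x)=\eta_E\Bigl(\sum_{l\in\Lambda} w_lE(w_l^*F(x))\Bigr)\to\eta_E(F(x)),
\]
the convergence holding $w^*$ by \Cref{eta-w*-continuous}(2). By \Cref{self-dual-facts}(5), a bounded increasing net converging pointwise-$w^*$ converges $\sigma$-weakly, giving the sum equal to $e_F$.

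For (4), apply the normal map $E_1$ to (3) and use $E_1(we_Ew^*)=\Ind_0(E)^{-1}ww^*$ from \Cref{more-finite-E}. Normality yields $\Ind_0(E)^{-1}\sum_l w_lw_l^*=\Ind_0(E)^{-1}\Ind_0(G)$, by \Cref{more-finite-E}(2)(b) applied to $G$ (note $\Ind_0(E)^{-1}$ is central). For (5), with a finite orthonormal basis $\{m_j\}$, \Cref{convergence-in-M} gives $x=\sum_j m_jE(m_j^*x)$ as a finite sum, which is exactly the definition of a quasi-basis. For (6), \Cref{quasi-basis-facts}(2) applied to $G$ on the $\mP$-level, together with the same computation on $\eta_E(x)$ as in (3) (now with finite sums, so no convergence issues), gives $\sum_j\mu_je_E\mu_j^*\,\eta_E(x)=\eta_E(F(x))$.

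The main obstacle is the convergence bookkeeping in (3): we need boundedness of the partial sums and the passage from pointwise $w^*$-convergence to $\sigma$-weak convergence. I handle the bound via $S_\Lambda\le e_F$ or via $\|\sum_{l\in\Lambda} w_lw_l^*\|\le\|\Ind_0(G)\|$.
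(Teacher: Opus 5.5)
Your proposal is correct and follows essentially the same route as the paper. Item (3) is proved exactly as there: you evaluate the partial sums on $\eta_E(x)$, use $E=G\circ F$ and $\mP$-modularity of $F$ to reduce to the generalized quasi-basis expansion of $F(x)$ from \Cref{convergence-in-M}, pass to the $w^*$-limit via \Cref{eta-w*-continuous}(2), and conclude with \Cref{self-dual-facts}(5); (4)--(6) are then deduced as in the paper.

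The differences are cosmetic. You verify (1)--(2) directly instead of citing \cite[Prop. 2.7]{GS}, and you bound the partial sums by the projection $e_F$ rather than by $\lambda_E(\hat{G}(1))$. In (1), the isometry should rest on injectivity of the unital $*$-homomorphism, not on the module norms, since merely equivalent norms do not give equal operator norms.
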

\begin{proof}
(1) and (2) are immediate  (see \cite[Prop. 2.7]{GS}). \smallskip
  
(3): Fix an orthonormal basis $\{\eta_G(w_l)\}$ for $\eta_G(\mP)$, as in
  the hypothesis. Then, 
  \[
  x = \sum_l^{\sigma\text{-weak}} w_l
  E_{\restriction_\mP} (w_l^* x)
  \]
  for all $x \in \mP$, by \Cref{convergence-in-M}. Further, for every
  $a \in \mM$ and a finite set $S $ in the indexing set of $\{w_l\}$,
  we have
\begin{eqnarray*}
	\Big(\sum_{l \in S} \lambda_E(w_l) e_E
        \lambda_E(w_l)^*\Big)(\eta_E(a)) &=& \eta_E\Big(\sum_{ l \in
          S} w_l E(w_l^* a)\Big) \\ &=& \eta_E\Big(\sum_{l \in S}
        w_l (E_{\restriction_\mP} \circ F) (w_l^* a)\Big) \\ &=&
        \eta_E\Big(\sum_{ l \in S} w_l E_{\restriction_\mP}(w_l^*
        F(a))\Big).
\end{eqnarray*}
Since $\eta$ is ($\sigma$-weak, $w^*$)-continuous (see
\Cref{eta-w*-continuous}(2)), it follows that
\[{ \Big(\sum_{l \in S} \lambda_E(w_l) e_E
          \lambda_E(w_l)^*\Big)(\eta_E(a)) \stackrel{w^*}{\to}
          \eta_E(F(a)) = e_F(\eta_E(a)) }
\]
in $\eta_E(\mM)$, for all $a \in \mM$. Further, the net of finite sums
$\big\{\sum_{l\in S} \lambda_E(w_l) e_E \lambda_E(w_l)^* \} $ is 
norm-bounded  in $\mM$ because
\[
\sum_{l\in S} \lambda_E(w_l) e_E \lambda_E(w_l)^* \leq \sum_{l\in S}
\lambda_E(w_l)  \lambda_E(w_l)^* = \lambda_E\Big(\sum_{l \in S} w_l
w_l^*\Big) \leq \lambda_E(\hat{G}(1))
\]
for every finite indexing set $S$. Hence, by
\Cref{self-dual-facts} again,
\[
\sum_{l}^{\sigma\text{-weak}} \lambda_E(w_l) e_E
\lambda_E(w_l)^* = e_F \ \mathrm{ in }\  \mL_\mN(\mM).
\]

(4) follows from (3).\smallskip

(5) follows from \Cref{convergence-in-M}.\smallskip

(6) follows on the lines of (3).
\end{proof}
 
\subsection{Interior and exterior angle}
{\em For the sake of convenience, we simplify some
  notations as follows:}

(1) Following Jones and Watatani, we write $\mM_1$ for the reduced
$W^*$-basic construction $W^*_r\la \mM, e_E\ra (= \mL_\mN(\mM))$.
Recall from \Cref{dualE1} that, for $\mP$, $\mQ \in \mathrm{IMS}(\mN,
\mM,E)$, we have $\mP_1 , \mQ_1 \subset \mM_1$, where (by a slight
abuse of notation) $\mP_1:= \mL_\mP(\mM)$ and $\mQ_1:=\mL_\mQ(\mM)$,
with respect to the unique conditional expectations that make them
compatible.

(2) Since the conditional expectations making $\mP$ and $\mQ$ compatible
are unique, we simply write $e_\mP$ and $e_\mQ$ for the corresponding
Jones projections in $\mP_1$ and $\mQ_1$, respectively. Likewise, we
write $e_\mN$ for $e_E$. Also, for any Hilbert $C^*$-module $\mX$ over
a $C^*$-algebra $\mA$ and $x\in \mX$, $\|x\|_\mX$ denotes the norm of
$x$ in $\mX$ (induced by the $\mA$-valued inner product). \smallskip

With this, we can now give the definitions of the interior and
exterior angles.

Consider the dual normal conditional expectation $E_1: \mM_1\to \mM$
as in \Cref{dualE1}. Then, with respect to the natural $\mM$-valued
inner-product on $\mM_1$ (induced by $E_1$), $\mM_1$ becomes a Hilbert
$W^*$-module over $\mM$ by \Cref{FiniteE} and, by the Cauchy-Schwartz
inequality for Hilbert $C^*$-modules - see \cite[Proposition
  2.3]{Pas}, we have
\[
\lVert \langle e_\mP- e_\mN, e_\mQ- e_\mN \rangle_{\mM} \rVert \leq
\lVert e_\mP -e_\mN \rVert_{\mM_1} \lVert e_\mQ -e_\mN \rVert_{\mM_1}
\]
for all $\mP, \mQ \in \mathrm{IMS}(\mN,\mM,E)$. (As mentioned
above, $\|\cdot\|_{\mM_1}$ denotes the Hilbert $C^*$-module norm on
$\mM_1$.) Thus, on the lines of \cite{BDLR} and \cite{BG2}, it is natural
to make the following definitions.
\begin{definition}\label{defn-W*-angle}
 The $W^*$-interior angle between a pair $\mP, \mQ \in
 \mathrm{IMS}(\mN,\mM,E)$, denoted by $\alpha_{W^*}(\mP,\mQ)$, is
 defined by the expression
	\begin{equation}\label{DefiWan}
\cos   (     \alpha_{W^*}(\mP,\mQ)) =  \frac{\lVert \langle e_\mP - e_\mN,
          e_\mQ - e_\mN \rangle_\mM \rVert }{\lVert e_\mP-e_\mN
          \rVert_{\mM_1} \lVert e_\mQ - e_\mN
          \rVert_{\mM_1}},
	\end{equation}
        with value in the interval $\left[0, \frac{\pi}{2}\right]$.

        Furthermore,
for $\mP, \mQ \in \mathrm{IMS}(\mN,\mM,E)\setminus \{\mN\}$ with
$\mP_1, \mQ_1 \in \IMS(\mM, \mL_\mN(\mM), E_1)$, the $W^*$-exterior angle
between $\mP$ and $\mQ$ is defined as
\begin{equation}\label{beta-angle}
  \beta_{W^*}(\mP, \mQ) =
\alpha_{W^*}(\mP_1, \mQ_1).
\end{equation}
\end{definition}
(When we need to keep track of the initial inclusion $\mN \subset \mM$,
we shall use the notation $\alpha_\mN^\mM(\mP,\mQ) $ for
$\alpha_{W^*}(\mP,\mQ) $.)

\begin{remark}
The preceding definition of interior angle agrees with the interior
angle defined between intermediate subfactors of an irreducible
finite-index subfactor $N \subset M$ defined in \cite{BDLR}. This
follows from the expressions for the interior angle given in
\cite[Prop. 2.14]{BDLR} and \Cref{W*-angle-quasi-basis} derived ahead.
\end{remark}

Analogous to \cite [Prop. 5.10(2)]{BG2}, employing
\Cref{IMS-facts}, the following expression for the interior angle is
immediate.
\begin{proposition}\label{W_angle}
Let  $\mP , \mQ \in \mathrm{IMS}(\mN,\mM,
E)$. Then,  
\begin{equation}\label{angle-expression}
\cos(\alpha_{W^*}(\mP,\mQ)) = \frac{\lVert E_1(e_\mP e_\mQ -
  e_\mN)\rVert}{\lVert \mathrm{Ind}_{0}(E)^{-1}
 ( \mathrm{Ind}_{0}(E_{\restriction_{\mP}})-1)\rVert^{1/2}\, \lVert
 \mathrm{Ind}_{0}(E)^{-1} ( \mathrm{Ind}_{0}(E_{\restriction_{\mQ}})-1) \rVert^{1/2}}.
\end{equation}

In particular, $\alpha(\mP, \mQ) = \frac{\pi}{2}$ if the quadruple
$(\mN, \mP, \mQ, \mM)$ is a commuting square (i.e., $e_\mP e_\mQ =
e_\mN$).
\end{proposition}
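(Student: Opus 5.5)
The plan is to compute the numerator and the two norms in \eqref{DefiWan} directly from the definition of the $\mM$-valued inner product on $\mM_1$, namely $\la X, Y\ra_\mM = E_1(X^*Y)$, using the projection relations of \Cref{IMS-facts}(2) and the value of $E_1(e_\mP)$ from \Cref{IMS-facts}(4).

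\emph{Numerator.} Since $e_\mP, e_\mQ, e_\mN$ are self-adjoint projections and, by \Cref{IMS-facts}(2), $e_\mP e_\mN = e_\mN = e_\mN e_\mP$ and likewise for $\mQ$, we have
\[
(e_\mP - e_\mN)(e_\mQ - e_\mN) = e_\mP e_\mQ - e_\mP e_\mN - e_\mN e_\mQ + e_\mN = e_\mP e_\mQ - e_\mN .
\]
Hence $\la e_\mP - e_\mN, e_\mQ - e_\mN\ra_\mM = E_1(e_\mP e_\mQ - e_\mN)$.

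\emph{Denominator.} Taking $\mQ = \mP$ gives $\la e_\mP - e_\mN, e_\mP - e_\mN\ra_\mM = E_1(e_\mP - e_\mN)$. Applying \Cref{IMS-facts}(4) to $\mP$ gives $E_1(e_\mP) = \Ind_0(E)^{-1}\Ind_0(E_{\restriction_\mP})$. Applying it to $\mN$ itself, which is trivially $E$-compatible via $F = E$ and has $\Ind_0(E_{\restriction_\mN}) = 1$ because $\{1\}$ is an orthonormal basis of $\mN$ over $\mN$, gives $E_1(e_\mN) = \Ind_0(E)^{-1}$; this also follows directly from $E_1(x e_E y) = \Ind_0(E)^{-1}xy$ with $x = y = 1$. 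Therefore
\[
\|e_\mP - e_\mN\|_{\mM_1} = \|\Ind_0(E)^{-1}(\Ind_0(E_{\restriction_\mP}) - 1)\|^{1/2},
\]
and the same holds for $\mQ$. Substituting these into \eqref{DefiWan} yields \eqref{angle-expression}.

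\emph{Commuting square.} If $e_\mP e_\mQ = e_\mN$, the numerator vanishes, so $\cos\alpha = 0$ and $\alpha = \pi/2$. This assumes $\mP, \mQ \neq \mN$, so that the denominator is nonzero. That is implicit in the definition: when $\mP \neq \mN$ we have $e_\mP \neq e_\mN$, and faithfulness of $E_1$ gives $E_1(e_\mP - e_\mN) \neq 0$.

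The only step needing care is justifying $\Ind_0(E_{\restriction_\mN}) = 1$, i.e.\ $E_1(e_\mN) = \Ind_0(E)^{-1}$. The cleanest route is to bypass \Cref{IMS-facts}(4) here and use the defining property of $E_1$ in \Cref{more-finite-E} directly. The remaining steps are routine algebra.
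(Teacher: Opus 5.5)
Your proposal is correct and matches the paper's argument. The paper only says the formula is immediate from \Cref{IMS-facts}, and what that amounts to is exactly your three steps: the expansion $(e_\mP - e_\mN)(e_\mQ - e_\mN) = e_\mP e_\mQ - e_\mN$ from item (2), the value $E_1(e_\mP) = \Ind_0(E)^{-1}\Ind_0(E_{\restriction_\mP})$ from item (4), and $E_1(e_\mN) = \Ind_0(E)^{-1}$ from the defining property of $E_1$. Your remark that the commuting-square conclusion needs $\mP, \mQ \neq \mN$ is also right: the denominator is then nonzero by injectivity of $\eta_E$ and faithfulness of $E_1$, a point the paper leaves implicit.
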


\begin{remark}
In \cite{BDLR}, for a quadruple of $II_1$-factors, the analogue of the
assertion in the preceding proposition regarding the quadruple being a
commuting square was a necessary as well as a sufficient
condition. There, the tracial property of the unique tracial state on
the basic construction $II_1$-factor played a crucial role in proving
the converse.
  \end{remark}

Employing \Cref{IMS-facts}(6), the following observation is immediate
(and is analogous to \cite[Prop. 3.3]{GS} and \cite[Prop.
  2.14]{BG2}).
\begin{proposition}\label{W*-angle-quasi-basis}
  Let $E$ have a finite quasi-basis; let $\mP, \mQ \in \IMS(\mM, \mN,
  E)$ with compatible conditional expectations $F: \mM \to \mP$ and
  $G: \mM \to \mQ$. Then, $F$ and $G$ have finite quasi-bases, say,
  $\{\mu_j\}$ and $\{\delta_k\}$, respectively; and,
  \begin{equation}
\cos\left(\alpha_{W^*}(\mP, \mQ)\right) = \frac{\|
  \Ind_W(E)^{-1}\left(\sum_{j,k}\mu_j E(\mu_j^* \delta_k)
  \delta_k^*-1\right) \|}{\| \Ind_W(E)^{-1}\left(\mathrm{Ind_W}
  (E_{\restriction_{\mP}}) -1\right) \|^{1/2}\,
  \|\Ind_W(E)^{-1}\left(\mathrm{Ind_W} (E_{\restriction_{\mQ}})-
  1\right)\|^{1/2}}.
\end{equation}
In particular, when $\Ind_W(E)$ is a scalar, then 
  \begin{equation}\label{ind-scalar-angle}
\cos\left(\alpha_{W^*}(\mP, \mQ)\right) =   \frac{\|\sum_{j,k}\mu_j
    E(\mu_j^* \delta_k)\delta_k^* -1
    \|}{\| \mathrm{Ind_W} (E_{\restriction_{\mP}}) -1
 \|^{1/2}\, \|\mathrm{Ind_W}
      (E_{\restriction_{\mQ}})- 1\|^{1/2}}.
\end{equation}
\end{proposition}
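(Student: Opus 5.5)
The plan is to reduce everything to \Cref{W_angle} and then compute $E_1(e_\mP e_\mQ)$ explicitly using the finite quasi-bases and \Cref{IMS-facts}(6). The argument has four steps.

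\emph{Step 1: finite quasi-bases for the restrictions.} By \Cref{W*-b-c-finite-quasi-basis}, $\Ind_\p(E)<\infty$, so \Cref{W_angle} applies. Denote the restrictions $E_{\restriction_\mP}$ and $E_{\restriction_\mQ}$ simply as restrictions; the finite quasi-bases $\{\mu_j\}$ and $\{\delta_k\}$ are to be taken for these (this is what \Cref{IMS-facts}(6) requires). To produce them, take a finite quasi-basis $\{u_i\}$ for $E$ and set $\mu$'s to be $F(u_i)$. For $x\in\mP$ we have $x=F(x)=\sum_i F(u_i)E(u_i^*x)$, and $E(u_i^*x)=E(F(u_i^*x))=E(F(u_i)^*x)$ because $x\in\mP$. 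Hence $\{F(u_i)\}$ is a quasi-basis for $E_{\restriction_\mP}$, and likewise $\{G(u_i)\}$ for $E_{\restriction_\mQ}$. If one also wants quasi-bases for $F$ and $G$ themselves, the finite set $\{u_i\}$ works: $x=\sum u_iE(u_i^*x)$, so applying $\lambda$ gives $\sum \lambda(u_i)e_\mN\lambda(u_i)^*=1$, and since $e_\mN\le e_\mP$ we get $\sum\lambda(u_i)e_\mP\lambda(u_i)^*\ge 1$. This is a weaker, routine point.

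\emph{Step 2: the numerator.} By \Cref{IMS-facts}(6),
\[
e_\mP=\sum_j \mu_j e_\mN \mu_j^*,\qquad e_\mQ=\sum_k \delta_k e_\mN \delta_k^*,
\]
identifying $\mM$ with $\lambda_E(\mM)$. Using $e_\mN y e_\mN = E(y)e_\mN$, this gives
\[
e_\mP e_\mQ=\sum_{j,k}\mu_j E(\mu_j^*\delta_k)\, e_\mN\, \delta_k^*.
\]
Applying $E_1(xe_\mN y)=\Ind_0(E)^{-1}xy$ from \Cref{more-finite-E} yields
\[
E_1(e_\mP e_\mQ)=\Ind_0(E)^{-1}\sum_{j,k}\mu_jE(\mu_j^*\delta_k)\delta_k^*,
\]
and $E_1(e_\mN)=\Ind_0(E)^{-1}$.

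\emph{Step 3: the denominators.} By \Cref{IMS-facts}(4), the denominator factors are $\Ind_0(E)^{-1}(\Ind_0(E_{\restriction_\mP})-1)$ and the analogous expression for $\mQ$.

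\emph{Step 4: identifying the indices (main obstacle).} One must show $\Ind_0=\Ind_W$ whenever a finite quasi-basis exists, for $E$ and for the two restrictions. My plan is to compute $\widehat{E}(1)$ using $\widehat{E}(xe_\mN x^*)=xx^*$ from \Cref{more-finite-E} together with the identity $1=\sum_i u_ie_\mN u_i^*$ from \Cref{quasi-basis-facts}(2). This gives $\widehat{E}(1)=\sum_i u_iu_i^*$, which is $\Ind_W(E)$ by \Cref{original-W-index}(2). Two points need care here. First, $\widehat{E}$ is additive on positives, and this is needed only for a finite sum. Second, $\widehat{E}$ is the operator valued weight of \Cref{more-finite-E}, and the same argument applies to the restrictions with respect to their own basic constructions.

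Substituting Steps 2–4 into \Cref{W_angle} gives the first formula. When $\Ind_W(E)$ is a scalar, it factors out of every norm and cancels between numerator and denominator, leaving \eqref{ind-scalar-angle}.
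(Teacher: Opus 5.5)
Your derivation of the formula is correct and follows the same route as the paper, which simply defers to the computations in \cite[Prop. 2.14]{BG2} and \cite[Prop. 3.3]{GS}. You are also right that, despite the wording of the statement, $\{\mu_j\}$ and $\{\delta_k\}$ must be quasi-bases for $E_{\restriction_\mP}$ and $E_{\restriction_\mQ}$, as in \Cref{IMS-facts}(6). Your quasi-basis $\{F(u_i)\}$ for $E_{\restriction_\mP}$, and your identification $\Ind_0=\Ind_W$ via $\widehat{E}(1)=\sum_i u_iu_i^*$, supply details the paper leaves implicit.

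The one real error concerns the part of the statement asserting that $F$ and $G$ themselves have finite quasi-bases. Your claim that $\{u_i\}$ is a quasi-basis for $F$ is false. For $\mP=\mM$ and $F=\mathrm{id}_\mM$ it would force $\sum_i u_iu_i^*=1$, whereas $\sum_i u_iu_i^*=\Ind_W(E)$, which equals $4$ for $\C\subset M_2(\C)$ with the normalized trace. The inequality $T:=\sum_i\lambda(u_i)e_\mP\lambda(u_i)^*\ge 1$ only shows that $T$ is invertible, not that $T=1$.

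You can repair the argument from exactly that point. The element $T$ lies in $\mL_\mP(\mM)$, which by \Cref{IMS-facts}(1) sits unitally as a $C^*$-subalgebra of $\mL_\mN(\mM)$, so $T^{-1/2}\in\mL_\mP(\mM)$. Since $\Ind_\p(F)<\infty$ by \Cref{IMS}(1), $\eta_F$ maps $\mM$ onto its self-dual completion over $\mP$. So you may define $w_i\in\mM$ by $\eta_F(w_i)=T^{-1/2}\eta_F(u_i)$. Right $\mP$-linearity of $T^{-1/2}$ then gives $T^{-1/2}\lambda(u_i)e_\mP=\lambda(w_i)e_\mP$. Hence $1=T^{-1/2}TT^{-1/2}=\sum_i\lambda(w_i)e_\mP\lambda(w_i)^*$, so $\{w_i\}$ is a finite quasi-basis for $F$. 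Alternatively, cite \cite[Prop. 3.5]{Khos}, as the paper does.
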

\begin{proof}
That $F$ and $G$ have finite quasi-bases follows from
\cite[Prop. 3.5]{Khos}. Rest follows verbatim on the lines of the
proofs of  \cite[Prop.  2.14]{BG2} and \cite[Prop. 3.3]{GS}.
  \end{proof}

\begin{remark}
  As expected, the notion of $C^*$-interior angle between compatible
  intermediate $C^*$-sualgebras of a finite-index inclusion of unital
  $C^*$-algebras (as defined in \cite[Definition 5.1]{BG2}) agrees
  with the above defined $W^*$-interior angle between their
  corresponding biduals - see \Cref{C-star-vs-W-star-angle}.
  \end{remark}

\begin{example}
 Let $G$ be a countable discrete group acting on a von Neumann algebra
 $\mM$, $H$ be a subgroup of $G$. Then, there is a natural embedding
 of $\mM \rtimes H$ into $\mM \rtimes G$ and there exists a unique
 faithful normal conditional expectation $E: \mM \rtimes G \to \mM
 \rtimes H$ satisfying
      \[
      E(\sum_{g\in G} x_g g) =
      \sum_{h\in H}x_h h
      \]
      for every $\sum_{g \in G}x_g g$ in the $\sigma$-weakly dense
      $*$-subalgebra $\left\{ \sum_{g\in G} x_{g} {g}: x_{g} \in
      \mM, g \in G \right \}$ (where the sums are finite) of $\mM
      \rtimes G$. Moroever, if $[G:H]<\infty$, then $E$ has a finite
      quasi-basis $\{ {g_i}: 1 \leq i \leq [G:H]\}$ for any set of
      coset representative $\{g_i: 1 \leq i \leq [G:H]\}$ of $H$ in
      $G$. In particular, $\Ind_W(E)=[G:H]$.

      Thus, if $[G:H]<\infty$ and $K$ and $L$ are two intermediate
      subgroups of the inclusion $H \subset G$, then $\mM \rtimes K,
      \mM \rtimes L \in \IMS(\mM \rtimes G, \mM \rtimes H, E)$ with
      respect to the (above-mentioned) canonical conditional
      expectations and, by \Cref{ind-scalar-angle} - after imitating the
      derivation of \cite[Prop. 5.2]{GS} - we obtain
  \[
\cos(\alpha_{W^*}(\mM \rtimes K, \mM \rtimes L)) = \frac{[K \cap L: H]
  -1}{\sqrt{[K:H]-1}\sqrt{[L:H]-1}}.
  \]
This generalizes the expression for the interior angle between
intermediate crossed-product subfactors obtained in
\cite[Prop. 2.7]{BG1}.
\end{example}  

\section{Angle between  compatible intermediate subalagebras of
  inclusions of non-unital $C^*$-algebras}\label{non-unital-angle}

We shall now apply the preceding notion of angle defined between
compatible intermediate $W^*$-subalgebras to define the notion of
interior angle between compatible intermediate $C^*$-subalgebras of
``finite-index'' inclusions of non-unital $C^*$-algebras

Throughout this section, $\mB \subset \mA$ will
denote a fixed inclusion of $C^*$-algebras with a conditional
expectation $E: \mA \to \mB$ with finite Watatani index
(\Cref{izumi-defn}). The set of $E$-compatible intermediate
$C^*$-subalgebras of the inclusion $\mB \subset \mA$ will be denoted
by $\mathrm{IMS}_W(\mB, \mA, E)$, i.e.,
\begin{eqnarray*} &
\mathrm{IMS}_W(\mB, \mA, E):=\Big\{\mC: \mC\ \text{is an intermediate
} C^*\text{-subalgebra of } \mB \subset \mA \text{ and there exists 
  a  } \\ &
 \text{ conditional    expectation } F:
\mA \to \mC \text{ such that } \Ind_W(F)< \infty \text{ and }
    {E}_{\restriction_{\mC}} \circ F = E\Big\}.
\end{eqnarray*}

As in \Cref{IMS}, if $\mC \in \mathrm{IMS}_W(\mB, \mA,
E)$ with respect to two conditional expectations $F, G: \mA \to \mC$,
then $F = G$.

From \Cref{dualE1}, we obtain the following useful
observations in the context of $C^*$-algebras. 

\begin{proposition}\label{wstarims}
Let $(E^{**})_1: \mL_{\mB^{**}}(\mA^{**}) \to \mA^{**}$ denote the
dual of the normal conditional expectation $E^{**}: \mA^{**} \to
\mB^{**}$ (as in \Cref{more-finite-E}) and $\mC \in \IMS_W(\mB,\mA,E)$
with respect to the conditional expectation $F: \mA \to C$ with finite
Watatani index. Then, the following hold:
	\begin{enumerate}
        \item $\mC^{**} \in \mathrm{IMS}(\mB^{**}, \mA^{**}, E^{**})$
          with respect to the conditional expectation $F^{**} :
          \mA^{**} \to \mC^{**}$.
		\item  $\mL_{C^{**}}(A^{**}) \subseteq \mL_{\mB^{**}}(A^{**})$.
	
		\item  $e_{\mC^{**}}e_{\mB^{**}} = e_{\mB^{**}}= e_{\mB^{**}}e_{\mC^{**}}$.
		\item There exists a family $\{ w_l\}$ in
                  $\mC^{**}$ such that $e_{\mC^{**}} =
                  \sum_l^{\sigma\text{-}\mathrm{weak}} w_l
                  e_{\mB^{**}} w_l^*$ in $\mL_{\mB^{**}}(A^{**})$.  
                \item $(E^{**})_1(e_{\mB^{**}}) =
                  \mathrm{Ind}_0(E^{**})^{-1} \in \mZ (A^{**})$.
		\item $(E^{**})_1(e_{\mC^{**}}) =
                  \mathrm{Ind}_0(E^{**})^{-1}
                  \mathrm{Ind}_0(E^{**}_{\restriction_{\mC}})$.\item
                  If $E_{\restriction_{\mC}}$ has a finite
                  quasi-basis, say, $\{\mu_j\}$, then $e_{\mC^{**}} =
                  \sum_j^{\sigma\text{-}\mathrm{weak}} \mu_j
                  e_{\mB^{**}} \mu_j^*$ in $\mL_{\mB^{**}}(A^{**})$.
	\end{enumerate}
\end{proposition}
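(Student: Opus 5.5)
The plan is to reduce everything to \Cref{dualE1} applied to the inclusion $\mB^{**}\subset\mA^{**}$ with the normal conditional expectation $E^{**}$. The only genuinely new content is Item (1); once $\mC^{**}$ is known to lie in $\IMS(\mB^{**},\mA^{**},E^{**})$ with compatible expectation $F^{**}$, the remaining items are specializations of \Cref{dualE1}.

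\textbf{Item (1).} Since $\Ind_W(E)<\infty$, we have $\Ind_\p(E)<\infty$. By \Cref{crucial-facts}, $\Ind_\p(E^{**})=\Ind_\p(E)<\infty$, and $\mB^{**}\subset\mA^{**}$ is a unital inclusion. Likewise $\Ind_\p(F)<\infty$, so $F^{**}:\mA^{**}\to\mC^{**}$ is a faithful normal conditional expectation, and $\mC^{**}\subset\mA^{**}$ is unital with $1_{\mC^{**}}=1_{\mA^{**}}$. Using \cite[Cor. 3.7.9]{Ped} for the chain $\mB\subset\mC\subset\mA$, we realize $\mB^{**}\subset\mC^{**}\subset\mA^{**}$ compatibly as $w^*$-closed subalgebras. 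Then
\[
(E_{\restriction_\mC})^{**}=E^{**}_{\restriction_{\mC^{**}}},
\]
because both maps are normal and agree on the $\sigma$-weakly dense subalgebra $\mC$. Next, $E^{**}_{\restriction_{\mC^{**}}}\circ F^{**}$ and $E^{**}$ are both normal maps on $\mA^{**}$ that agree on $\mA$, since $E_{\restriction_\mC}\circ F=E$ there. By Kaplansky density / $\sigma$-weak density of $\mA$ in $\mA^{**}$, they coincide. This gives Item (1).

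The main obstacle I expect is the bookkeeping of the three biduals as nested $w^*$-subalgebras with common unit. In particular, one must check that the canonical embedding $\mC^{**}\hookrightarrow\mA^{**}$ restricts on $\mB^{**}$ to the embedding $\mB^{**}\hookrightarrow\mA^{**}$. I would argue this through the uniqueness of normal extensions of the inclusion maps.

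\textbf{Items (2)--(7).} With Item (1) in hand, apply \Cref{dualE1} to $\mB^{**}\subset\mA^{**}$, $E^{**}$, and $\mP=\mC^{**}$, $F^{**}$:
\begin{itemize}
\item Items (2) and (3) are \Cref{dualE1}(1),(2).
\item Item (4) is \Cref{dualE1}(3). An orthonormal basis $\{\eta(w_l)\}$ of $\mC^{**}$ over $\mB^{**}$ with $w_l\in\mC^{**}$ exists by \Cref{FiniteE}, since $E^{**}_{\restriction_{\mC^{**}}}$ has finite probabilistic index by \Cref{IMS}(1).
\item Item (6) is \Cref{dualE1}(4), using $E^{**}_{\restriction_{\mC^{**}}}=(E_{\restriction_\mC})^{**}$ from above.
\item Item (5) is the special case $\mP=\mB^{**}$, or directly $E_1(e_E)=\Ind_0(E)^{-1}$ from \Cref{more-finite-E}. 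Centrality of $\Ind_0(E^{**})^{-1}$ is part of the definition of the BDH-index.
\end{itemize}

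\textbf{Item (7).} A finite quasi-basis $\{\mu_j\}$ for $E_{\restriction_\mC}$ is, by \Cref{quasi-basis-facts}(1), also a quasi-basis for $(E_{\restriction_\mC})^{**}=E^{**}_{\restriction_{\mC^{**}}}$. Then \Cref{dualE1}(6) yields the finite (hence $\sigma$-weak) sum formula.
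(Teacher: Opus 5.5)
Your proposal is correct and follows the paper's proof. The paper also obtains Item (1) from the nested unital biduals $\mB^{**}\subseteq\mC^{**}\subseteq\mA^{**}$, the identity $(E_{\restriction_\mC})^{**}=(E^{**})_{\restriction_{\mC^{**}}}$, and $(E^{**})_{\restriction_{\mC^{**}}}\circ F^{**}=E^{**}$ (which you justify by normality and $\sigma$-weak density), and then reads off Items (2)--(7) from \Cref{dualE1} applied to $E^{**}$; you simply write out more of the routine details. One minor citation slip: in Item (4), the finiteness of $\Ind_\p((E^{**})_{\restriction_{\mC^{**}}})$ does not come from \Cref{IMS}(1), which concerns $F^{**}$; it is immediate because positivity of $\gamma E^{**}-\mathrm{id}_{\mA^{**}}$ is preserved on restriction to $\mC^{**}$.
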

 
\begin{proof}
(1) Notice that $\mB^{**}\subseteq \mC^{**} \subseteq \mA^{**}$ with
  common identity (see \Cref{E-to-E**}). Also,
  $(E^{**})_{\restriction_{C^{**}}} = (E_{\restriction_{C}})^{**}$ and
  $(E^{**})_{\restriction_C} \circ F^{**} = E^{**}$, i.e., $ \mC^{**}
  \in \mathrm{IMS}(\mB^{**}, \mA^{**}, E^{**})$.

Others follow readily from \Cref{dualE1}.
\end{proof}

On expected lines, it turns out that the $C^*$-interior angle
defined in \cite[Defn. 5.1]{BG2} between compatible
intermediate $C^*$-subalgebras of a finite-index inclusion of unital
$C^*$-algebras agrees with the $W^*$-interior angle between their
corresponding enveloping von Neumann algebras. 
\begin{proposition}\label{C-star-vs-W-star-angle}
  Let $ \mB\subset \mA$ be an inclusion of  unital $C^*$-algebras with a 
  conditional expectation $E :\mA \to \mB$ with finite Watatani
  index and let $\mC, \mD \in \IMS_W(\mA, \mB, E)$. Then,
  \[
\alpha_{W^*}(\mC^{**},\mD^{**}) = \alpha_{C^*}(\mC,\mD),
\]
where $\alpha_{C^*}$ denotes  the $C^*$-interior angle as in 
 \cite[Defn. 5.1]{BG2}.
\end{proposition}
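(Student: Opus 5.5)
The plan is to reduce both angles to explicit quasi-basis formulas and check that they coincide. Since $\mA$ and $\mB$ are unital and $\Ind_W(E)<\infty$, \Cref{original-W-index}(1) gives that $E$ has a finite quasi-basis $\{u_i\}\subset\mA$. By \Cref{quasi-basis-facts}(1), the same family is a quasi-basis for $E^{**}$. By \Cref{original-W-index}(3), $\Ind_W(E^{**})=\Ind_W(E)$ and $1_{\mB^{**}}=1_{\mA^{**}}$.

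Next I would handle the intermediate algebras. The conditional expectations $F:\mA\to\mC$ and $G:\mA\to\mD$ have finite Watatani index between unital algebras, so they admit finite quasi-bases $\{\mu_j\}\subset\mC$ and $\{\delta_k\}\subset\mD$. By \Cref{quasi-basis-facts}(1) these are quasi-bases for $F^{**}$ and $G^{**}$. By \Cref{wstarims}(1), $\mC^{**},\mD^{**}\in\IMS(\mB^{**},\mA^{**},E^{**})$ with respect to $F^{**}$ and $G^{**}$. Hence \Cref{W*-angle-quasi-basis} applies to $E^{**}$ and yields
\[
\cos\alpha_{W^*}(\mC^{**},\mD^{**})=\frac{\|\Ind_W(E^{**})^{-1}(\sum_{j,k}\mu_jE^{**}(\mu_j^*\delta_k)\delta_k^*-1)\|}{\|\Ind_W(E^{**})^{-1}(\Ind_W(E^{**}_{\restriction_{\mC^{**}}})-1)\|^{1/2}\,\|\Ind_W(E^{**})^{-1}(\Ind_W(E^{**}_{\restriction_{\mD^{**}}})-1)\|^{1/2}}.
\]
Some care is needed here: the quasi-bases entering \Cref{W*-angle-quasi-basis} are really those of $E_{\restriction_\mP}$ and $E_{\restriction_\mQ}$, as in \Cref{IMS-facts}(6). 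So I would pick $\{\mu_j\}$ and $\{\delta_k\}$ as quasi-bases of $E_{\restriction_\mC}$ and $E_{\restriction_\mD}$, which have finite index by \cite[Prop. 3.5]{Khos}. Since these families lie in $\mA$, $E^{**}=E$ on them, and \Cref{original-W-index}(2) gives $\Ind_W(E^{**}_{\restriction_{\mC^{**}}})=\sum_j\mu_j\mu_j^*=\Ind_W(E_{\restriction_\mC})$, and likewise for $\mD$. Every element in the display therefore lies in $\mA\subset\mA^{**}$. The $C^*$-embedding is isometric, so all norms computed in $\mA^{**}$ agree with those computed in $\mA$.

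Finally I would invoke the corresponding expression for $\alpha_{C^*}$ from \cite[Prop. 5.10 / 2.14]{BG2}, which is exactly the same quotient computed in $\mA$. Alternatively, I would unfold \cite[Defn. 5.1]{BG2} directly: $C^*_r\la\mA,e_E\ra=\mL_\mB(\mA)$ sits inside $\mL_{\mB^{**}}(\mA^{**})$ via $x e_\mB y\mapsto x e_{\mB^{**}}y$. The dual conditional expectation $(E^{**})_1$ restricts to $E_1$, because both equal $\Ind_W(E)^{-1}xy$ on $xe_\mB y$. The projections $e_\mC,e_\mD$ map to $e_{\mC^{**}},e_{\mD^{**}}$ via $\sum_j\mu_je_\mB\mu_j^*$ and \Cref{wstarims}(7). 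The inner products and module norms then coincide on the nose, again by isometry of $\mA\hookrightarrow\mA^{**}$.

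The main obstacle is checking this compatibility of the embeddings and the dual expectations, namely that $(E^{**})_1$ extends $E_1$ and that the $C^*$- and $W^*$-Jones projections correspond. I would verify it on the spanning elements $xe_\mB y$ using the finite quasi-basis, where everything is a finite algebraic computation.
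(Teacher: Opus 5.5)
Your proposal is correct and follows essentially the same route as the paper. The steps are: take finite quasi-bases $\{\mu_j\}$, $\{\delta_k\}$ of $E_{\restriction_{\mC}}$ and $E_{\restriction_{\mD}}$; note that they remain quasi-bases for $(E^{**})_{\restriction_{\mC^{**}}}=(E_{\restriction_{\mC}})^{**}$ and $(E^{**})_{\restriction_{\mD^{**}}}=(E_{\restriction_{\mD}})^{**}$; apply \Cref{W*-angle-quasi-basis} to $E^{**}$; and match the result with the quasi-basis expression for $\alpha_{C^*}$, for which the paper cites \cite[Prop. 3.3]{GS} (this also covers non-scalar $\Ind_W(E)$). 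Your observation that the quasi-bases in \Cref{W*-angle-quasi-basis} must be those of the restricted expectations rather than of $F,G$ is exactly how the paper uses it, so the alternative route of unfolding the definitions directly is not needed.
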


\begin{proof}
  Note that $ \Ind_\cp(E^{**}) = \Ind_\cp(E)<\infty$,
  $\Ind_0(E^{**}) = \Ind_W(E^{**}) = \Ind_W(E)$ and $(E^{**})_1(e_{\mB^{**}}) =
  \Ind_0(E^{**})^{-1}$, where $(E^{**})_1: \mL_{\mB^{**}}(\mA^{**})
  \to \mA^{**}$ is the dual of the normal conditional expectation
  $E^{**}: \mA^{**} \to \mB^{**}$ - see \Cref{dualE1}.
  
Let $F: \mA \to \mC$ and $G: \mA \to \mD$ be the compatible
conditional expectations with finite Watatani indices.  By
\Cref{original-W-index}, all of $E$, $F$ and $G$ have finite
quasi-bases in $\mA$. Further, $E_{\restriction_{\mC}}$ and
$E_{\restriction_{\mD}}$ also have finite quasi-bases, say,
$\{\mu_j\}$ and $\{\delta_k\}$, respectively - see \cite{watatani} or
\cite[Prop. 2.7]{GS}. Thus, by \cite[Prop. 3.3]{GS}, the
$C^*$-interior angle between $\mC$ and $\mD$ is given by
\[
\cos(\alpha_{C^*}(\mC,\mD)) = \frac{\lVert
          \mathrm{Ind_W} (E)^{-1}\left(\sum_{j,k} \mu_j E(\mu_j^*
          \delta_k)\delta_k^* -1\right) \rVert}{\lVert
          \mathrm{Ind}_{W}(E)^{-1}\left(
          \mathrm{Ind}_{W}(E_{\restriction_{C}})-1\right)\rVert^{1/2} \lVert
          \mathrm{Ind}_{W}(E)^{-1}
          \left(\mathrm{Ind}_{W}(E_{\restriction_{D}})-1 \right)\rVert^{1/2}}.
\]

Next, notice that $\{\mu_j\}$ and $\{\delta_k\}$, respectively, are
quasi-bases for $(E^{**})_{\restriction_{\mC^{**}}} (=
(E_{\restriction_{\mC}})^{**})$ and
$(E^{**})_{\restriction_{\mD^{**}}} (=
(E_{\restriction_{\mD}})^{**})$, as well. Thus, from
\Cref{W*-angle-quasi-basis}, it follows that
\begin{eqnarray*}
  \lefteqn{ 	\cos(\alpha_{W^*}(\mC^{**}, \mD^{**}))}\\
  & = & \frac{\lVert
          \mathrm{Ind}_W (E^{**})^{-1} \left( \sum_{j,k} \mu_j
          E^{**}(\mu_j^* \delta_k)\delta_k^* -1\right) \rVert}{\lVert
          \mathrm{Ind}_W (E^{**})^{-1} \big( \mathrm{Ind}_{W}
          \left((E^{**})_{\restriction_{C^{**}}}\right) - 1 \big) \rVert^{1/2}
          \lVert \mathrm{Ind}_W (E^{**})^{-1}
          \big(\mathrm{Ind}_{W}\left( (E^{**})_{\restriction_{D^{**}}}\right)-1
            \big)\rVert^{1/2}
  }\\ & = &
\frac{\lVert
          \mathrm{Ind_W} (E)^{-1}\left(\sum_{j,k} \mu_j E(\mu_j^*
          \delta_k)\delta_k^* -1\right) \rVert}{\lVert
          \mathrm{Ind}_{W}(E)^{-1}\left(
          \mathrm{Ind}_{W}(E_{\restriction_{C}})-1\right)\rVert^{1/2} \lVert
          \mathrm{Ind}_{W}(E)^{-1}
          \left(\mathrm{Ind}_{W}(E_{\restriction_{D}})-1 \right)\rVert^{1/2}},
\end{eqnarray*} and we are done.
\end{proof}

In view of \Cref{C-star-vs-W-star-angle},  the following
definition is quite natural.
\begin{definition}
  Let $ \mB\subset \mA$ be an inclusion of  $C^*$-algebras with
  a conditional expectation $E :\mA \to \mB$ with finite Watatani index.
   Then, the $C^*$-interior angle
  between any two  $\mC, \mD \in \IMS_W(\mA, \mB, E)$ is defined by
\begin{equation}
	\alpha_{C^*} (\mC,\mD)) = \alpha_{W^*}(\mC^{**},\mD^{**}).
\end{equation}
\end{definition}

\begin{remark}
Prior to the preceding definition, Bakshi et al (\cite{BGJ}) gave a
definition for the interior angle between intermediate
$C^*$-subalgebras of an irreducible inclusion of $\sigma$-unital
simple $C^*$-algebras. In view of an observation preceding
\cite[Thm. 2.8]{izumi}, wherein Izumi illustrates that the Jones
projection $e_{\mB^{**}}$ can be identified with $e_\mB$, both definitions
agree under the hypothesis of \cite[Defn. 2.4]{BGJ}.
  \end{remark}

\subsection{Stability of the interior angle under the minimal tensor product}

Recall that for any $C^*$-algebra $\mP$ and any inclusion of
$C^*$-algebras $\mB \subset \mA$, there is a natural $C^*$-embedding
of $\mB \omin \mP$ into $\mA \omin \mP$ and if $E: \mA \to \mB$ is a
conditional expectation, then $E \omin \mathrm{id}_\mP: \mA \omin \mP
\to \mB \omin \mP$ is also a conditional expectation.

 In \cite{BGJ}, it was proved that for any irreducible
inclusion $\mB \subset \mA$ of simple unital $C^*$-algebras with a
conditional expectation $E: \mA \to \mB$ of finite Watatani index,
\[
\alpha(\mC \omin \mK(\ell^2), \mD \omin \mK(\ell^2)) = \alpha (\mC,
\mD)\] for every pair of intermediate $C^*$-subalgebras $\mC$ and
$\mD$ of the inclusion $\mB \subset \mA$. Interestingly, for unital
$C^*$-algebras, a more general stability result holds and the proof is
much simpler.

\begin{proposition}
Let $\mB \subset \mA$ be an inclusion of unital $C^*$-algebras with a
conditional expectation $E :\mA \to \mB$ of finite Watatani index and
$\mP$ be a fixed unital $C^*$-algebra. Then, the following hold:
\begin{enumerate}
\item The conditional expectation $E \omin \mathrm{id}_\mP: \mA \omin
  \mP \to \mB \omin \mP $ has finite Watatani index and $\Ind_W(E
  \omin \mathrm{id}_\mP) = \Ind_W(E) \otimes 1$.
\item If $\mC  \in
\mathrm{IMS}_W(\mB, \mA, E)$, then $\mC \omin \mP \in
\mathrm{IMS}_W(\mB \omin \mP , \mA \omin \mP, E \omin
\mathrm{id}_\mP)$.
\item If $\mC, \mD \in
\mathrm{IMS}_W(\mB, \mA, E)$, then \[
\alpha_{C^*} (\mC \omin \mP, \mD \omin \mP) = \alpha_{C^*}
(\mC,\mD).
\]
\end{enumerate}\end{proposition}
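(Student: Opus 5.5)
The plan is to reduce everything to finite quasi-bases, which is possible because all algebras involved are unital. By \Cref{original-W-index}, $E$ has a finite quasi-basis $\{u_i\}_{i=1}^n \subset \mA$. I would then check directly that $\{u_i \ot 1_\mP\}$ is a quasi-basis for $E \omin \mathrm{id}_\mP$. On elementary tensors $a \ot p$ we have
\[
\sum_i (u_i\ot 1)(E\omin\mathrm{id}_\mP)\big((u_i^*\ot 1)(a\ot p)\big) = \sum_i u_iE(u_i^*a)\ot p = a\ot p .
\]
Both sides are bounded linear in $x$, since $E \omin \mathrm{id}_\mP$ is a contraction. The identity therefore extends by linearity and continuity to all of $\mA\omin\mP$. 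Then \Cref{original-W-index}(1),(2) give item (1): the index is finite, and $\Ind_W(E\omin\mathrm{id}_\mP)=\sum_i u_iu_i^*\ot 1=\Ind_W(E)\ot 1$. Here $\mB\omin\mP$ is unital with the same unit, by \Cref{dual-ce-unital}.

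For item (2), let $F:\mA\to\mC$ be the compatible expectation. Then $F\omin\mathrm{id}_\mP$ is a conditional expectation onto $\mC\omin\mP$, which sits in $\mA\omin\mP$ via injectivity of the minimal tensor product. By the argument for item (1) applied to $F$, it has finite Watatani index. Compatibility
\[
(E\omin\mathrm{id}_\mP)_{\restriction_{\mC\omin\mP}}\circ(F\omin\mathrm{id}_\mP)=E\omin\mathrm{id}_\mP
\]
holds on elementary tensors, because $(E_{\restriction_\mC}\circ F)\ot\mathrm{id}=E\ot\mathrm{id}$. It then extends by continuity. Note also $(E\omin\mathrm{id})_{\restriction_{\mC\omin\mP}} = E_{\restriction_\mC}\omin\mathrm{id}_\mP$.

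For item (3), I would use \Cref{C-star-vs-W-star-angle} together with the quasi-basis formula displayed in its proof (from \cite[Prop. 3.3]{GS}). Let $\{\mu_j\}$ and $\{\delta_k\}$ be finite quasi-bases for $E_{\restriction_\mC}$ and $E_{\restriction_\mD}$. Then $\{\mu_j\ot1\}$ and $\{\delta_k\ot1\}$ are quasi-bases for the restrictions of $E\omin\mathrm{id}_\mP$, as in item (1). This gives
\[
\mathrm{Ind}_W\big((E\omin\mathrm{id})_{\restriction_{\mC\omin\mP}}\big)=\Ind_W(E_{\restriction_\mC})\ot1,
\]
and similarly for $\mD$. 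It also gives
\[
\sum_{j,k}(\mu_j\ot1)(E\omin\mathrm{id})\big((\mu_j^*\ot1)(\delta_k\ot1)\big)(\delta_k^*\ot1)=\Big(\sum_{j,k}\mu_jE(\mu_j^*\delta_k)\delta_k^*\Big)\ot1 .
\]
Note that $\Ind_W(E)^{-1}\ot 1 = (\Ind_W(E)\ot1)^{-1}$. Each numerator and denominator term in the cosine formula for the tensored inclusion is therefore $z\ot 1_\mP$, where $z$ is the corresponding term for the original inclusion. Since $\|z\ot1_\mP\|_{\min}=\|z\|$, the two cosines coincide. Both angles lie in $[0,\pi/2]$, so the angles are equal.

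The only point I expect to need care is that the unital definition of $\alpha_{C^*}$ agrees with the quasi-basis expression for the tensored inclusion as well. This is exactly \Cref{C-star-vs-W-star-angle} (or \cite[Prop. 3.3]{GS}), applied to the unital inclusion $\mB\omin\mP\subset\mA\omin\mP$, which is legitimate by items (1) and (2). Everything else is routine density and continuity.
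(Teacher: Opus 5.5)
Your proposal is correct and follows essentially the same route as the paper. Both arguments use the finite quasi-bases $\{\mu_j\otimes 1\}$ and $\{\delta_k\otimes 1\}$ to show that every term in the cosine expression for the tensored inclusion has the form $z\otimes 1_\mP$. The only difference is in item (3). The paper recomputes $(E\omin\mathrm{id}_\mP)_1(e_{\mC\omin\mP}e_{\mD\omin\mP})$ and $\|e_{\mC\omin\mP}-e_{\mB\omin\mP}\|$ directly from $e_{\mC\omin\mP}=\sum_j(\mu_j\otimes 1)e_{\mB\omin\mP}(\mu_j^*\otimes 1)$, whereas you apply the quasi-basis formula of \cite[Prop. 3.3]{GS} to the tensored inclusion as well. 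You also supply the routine details for (1) and (2) that the paper calls immediate.
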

\begin{proof}
  (1) and (2) follow  immediately from the definitions.\smallskip

  (3): Let $(E\otimes \mathrm{id}_\mP)_1: (\mA \omin
\mP)_1 \to \mA \omin \mP$ denote the dual of the conditional
expectation $E \omin \mP$.  Then, $ (E \otimes \mathrm{id}_\mP)_1
(e_{\mB \omin \mP}) = \mathrm{Ind}_W (E\omin 1)^{-1} = \mathrm{Ind}_W
(E)^{-1} \ot 1 $.

Let $\{\mu_j\}$ and $\{\delta_k\}$ be finite
quasi-bases for $E_{\restriction_\mC}$ and $E_{\restriction_\mD}$,
respectively. Since $(E \omin \mathrm{id}_\mP)_{\restriction_{\mC
    \omin \mP}} = (E_{\restriction_{\mC}} \omin \mathrm{id}_\mP)$, it
follows that $\{\mu_j \otimes 1\}$ is a (finite) quasi-basis for $ (E
\omin \mathrm{id}_\mP)_{\restriction_{\mC \omin \mP}}$. Likewise,
$\{\delta_k\otimes 1\}$ is a (finite) quasi-basis for $ (E \omin
\mathrm{id}_\mP)_{\restriction_{\mD \omin \mP}}$.  Thus, it follows
from \cite[Prop. 2.7]{GS} that
\[
e_{\mC \omin \mP} = \sum_j (\mu_j \otimes 1) e_{\mB \omin
  \mP} (\mu_j^{*}\otimes 1) \text{ and }
e_{\mD
  \omin \mP} = \sum_k (\delta_k\otimes 1) e_{\mB \omin \mP}
(\delta_k^{*}\otimes 1); \text{ so,}
\]
 \begin{eqnarray*} \lefteqn{(E \omin \mathrm{id}_\mP)_1
    (e_{\mC\omin \mP} e_{\mD \omin \mP})}\\ & = & (E \omin
  \mathrm{id}_\mP)_1 \left( \sum_{j,k} (\mu_j \otimes 1) e_{\mB \omin
    \mP} (\mu_j^{*}\delta_k\otimes 1)e_{\mB \omin \mP}
  (\delta_k^{*}\otimes 1) \right)\\ & = & (E \omin \mathrm{id}_\mP)_1
  \left( \sum_{j,k} (\mu_j \otimes 1) (E \omin \mathrm{id}_\mP)
  (\mu_j^{*}\delta_k\otimes 1)e_{\mB \omin \mP} (\delta_k^{*}\otimes
  1) \right)\\ & = & (E \omin \mathrm{id}_\mP)_1 \left( \sum_{j,k}
  (\mu_j \otimes 1) \left(E (\mu_j^{*}\delta_k) \otimes 1\right)e_{\mB
    \omin \mP} (\delta_k^{*}\otimes 1) \right)\\ & = & \Ind_W(E \omin
  \mathrm{id}_\mP)^{-1} \left( \sum_{j,k} (\mu_j \otimes 1) \left(E
  (\mu_j^{*}\delta_k) \otimes 1\right) (\delta_k^{*}\otimes 1) \right)
  \\ & = & \big( \Ind_W(E)^{-1} \ot 1 \big) \left( \sum_{j,k} \mu_j E
  (\mu_j^{*}\delta_k) \delta_k^* \otimes 1 \right),
 \end{eqnarray*}
 \begin{eqnarray*}
\lefteqn{ \left\| e_{\mC\omin \mP} - e_{\mB\omin \mP}\right\|_{(\mA
    \omin \mP)_1}}\\ & = & \left\| (E\omin \mathrm{id}_\mP)_1\big(
(e_{\mC\omin \mP}- e_{\mB\omin \mP})^* (e_{\mC\omin \mP}-e_{\mB\omin
  \mP})\big)\right\|^{1/2} \\&=& \left\| (E\omin
\mathrm{id}_\mP)_1(e_{\mC\omin \mP}- e_{\mB\omin \mP})\right\|^{1/2}
\\ &=& \Big\| \left(E\omin \mathrm{id}_\mP\right)_1\Big(\sum_{ j}
(\mu_j\otimes 1) e_{\mB\omin \mP}(\mu_j^*\otimes 1)-e_{\mB\omin \mP}
\Big)\Big\|^{1/2} \\ &=& \Big\| \sum_{ j}(\mu_j\otimes 1) (E\omin
\mathrm{id}_\mP)_1(e_{\mB \omin \mP}) (\mu_j^*\otimes 1) -(E\omin
\mathrm{id}_\mP)_1(e_{\mB\omin \mP})\Big\|^{1/2} \\ &=& \Big\| 
\sum_{ j} (\mu_j\otimes 1)(\mu_j^*\otimes
1)\big(\mathrm{Ind_W}(E)^{-1}\otimes 1  \big)
-\mathrm{Ind_W}(E)^{-1}\otimes 1\Big\|^{1/2} \\ &=& \Big\| \Big( \sum_{ j}
\mu_j \mu_j^* \ot 1 -1\ot 1\Big)\big(\mathrm{Ind_W}(E)^{-1}\otimes 1 \big) 
\|^{1/2}\\  & = & \|
\left(\mathrm{Ind_W}( E)^{-1} (\mathrm{Ind_W (E_{\restriction_{\mC}})}
- 1)\right)\otimes 1 \|^{1/2};
          \end{eqnarray*}
 and, likewise,
 \[
\| e_{\mD \omin
      \mP} - e_{\mB \omin \mP}\|_{(\mA \omin \mP)_1}
= \|
    \mathrm{Ind_W} (E)^{-1} \left(\mathrm{Ind_W (E_{\restriction_{\mD}})}-
1 \right)\otimes 1 \|^{1/2}.
 \]
Hence,    
   \begin{eqnarray*}
  \lefteqn{ \cos (\alpha_{C^*} ( \mC\omin \mP, \mD \omin \mP))} \\ & =
  & \frac{\| \langle e_{\mC\omin \mP}- e_{\mB\omin \mP}, e_{\mD \omin
      \mP}-e_{\mB \omin \mP} \rangle_{\mA\omin \mP}\|}{\| e_{\mC\omin
      \mP} - e_{\mB\omin \mP}\|_{\mA_1\omin \mP}\ \| e_{\mD \omin \mP}
    - e_{\mB\otimes P}\|_{\mA_1 \omin \mP} }\\ &= & \frac{\| (E \omin
    \mathrm{id}_\mP)_1 (e_{\mC\omin \mP} e_{\mD \omin \mP}) -
    (E\otimes \mathrm{id}_\mP)_1(e_{\mB \omin \mP}) \| }{\|
    e_{\mC\omin \mP} - e_{\mB\omin \mP}\|_{\mA_1 \omin \mP}\ \| e_{\mD
      \omin \mP} - e_{\mB\otimes \mP}\|_{\mA_1 \omin \mP}}\\&= &
  \frac{\| (\mathrm{Ind_W} (E)^{-1} \sum_{j,k}\mu_j E(\mu_j^*
    \delta_k)\delta_k^* )\otimes 1 - \mathrm{Ind_W} (E)^{-1}\otimes 1
    \|}{ \|
    \mathrm{Ind_W}( E)^{-1} \left(\mathrm{Ind_W (E_{\restriction_{\mC}})} -
    1\right)\otimes 1 \|^{1/2}\|
\|
    \mathrm{Ind_W}( E)^{-1} \left(\mathrm{Ind_W (E_{\restriction_{\mD}})} -
    1\right)\otimes 1 \|^{1/2}\|
} \\ &=
  & \frac{\| (\Ind_W(E)^{-1})(\sum_{j,k}\mu_j E(\mu_j^*
    \delta_k)\delta_k^* -1) \|}{\| (\Ind_W(E)^{-1})(\mathrm{Ind}_W
      (E_{\restriction_{\mC}}) -1) \|^{1/2}
    \|(\Ind_W(E)^{-1})(\mathrm{Ind}_W (E_{\restriction_{\mD}})- 1)\|^{1/2}}
  \\ &= & \cos(\alpha_{C^*}(\mC,\mD)),
   \end{eqnarray*}
where the last equality follows from  \cite[Proposition 3.3]{GS}.
\end{proof}
\noindent{\bf Question:} It will be interesting to see whether a
similar stability of the interior angle under minimal tensor product
holds for general non-unital $C^*$-algebras or not.\medskip

\noindent {\bf Acknowledgements:} The first named author would like to thank
  Keshab Chandra Bakshi and Biplab Pal for some  insightful
  discussions on \cite{izumi}.


\begin{thebibliography}{99}

\bibitem{BDH} M.~Baillet, Y.~Denizeau and J-F.~Havet, Indice d'une
  esp\'{e}rance conditionnelle, {\it Compositio Math.} {\bf 66} (1988),
  no.~2, 199–236.

\bibitem{BDLR} K.~C.~Bakshi, S.~Das, Z.~Lieu and Y.~Ren, An angle
  between intermediate subfactors and its rigidity,
  {\it Trans.~Amer.~Math.~Soc.} {\bf 371} (2019), no.~8, 5973-5991.
  
  \bibitem{BGJ} K.~C.~Bakshi, S.~Guin and D.~Jana, A few remarks on
    intermediate subalgebras of an inclusion of $C^*$-algebras,
    {\it Infin. Dimens. Anal. Quantum Probab. Relat. Top.} {\bf 28} (2025),
    no.~4, 2550006.

  \bibitem{BG1} K.~C.~Bakshi and V.~P.~Gupta, {\it Internat.~J.~Math.} {\bf
    30} (2019), no.~12, 1950061.

\bibitem{BG2} K.~C.~Bakshi and V.~P.~Gupta, Lattice of intermediate subalgebras,
  {\it J.~London Math.~Soc.} {\bf 104} (2021), no.~5, 2082-2127.

\bibitem{BG3} K.~C.~Bakshi and V.~P.~Gupta, Regular inclusions of
  simple unital $C^*$-algebras, {\it M\"{u}nster J.~of Math.} {\bf 18}
  (2025), 181-200.

\bibitem{BMO} J.~Bannon, A.~Marrakchi and N.~Ozawa, Full factors and
  co-amenable inclusions, {\it Comm.~Math.~Phys.}~{\bf 378} (2020),
  1107-1121.

\bibitem{blackadar} B.~Blackadar, {\it Operator algebras: Theory of
  $C^*$-algebras and von Neumann algebras}, Springer-Verlag Berlin
  Heidelberg, 2006.

\bibitem{BM} D.~P.~Blecher, and C.~ Le Merdy, {\it Operator algebras and
  their modules: An operator space approach}, London Mathematical
  Society Monographs, Series 30.
  
\bibitem{JSvnc} J.~Cameron and R.~R.~Smith, Bimodules in crossed
  products of von Neumann algebras, {\it Adv.~Math.} {\bf 274}
  (2015), 539-561.

\bibitem{JCRS} J.~Cameron and R.~R.~Smith, A Galois correspondence for
  reduced crossed products of simple $C^*$- algebras by discrete
  groups, {\it Canad.~J.~Math.} {\bf 71} (2019), no.~5, 1103-1125.
  
\bibitem{FK} M.~Frank, and E.~Kirchberg, On conditional expectations
  of finite index, {\it J. Operator Theory} {\bf 40} (1998), 87–111.
  
\bibitem{GS} V.~P.~Gupta and D.~Sharma, On possible values of the
  interior angle between intermediate subalgebras,
  {\it J.~Aust.~Math.~Soc.} {\bf 117} (2024), no.~1, 44-66.

\bibitem{Haag} U.~Haagerup, Operator valued weights in von Neumann
  algebras, {\it J.~Funct.~Anal.} {\bf 32} (1979), 175 - 206.

\bibitem{IW} S.~Ino and Y.~Watatani, Perturbations of intermediate
    $C^*$-subalgebras for simple $C^*$-algebras, {\it Bull. London
    Math. Soc.} {\bf 46} (2014), 469-480.

\bibitem{izumi} M.~Izumi, Inclusions of simple $C^*$-algebras, {\it J. Reine
  Angew Math.} {\bf 547} (2002), 97-138.

\bibitem{Jolissant} P.~Jolissant, Indice d'esperances conditionelles
  et algebres de von Neumann finies, {\it Math. Scand.} {\bf 68}
  (1991), 221 - 246.

\bibitem{jones} V.~F.~R.~Jones, Index for subfactors, {\it
  Invent.~Math.} {\bf 72} (1983), 1-25.

\bibitem{KPW} T.~Kajiwara, C.~Pinzari and Y.~Watatani, Jones index
  theory for Hilbert $C^*$-bimodules and its equivalence with
  conjugation theory, {\it J. Funct. Anal.} {\bf 215} (2004), 1–49.

\bibitem{Khos} M.~Khoshkam, Hilbert $C^*$-modules and conditional
  expectation on crossed products, {\it J.~Aust.~Math.~Soc. (Series A)} {\bf
    61} (1996), 106-118.

  \bibitem{Kosaki} H.~Kosaki, Extension of Jones' theory on index to
    arbitrary factors, {\it J.~Funct.~Anal.} {\bf 66} (1986) 123-140.

    \bibitem{Lance} E.~C.~Lance, {\it Hilbert $C^*$-modules: a toolkit
      for operator algebraists}, LMS LNS {\bf 210}, Cambridge
      University Press, 1995.

    \bibitem{Longo} R.~Longo, Conformal subnets and intermediate
      subfators, {\it Commun. Math. Phys.} {\bf 237} (2003), 7-30.
      
    \bibitem{mercer} R.~Mercer, Convergence of Fourier series in
      discrete crossed products of von Neumann algebras, {\it
        Proc. Amer. Math. Soc.} {\bf 94} (1985), 254 - 258.

    \bibitem{Pas} W.~L.~Paschke, Inner product modules over
    $B^*$-algebras, {\it Trans.~Amer.~Math.~Soc.} {\bf 182} (1973), 443–468.

    \bibitem{Ped} G.~K.~Pedersen, {\it $C^*$-algebras and their
      automorphism groups}, Academic Press, 1979.

    \bibitem{popa} S.~Popa, Classification of subfactors and their
      endomorphisms, {\it CBMS Regional Conference series in
      Mathematics} {\bf 86}, Americal Mathematical Society, 1995.


\bibitem{Rieffel} M.~A.~Rieffel, Morita equivalence for
  $C^*$-algebras and $W^*$-algebras, {\it J.~Pure Appl. Algebra} {\bf 5}
  (1974), 51-96.
  

\bibitem{STR} \c{S}.~Str\u{a}til\u{a}, {\it Modular theory in operator
  algebras}, Editura Academiei  Bucare\c{s}ti and Abacus Press
  Tunbridge Wells, 1981.

\bibitem{takesaki} M.~Takesaki, {\it Theory of Operator Algebras I},
  Springer-Verlag, 1979.

\bibitem{watatani} Y.~Watatani, Index for $C^*$-subalgebras, {\it Mem. Amer. Math. Soc.} {\bf 83}, 1990.

\end{thebibliography}
\end{document}